\documentclass[final,onefignum,onetabnum]{siamart190516}




\usepackage{amsfonts, mathtools, amssymb, amsopn, bm}
\usepackage{mathrsfs}
\usepackage{algorithmic}
\usepackage{booktabs, siunitx}
\usepackage{tikz, pgfplots, subfig, graphicx}
\pgfplotsset{compat=1.13}

\usepackage{todonotes}
\usepackage{epstopdf}




\DeclareMathOperator*{\argmin}{arg\,min}
\DeclareMathOperator*{\argmax}{arg\,max}
\DeclareMathOperator*{\supp}{supp}

\newcommand{\fun}[3]{#1 \colon #2 \rightarrow #3}
\newcommand{\abs}[1]{\left|#1\right|}
\newcommand{\norm}[2]{\left\| #1 \right\|_{#2}}


\renewcommand{\subset}{\subseteq}
\renewcommand{\epsilon}{\varepsilon}
\renewcommand{\b}{\bm}
\renewcommand{\cref}{\Cref}


\newcommand{\eps}{\mathrm\varepsilon}
\newcommand{\e}{\mathrm e}
\renewcommand{\i}{\mathrm i}

\newcommand{\C}{\mathbb C}
\newcommand{\N}{\mathbb N}
\newcommand{\R}{\mathbb R}
\newcommand{\Z}{\mathbb Z}
\newcommand{\T}{\mathbb T}
\renewcommand{\P}{\mathbb P}

\newcommand{\Td}{\T^d}

\newcommand{\x}{\b x}
\newcommand{\y}{\b y}
\renewcommand{\k}{\b k}
\newcommand{\h}{\b h}
\newcommand{\f}{\b f}
\newcommand{\F}{\b F}


\renewcommand{\L}{\mathrm{L}}
\newcommand{\Lt}{\L_2}
\newcommand{\sobolev}[1]{\mathrm{H}^{#1}}
\newcommand{\wiener}[1]{\mathcal{A}^{#1}}


\newcommand{\D}{\mathcal D}
\renewcommand{\u}{\b u}
\newcommand{\uc}{\b{u}^{\mathrm c}}
\newcommand{\au}{\abs{\u}}
\newcommand{\auc}{d-\abs{\u}}
\renewcommand{\v}{\bm v}
\newcommand{\vc}{\bm{v}^{\mathrm c}}
\newcommand{\av}{\abs{\v}}

\newcommand{\AT}[1]{\mathrm{T}_{#1}}
\newcommand{\Tds}{\AT{d_s}}

\newcommand{\Pset}[2]{\P_{#1}^{(#2)}}
\newcommand{\Pud}{\Pset{\u}{d}}

\newcommand{\Fset}[2]{\mathbb{F}_{#1}^{(#2)}}
\newcommand{\Fud}{\Fset{\u}{d}}

\newcommand{\va}[1]{\sigma^2(#1)} 
\newcommand{\gsi}[2]{\varrho(#1,#2)}


\newcommand{\Fseries}[4]{\sum_{#1 \in #2} #3 \, \e^{2\pi\i #1 \cdot #4}}

\newcommand{\fc}[2]{\mathrm{c}_{#1}\!\left(#2\right)}
\newcommand{\bfh}{\hat{\f}}

\newcommand{\dsuper}{\mathrm{d}^{(\mathrm{sp})}}
\newcommand{\scal}[2]{\bm #1 \cdot \bm #2}
\newcommand{\scall}[2]{#1 \cdot #2}
\newcommand{\Fser}{\Fseries}
\newcommand{\aw}{\wiener{w}}
\newcommand{\st}{\sobolev{w}}
\renewcommand{\d}{\mathrm d}
\newsiamremark{remark}{Remark}
\newsiamremark{hypothesis}{Hypothesis}
\crefname{hypothesis}{Hypothesis}{Hypotheses}
\newsiamthm{claim}{Claim}

\title{Approximation of high-dimensional periodic functions with Fourier-based methods}

\author{Daniel Potts\thanks{Chemnitz University of Technology, Germany 
		(\email{potts@math.tu-chemnitz.de}, \url{http://www.tu-chemnitz.de/\string~potts/}).}
	\and Michael Schmischke\thanks{Chemnitz University of Technology, Germany 
		(\email{michael.schmischke@math.tu-chemnitz.de}, \url{http://www.tu-chemnitz.de/\string~mischmi/}).}
}

\usepackage{framed}
\colorlet{shadecolor}{blue!20}

\headers{Approximation of high-dimensional periodic functions}{D. Potts, and M. Schmischke}

\ifpdf
\hypersetup{
  pdftitle={Approximation of high-dimensional periodic functions with Fourier-based methods},
  pdfauthor={D. Potts, and M. Schmischke}
}
\fi



\begin{document}

\maketitle

\begin{abstract}
	In this paper we propose an approximation method for high-dimensional $1$-periodic functions based on the multivariate ANOVA decomposition. We provide an analysis on the classical ANOVA decomposition on the torus and prove some important properties such as the inheritance of smoothness for Sobolev type spaces and the weighted Wiener algebra. We exploit special kinds of sparsity in the ANOVA decomposition with the aim to approximate a function in a scattered data or black-box approximation scenario. This method allows us to simultaneously achieve an importance ranking on dimensions and dimension interactions which is referred to as attribute ranking in some applications. In scattered data approximation we rely on a special algorithm based on the non-equispaced fast Fourier transform (or NFFT) for fast multiplication with arising Fourier matrices. For black-box approximation we choose the well-known rank-1 lattices as sampling schemes and show properties of the appearing special lattices.
\end{abstract}

\begin{keywords}
  ANOVA decomposition, high-dimensional approximation, Fourier approximation
\end{keywords}

\begin{AMS}
	65T, 42B05
\end{AMS}

\section{Introduction}

The approximation of high-dimensional functions is an important and current topic with great interest in many applications. We consider a setting of periodic functions $\fun{f}{\Td}{\C}$, $d \in \N$, over the torus $\T$ where certain data about the function is known. Here, we distinguish between a black-box setting, i.e., $f$ can be evaluated at points $\x \in \T^d$ at a certain cost, and a scattered data setting, i.e., sampling points $X \subset \T^d$ and function values $(f(\x))_{\x\in X}$ are given. Besides the natural question of wanting to find an approximation for $f$, we want to consider the question of interpretability, i.e., analyzing the importance of the dimensions and dimension interactions of the function. In applications this is sometimes referred to as an \textit{attribute ranking}.

The main tool to achieve our goals is the \textbf{analysis of variance (ANOVA)} decomposition \cite{CaMoOw97, RaAl99, LiOw06, Holtz11} which is an important model in the analysis of dimension interactions of multivariate, high-dimensional functions. It has proved useful in understanding the reason behind the success of certain quadrature methods for high-dimensional integration \cite{Ni92, BuGr04,GrHo10} and also infinite-dimensional integration \cite{BaGn14, GrKuSl16, KuNuPlSlWa17}. The ANOVA decomposition decomposes a $d$-variate function in $2^d$ ANOVA terms where each term belongs to a subset of $\D \coloneqq \{1,2,\dots,d\}$. The single term depends only on the variables in the corresponding subset and the number of these variables is the order of the ANOVA term. In this paper we study the classical ANOVA decomposition for periodic functions and how it acts on the frequency domain. The decomposition is referred to as \textit{classical} since it is based on an integral projection operator. In this setting we find relationships between ANOVA terms and the support of the frequencies as subsets of $\Z^d$. Moreover, we prove formulas for the representation of ANOVA terms and projections.     

Classical approximation methods cannot be applied for high-dimensional functions in general since the data required increases exponentially because of the curse of dimensionality. However, the observation has been made that in practical applications with multivariate, high-dimensional functions, often only the ANOVA terms of a low order are enough to describe a function, see e.g.~\cite{CaMoOw97}. This leads to the notion of a superposition dimension $d_s \in \D$ that limits the order of the ANOVA terms involved. Using this as a sparsity assumption to circumvent the curse of dimensionality, we consider functions where the ANOVA decomposition is mostly supported on terms of a low order, i.e., the norm of the remaining decomposition weighed by the norm of $f$ is small. This leads to a truncation of the decomposition through a superposition threshold. We consider how the previously described error can be related to the decay of Fourier coefficients and specifically the smoothness of $f$.

We present and analyze an approximation method that uses sensitivity analysis, cf.\ \cite{So90, So01, LiOw06}, on the truncated ANOVA decomposition which is able to identify important ANOVA terms and incorporate this information in finding an approximation. The goal is to simplify the approximation model which yields benefits in reducing the influence of overfitting regarding the amount of data. We determine approximations of the Fourier coefficients of the function (or \textit{learn} them) by solving least-squares problems. This is done through exploiting the special structure of the system matrix by identifying submatrices with the corresponding ANOVA terms. In the case of black-box approximations we are using rank-1 lattices as a spatial discretization, see e.g.~\cite{Kae2012,Kae2013,KaPoVo13,KaPoVo14}, and for scattered data approximation we rely on the iterative LSQR method \cite{PaSa82} and the fast matrix-vector multiplications for Fourier matrices provided by the non-equispaced fast Fourier transform (or NFFT) introduced in \cite{KeKuPo09}.

The paper is organized as follows. In \Cref{sec:anova} we introduce the classical ANOVA decomposition and study its behavior for periodic functions with regard to the Fourier system. We prove new formulas for the Fourier coefficients of projections in \cref{lemma:anova:projections} and ANOVA terms in \cref{lemma:anova:terms}. Moreover, we prove that functions in Sobolev type spaces and the weighted Wiener algebra inherit their smoothness to the ANOVA terms, see \cref{thm:inheritance1} and \cref{thm:inheritance2}. In \Cref{sec:trunc} we consider the truncated ANOVA decomposition and prove formulas for their Fourier coefficients, see \Cref{lemma:anova:trunc_coeff} and \cref{cor:trunc_coeff:ds}. We also give direct formulas for the truncated decomposition using the projections in \Cref{lemma:anova:direct_trunc} and \Cref{cor:trunc:direct_sup}. Furthermore, we relate Sobolev type spaces and the weighted Wiener algebra to the previously introduced functions of low-dimensional structure and compute the errors in \cref{theorem:anova:l2_error} and \cref{theorem:anova:inf_error_1}. Specifically, we consider a class of product and order-dependent weights, see \cite{KuSchwSl12, GrKuNi14, KuNu16, GrKuNu18}, of functions with isotropic and dominating-mixed smoothness, cf.\ \cite{GriHa13, KaPoVo13, ByKaUlVo16}, to obtain specific error bounds, see \cref{cor:bound:1} and \cref{cor:bound:2}. In \Cref{sec:anova:approxmethod} we present an approximation method for functions that are of an low-dimensional structure, cf.~\cref{alg}. We start by considering a black-box approximation scenario with rank-1 lattices as sampling schemes and show properties of the arising lattices in \cref{lem:approx:diff}, \cref{lem:approx:IUcard}, and \cref{cor:approx:diff_card}. Furthermore, we discuss scattered data approximation in \cref{sec:approx:scattered}. The arising approximation errors are considered in \cref{sec:anova:error} with main results being \Cref{thm:av_trunc_error:1}, \Cref{thm:av_trunc_error:2}, \cref{thm:anova:error:bb}, and \cref{thm:error:approx}. In \cref{sec:numeric} we perform numerical experiments with a specific benchmark function.

\section{Prerequisites and Notation}

We consider multivariate 1-periodic functions $\fun{f}{\T^d}{\C}$ with spatial dimension $d \in \N$, which are square-integrable, i.e., elements of $\Lt(\Td)$. Those functions have a unique representation with regard to the Fourier system $\{ \e^{2\pi\i\k\cdot\x} \}_{\bm k \in \Z^d}$ as Fourier series
\begin{equation*}
	f(\bm x) = \sum_{\bm k \in \Z^d} \fc{\k}{f} \, \e^{2\pi\i\scal{k}{x}},
\end{equation*}
where $\fc{\k}{f} \coloneqq \int_{\Td} f(\bm x)\,\e^{-2\pi\i\scal{k}{x}}\mathrm d\bm x \in \C$, $\bm k \in \Z^d$, are the Fourier coefficients of $f$. Given a finite index set $I \subset \Z^d$, we call the trigonometric polynomial
\begin{equation}\label{eq:pre:fps}
	S_I f (\x) = \sum_{\k\in I} \fc{\k}{f} \, \e^{2\pi\i\scal{k}{x}}
\end{equation}
the Fourier partial sum of $f$ with respect to the index set $I$.

In this paper we make use of indexing with sets. First, for a given spatial dimension $d$ we denote with $\D = \{ 1,2,\dots,d \}$ the set of coordinate indices and subsets as bold small letters, e.g., $\u \subset \D$. The complement of those subsets are always with respect to $\D$, i.e., $\uc = \D \setminus \u$. For a vector $\x \in \C^d$ we define $\x_{\u} = ( x_i )_{i \in \u} \in \C^{\au}$. There remains a small ambiguity regarding the order of the components of $\x_{\u}$ which can be clarified if one chooses a consistent ordering, e.g., ascending order which would be a natural choice.

Furthermore, we use the $p$-norm which is defined as
\begin{equation*}
	\norm{\x}{p} = \begin{cases}
		\abs{ \{ i \in \D \colon x_i \neq 0 \} } \quad&\colon p = 0 \\
		\left( \sum_{i=1}^d \abs{x_i}^p \right)^{1/p} &\colon 1 \leq p < \infty \\
		\max_{\i\in\D} \abs{x_i} &\colon p = \infty
	\end{cases}
\end{equation*}
for $\x\in\R^d$. Note that the case $1 \leq p < \infty$ can be expanded to $0 < p < 1$, but then $\norm{\cdot}{p}$ would only be a quasi-norm. In the case $p = 0$, $\norm{\cdot}{p}$ is not a norm at all.

\subsection{Rank-1 lattice}\label{sec:pre:lattice}

In the case of black-box approximation we are going to rely on rank-1 lattice as sampling schemes, see e.g.~\cite{Kae2012,Kae2013,KaPoVo13,KaPoVo14}. For a given lattice size $M \in \N$ and a generating vector $\b z \in \Z^d$ we define a rank-1 lattice \begin{equation*}
    \Lambda( \b z, M ) \coloneqq \left\{ \x_j \coloneqq \frac{j}{M}\b z \mod \b 1 \colon j = 0,1,\dots,M-1 \right\}.
\end{equation*} These lattices are useful in the evaluation of trigonometric polynomials $$p \in \Pi_I \coloneqq \mathrm{span} \left\{ \e^{2\pi\i\k\cdot\circ} \colon \k \in I \right\}$$ over a finite index set $I \subset \Z^d$ for given Fourier coefficients $\fc{\k}{p}$. As discussed in \cite{LiHi03}, we have \begin{equation*}
    p(\x_j) = p\left(\frac{j}{M}\b z \mod \b 1\right) = \sum_{l=0}^{M-1} \left( \sum_{\substack{\k\in I \\ \k\cdot\b z \equiv l \mod M}} \fc{\k}{p} \right) \e^{2\pi\i\frac{j l}{M}}.
\end{equation*} The computation of the sum over $l$ can be realized trough a one-dimensional FFT and therefore the evaluation of $p$ at all lattice nodes can be done using only this single FFT. The arithmetic complexity of this evaluation is in $\mathcal{O}(M\log M + d \abs{I})$.

However, using a special kind of rank-1 lattice, it is possible to reconstruct the Fourier coefficients $\fc{\k}{p}$ by sampling $p$ at the nodes in $\Lambda(\b z,M)$ in an exact and stable way. For an index set $I \subset \Z^d$ we define a reconstructing rank-1 lattice $\Lambda(\b z,M,I)$ as a rank-1 lattice $\Lambda(\b z,M)$ such that the condition \begin{equation*}
    \b m \cdot \b z \not\equiv 0 \mod M \quad\forall \b m \in \D(I)\setminus\{\b 0\}
\end{equation*} is fulfilled with \begin{equation}\label{eq:pre:di}
    \D(I) \coloneqq \left\{ \k - \b h \colon \k,\b h \in I \right\}
\end{equation} being the difference set for $I$. Using the nodes of a reconstructing rank-1 lattice $\Lambda(\b z,M,I)$, the Fourier coefficients can be calculated as \begin{equation*}
    \fc{\k}{p} = \frac{1}{M} \sum_{j=0}^M p\left(\frac{j}{M}\b z \mod \b 1\right) \e^{-2\pi\i j\frac{\k \cdot \b z}{M}}.
\end{equation*} The calculation of all Fourier coefficients $\fc{\k}{p}$, $\k \in I$, can then be realized trough a one-dimensional FFT and the computation of the products $\k \cdot \b z$. Consequently, the arithmetic complexity of this evaluation is again in $\mathcal{O}(M\log M + d \abs{I})$.

The principle of this reconstruction can be generalized to functions $f \in \aw(\T^d) \coloneqq \{ f \in \mathrm{L}_1(\Td) \colon \norm{f}{\aw(\T^d)} \coloneqq \sum_{\bm k \in \Z^d} w(\k) \abs{\fc{\k}{f}} < \infty \} $, $\fun{w}{\Z^d}{[1,\infty)}$, by taking the Fourier partial sum $S_I f$ for a suitable index set $I \subset \Z^d$ and treating the evaluations of $f$ as the evaluations of the trigonometric polynomial $S_I f$. Using the same idea as before, we get \begin{equation*}
    \fc{\k}{f} \approx \hat{f}_{\k} \coloneqq \frac{1}{M} \sum_{j=0}^M f\left(\frac{j}{M}\b z \mod \b 1\right) \e^{-2\pi\i j\frac{\k \cdot \b z}{M}}
\end{equation*} with a a reconstructing rank-1 lattice $\Lambda(\b z, M, I)$. The error for each coefficient is \begin{equation}\label{eq:pre:error}
    \hat{f}_{\k} = \fc{\k}{f} + \sum_{\h \in \Lambda^\bot(\b z, M)\setminus\{\b 0\}} \fc{\k + \h}{f}
\end{equation} with the integer dual lattice \begin{equation*}
    \Lambda^\bot(\b z, M) \coloneqq \left\{ \k \in \Z^d \colon \k\cdot\b z \equiv 0 \mod M \right\}.
\end{equation*} Consequently, if $\sum_{\h \in \Lambda^\bot(\b z, M)\setminus\{\b 0\}} \abs{\fc{\k + \h}{f}}$ is small, the approximations $\hat{f}_{\k}$ are close to the Fourier coefficients $\fc{\k}{f}$. For further details on this topic we refer to \cite{KaPoVo13,KaPoVo14} and \cite[Chapter 8]{PlPoStTa18}.

\section{The classical ANOVA decomposition of 1-periodic functions}\label{sec:anova}

In this section we introduce the ANOVA decomposition, see e.g.\ \cite{CaMoOw97, LiOw06, Holtz11}, and derive new results for the periodic setting specifically with regard to the decomposition acting on the frequency domain.

We start by defining the projection operator
\begin{equation}\label{eq:anova:projection_operator}
\mathrm{P}_{\u} f ( \bm{x}_{\u} ) \coloneqq \int_{\T^{\auc}} f(\bm x) \d\bm{x}_{\uc}
\end{equation}
that integrates over the variables $\bm{x}_{\uc}$. For $\au > 0$ this operator maps a function from $\Lt(\Td)$ to $\mathrm{L}_2(\T^{\au})$ by the Cauchy-Schwarz inequality and the image $\mathrm{P}_{\u} f$ depends only on the variables $\bm{x}_{\u} \in \T^{\au}$. In the case of $\u = \emptyset$, the projection gives us the integral of $f$. We define the index set
\begin{equation}\label{eq:anova:pud}
\Pud \coloneqq \left\{ \bm k \in \Z^d \colon \bm{k}_{\uc} = \bm 0 \right\}
\end{equation}
which can be identified with $\Z^{\au}$ using the mapping $\bm k \mapsto \bm{k}_{\u}$. Note that we use the convention $\Z^{\abs{\emptyset}} = \{ 0 \}$. We now prove a relationship between the Fourier coefficients of $\mathrm{P}_{\u} f$ and $f$.
\begin{lemma}\label{lemma:anova:projections}
	Let $f \in \Lt(\T^d)$ and $\bm\ell \in \Z^{\au}$. Then
	\begin{equation*}
	\fc{\b\ell}{\mathrm{P}_{\u} f} = \fc{\k}{f}
	\end{equation*}
	for $\bm k \in \Z^d$ with $\bm{k}_{\u} = \bm\ell$ and $\bm{k}_{\uc} = \bm 0$.
\end{lemma}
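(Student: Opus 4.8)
The plan is to compute the Fourier coefficient of $\mathrm{P}_{\u}f$ directly from the definitions and reduce it to an integral of $f$ over the full torus. Fix $\b\ell \in \Z^{\au}$ and let $\k \in \Z^d$ be the frequency with $\k_{\u} = \b\ell$, $\k_{\uc} = \b 0$. By definition of the Fourier coefficient on $\T^{\au}$ and of the projection operator \eqref{eq:anova:projection_operator},
\begin{equation*}
	\fc{\b\ell}{\mathrm{P}_{\u}f} = \int_{\T^{\au}} \left( \int_{\T^{\auc}} f(\x)\,\d\x_{\uc} \right) \e^{-2\pi\i\scall{\b\ell}{\x_{\u}}}\,\d\x_{\u}.
\end{equation*}
First I would note that $f \in \Lt(\Td) \subset \mathrm{L}_1(\Td)$ since the torus has finite measure, so Fubini's theorem applies and the iterated integral equals $\int_{\Td} f(\x)\,\e^{-2\pi\i\scall{\b\ell}{\x_{\u}}}\,\d\x$, where the exponential factor (which does not depend on $\x_{\uc}$) can be pulled inside.

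The key observation is then that, because $\k_{\uc} = \b 0$ and $\k_{\u} = \b\ell$, we have $\scal{k}{x} = \scall{\k_{\u}}{\x_{\u}} + \scall{\k_{\uc}}{\x_{\uc}} = \scall{\b\ell}{\x_{\u}}$ for every $\x \in \Td$, so $\e^{-2\pi\i\scall{\b\ell}{\x_{\u}}} = \e^{-2\pi\i\scal{k}{x}}$. Substituting this into the integral above yields exactly $\int_{\Td} f(\x)\,\e^{-2\pi\i\scal{k}{x}}\,\d\x = \fc{\k}{f}$, which is the claim. For the degenerate case $\u = \emptyset$ one uses the convention $\Z^{\abs{\emptyset}} = \{0\}$, so $\b\ell = 0$, $\mathrm{P}_{\emptyset}f = \int_{\Td} f(\x)\,\d\x$ is a constant, and its single Fourier coefficient equals $\fc{\b 0}{f}$, consistent with the formula.

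I do not expect any real obstacle here; the only points requiring care are the justification of Fubini (handled by $\Lt(\Td)\subset\mathrm{L}_1(\Td)$ on the finite-measure torus) and the bookkeeping for how $\x_{\u}$ and $\x_{\uc}$ split the inner product, which is precisely where the hypothesis $\k_{\uc} = \b 0$ enters. A remark worth including is that this lemma immediately identifies the Fourier support of $\mathrm{P}_{\u}f$, viewed as a function on $\T^d$ constant in $\x_{\uc}$, with the index set $\Pud$ from \eqref{eq:anova:pud}, which is the form in which the result will be used later.
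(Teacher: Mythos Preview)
Your proof is correct and follows essentially the same approach as the paper: write out the iterated integral, merge it into a single integral over $\Td$, and use $\k_{\uc}=\b 0$ to replace $\scall{\b\ell}{\x_{\u}}$ by $\scal{k}{x}$. Your version is slightly more careful (explicit Fubini justification and the $\u=\emptyset$ case), but the argument is the same.
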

\begin{proof}
	We consolidate the two integrals and derive 
	\begin{align*}
	\fc{\b\ell}{\mathrm{P}_{\u} f} &= \int_{\T^{\au}} \int_{\T^{\auc}} f(\bm x) \d\bm{x}_{\uc}\,\e^{-2\pi\i\scall{\bm \ell}{\bm{x}_{\bm u}}}\d\bm{x}_{\u} \\
	&= \int_{\Td} f(\bm x)\,\e^{-2\pi\i\scall{\bm \ell}{\bm{x}_{\bm u}}} \d\bm{x} \\
	&= \int_{\Td} f(\bm x)\,\e^{-2\pi\i\scall{\bm k}{\bm{x}}} \d\bm{x} = \fc{\k}{f}.
	\end{align*}
\end{proof} Using \cref{lemma:anova:projections}, we are able to write $\mathrm{P}_{\u} f$ as both, a $d$-dimensional Fourier series $\mathrm{P}_{\u} f (\bm x) = \sum_{\bm k \in \Pud} \fc{\k}{f} \, \e^{2\pi\i\scal{k}{x}}$ and a $\au$-dimensional Fourier series $\mathrm{P}_{\u} f (\bm{x}_{\u}) = \sum_{\bm \ell \in \Z^{\au}} \fc{\ell}{\mathrm{P}_{\u} f} \, \e^{2\pi\i\scall{\bm\ell}{\bm{x}_{\u}}}$.

Now, we recursively define the \textbf{ANOVA term} for $\u \subseteq \D$
\begin{equation}\label{eq:anova:term}
f_{\u} \coloneqq \mathrm{P}_{\u} f - \sum_{\bm v \subsetneq \bm u } f_{\v}.
\end{equation} There exists a direct formula for the ANOVA terms $f_{\u}$ defined in \eqref{eq:anova:term}. 

\begin{lemma}\label{sup:lem:help}
	Let $a \in \N_{0}$ and $b \in \N$ with $b > a$. Then
	\begin{equation*}
		\sum_{n=a}^{b-1} (-1)^{n-a+1} \binom{b-a}{n-a} = (-1)^{b-a}.
	\end{equation*}
\end{lemma}

\begin{proof}
	We prove an equivalent form obtained through multiplication with $(-1)^a$ and an index shift
	\begin{equation*}
		\sum_{n=0}^{b-a-1} (-1)^{n+a+1} \binom{b-a}{n} = (-1)^{b}.
	\end{equation*}
	Splitting the sum and applying the Binomial theorem yields
	\begin{align*}
		\sum_{n=0}^{b-a-1} (-1)^{n+a+1} \binom{b-a}{n} &= \sum_{n=0}^{b-a} (-1)^{n+a+1} \binom{b-a}{n} - (-1)^{b+1} \\
		&= (-1)^{a+1} \underbrace{\sum_{n=0}^{b-a}(-1)^{n} \binom{b-a}{n}}_{=(-1+1)^{b-a}=0} + (-1)^{b} \\
		&= (-1)^b.
	\end{align*}
\end{proof}

\begin{lemma}\label{lemma:anova:direct}
	Let $f \in \Lt(\T^d)$ with $\u \subseteq \D$. Then
	\begin{equation}\label{eq:ANOVA:direct}
	f_{\u} = \sum_{\v \subset \u} (-1)^{\au-\av} \mathrm{P}_{\v} f.
	\end{equation}
\end{lemma}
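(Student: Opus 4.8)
The plan is to prove \eqref{eq:ANOVA:direct} by induction on the cardinality $\au$, unfolding the recursive definition \eqref{eq:anova:term} and collapsing the resulting nested subset sums with the alternating binomial identity of \cref{sup:lem:help}. For the base case $\u = \emptyset$ the recursion reads $f_\emptyset = \mathrm{P}_\emptyset f$, while the sum on the right-hand side of \eqref{eq:ANOVA:direct} consists of the single summand $(-1)^{0}\,\mathrm{P}_\emptyset f$, so both sides agree.

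For the inductive step I would fix $\u$ with $\au \geq 1$ and assume \eqref{eq:ANOVA:direct} for every $\v \subsetneq \u$. Substituting this into \eqref{eq:anova:term} gives
\[
  f_{\u} = \mathrm{P}_{\u} f - \sum_{\v \subsetneq \u}\ \sum_{\b w \subseteq \v} (-1)^{\av - \abs{\b w}}\, \mathrm{P}_{\b w} f .
\]
The key move is then to interchange the order of summation. Every $\b w$ that occurs satisfies $\b w \subseteq \v \subsetneq \u$ and is therefore a \emph{proper} subset of $\u$; for such a fixed $\b w$ the coefficient of $\mathrm{P}_{\b w} f$ is $\sum_{\b w \subseteq \v \subsetneq \u} (-1)^{\av - \abs{\b w}}$. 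Grouping these $\v$ according to their cardinality and using $\au - \abs{\b w} \geq 1$, this coefficient becomes the alternating binomial sum $\sum_{n=\abs{\b w}}^{\au-1} (-1)^{n-\abs{\b w}}\binom{\au-\abs{\b w}}{n-\abs{\b w}}$, which by \cref{sup:lem:help} (after multiplying the stated identity by $-1$) equals $(-1)^{\au-\abs{\b w}+1}$. Feeding this back,
\[
  f_{\u} = \mathrm{P}_{\u} f + \sum_{\b w \subsetneq \u} (-1)^{\au - \abs{\b w}}\, \mathrm{P}_{\b w} f = \sum_{\b w \subseteq \u} (-1)^{\au - \abs{\b w}}\, \mathrm{P}_{\b w} f ,
\]
where the term $\b w = \u$ has coefficient $(-1)^{0}=1$ and just reproduces $\mathrm{P}_{\u} f$. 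This is exactly \eqref{eq:ANOVA:direct}, which closes the induction.

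I expect the only genuinely fiddly point to be this reindexing: one must verify that after unfolding the inductive hypothesis every index $\b w$ is a proper subset of $\u$ (so that $b > a$ in \cref{sup:lem:help}), and keep the sign straight in passing from $\sum_n (-1)^{n-\abs{\b w}+1}\binom{\cdot}{\cdot}$ to $\sum_n (-1)^{n-\abs{\b w}}\binom{\cdot}{\cdot}$. An essentially equivalent alternative would be to note that \eqref{eq:anova:term} and \eqref{eq:ANOVA:direct} form a Möbius inversion pair on the Boolean lattice of subsets of $\D$, whose Möbius function is $\mu(\v,\u)=(-1)^{\au-\av}$; but since \cref{sup:lem:help} has been tailored to exactly this computation, the direct induction looks like the cleanest route.
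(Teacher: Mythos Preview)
Your proposal is correct and follows essentially the same route as the paper's own proof: induction on $\au$, substitution of the inductive hypothesis into the recursion \eqref{eq:anova:term}, interchange of the nested subset sums, grouping the intermediate sets $\v$ by cardinality to produce the alternating binomial sum, and an appeal to \cref{sup:lem:help} to collapse it to $(-1)^{\au-\abs{\b w}+1}$. Your bookkeeping of the sign and of the fact that every $\b w$ arising is a proper subset of $\u$ matches what the paper does (the paper encodes the latter via an indicator $\delta_{\b w\subset\v}$ before swapping sums), and your closing remark on M\"obius inversion is an accurate alternative viewpoint that the paper does not spell out.
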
  
\begin{proof}
	A proof based on properties of projection operators was given in \cite{KuSlWaWo09} while we use combinatorial arguments. We prove this statement through structural induction over the cardinality of $\b u$. For $\abs{\b u} = 0$, i.e., $\b u = \emptyset$, we have
	\begin{equation*}
		(-1)^{0-0}\left(P_{\emptyset}\right)(\b x) = \left(P_{\emptyset}\right)(\b x) = \left(P_{\emptyset}\right)(\b x) - \sum_{\b v \subsetneq \emptyset} f_{\b v}(\b x).
	\end{equation*} 
	Now, let \eqref{eq:ANOVA:direct} be true for $\v \subset \D$, $\av = 0,1,\dots,m-1$, $m \in \{1,2,\dots,d\}$, and take a subset $\b u \subset \D$ with $\abs{\b u} = m$. We use the notation
	\begin{equation*}
		\delta_{\b w \subset \b v} = \begin{cases}
			1 \quad \colon\b w \subset \b v \\
			0 \quad \colon\text{otherwise.}
		\end{cases}
	\end{equation*}
	and start from the recursive expression in \eqref{eq:anova:term} to obtain
	\begin{align*}
		f_{\b u}(\b x) &= \left(P_{\b u}f\right)(\b x) - \sum_{\b v \subsetneq \b u} f_{\b v}(\b x) 
		= \left(P_{\b u}f\right)(\b x) - \sum_{\b v \subsetneq \b u} \sum_{\b w \subset \b v} (-1)^{\abs{\b v}-\abs{\b w}}\left(P_{\b w}f\right)(\b{x}) \\
		&= \left(P_{\b u}f\right)(\b x) - \sum_{\b v \subsetneq \b u} \sum_{\b w \subsetneq \b u} (-1)^{\abs{\b v}-\abs{\b w}}\left(P_{\b w}f\right)(\b{x})\delta_{\b w \subset \b v}.
	\end{align*}
	We exchange the two sums and sum over the order of the ANOVA terms 
	\begin{align*}
		\sum_{\b v \subsetneq \b u} \sum_{\b w \subsetneq \b u} (-1)^{\abs{\b v}-\abs{\b w}}\left(P_{\b w}f\right)(\b{x})\delta_{\b w \subset \b v} &= \sum_{\b w \subsetneq \b u} \left(P_{\b w}f\right)(\b{x})\sum_{\b v \subsetneq \b u} (-1)^{\abs{\b v}-\abs{\b w}} \delta_{\b w \subset \b v} \\
		&= \sum_{\b w \subsetneq \b u} \left(P_{\b w}f\right)(\b{x})\sum_{n=\abs{\b w}}^{m-1}\sum_{\substack{\b v \subset \b u \\ \abs{\b v} = n}} (-1)^{\abs{\b v}-\abs{\b w}} \delta_{\b w \subset \b v} \\
		&= \sum_{\b w \subsetneq \b u} \left(P_{\b w}f\right)(\b{x})\sum_{n=\abs{\b w}}^{m-1}(-1)^{n-\abs{\b w}}\sum_{\substack{\b v \subset \b u \\ \abs{\b v} = n}}  \delta_{\b w \subset \b v}.
	\end{align*}
	Applying \cref{sup:lem:help} yields the formula.
\end{proof}

We proceed to present a relationship between the Fourier coefficients of $f_{\u}$ and $f$. Furthermore, we prove $f_{\u} \in \Lt(\T^{\au})$. Therefore, we define the index set
\begin{equation*}
\Fud \coloneqq \left\{ \bm k \in \Z^d \colon \bm{k}_{\uc} = \bm 0, k_j \neq 0 \,\forall j \in \bm u \right\}
\end{equation*}
which can be identified with $( \Z \setminus \{ 0 \} )^{\au}$ using the mapping $\bm k \mapsto \bm{k}_{\u}$. Here, we use the convention $( \Z \setminus \{ 0 \} )^{\abs{\emptyset}} = \{ 0 \}$. 
\begin{lemma}\label{lemma:anova:sets}
	Let $\u,\v \subseteq \D$ with $\u \neq \v$. Then $\Fud \cap \mathbb{F}_{\v}^{(d)} = \emptyset$. Moreover, we have
	\begin{equation*}
	\Z^d = \bigcup_{\u \subseteq \D} \Fud.
	\end{equation*}
\end{lemma}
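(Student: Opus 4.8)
The plan is to verify the two assertions of \cref{lemma:anova:sets} directly from the definition of $\Fud$, treating disjointness and covering separately.

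\textbf{Disjointness.} Suppose $\u \neq \v$ and let $\k \in \Fud \cap \Fset{\v}{d}$. Without loss of generality there is an index $j \in \u \setminus \v$ (otherwise swap the roles of $\u$ and $\v$). From $\k \in \Fud$ and $j \in \u$ we get $k_j \neq 0$, while from $\k \in \Fset{\v}{d}$ and $j \in \vc$ (since $j \notin \v$) we get $k_j = 0$, a contradiction. Hence the intersection is empty. One should note the edge case $\u = \emptyset$: then $\Fset{\emptyset}{d} = \{\b 0\}$ under the stated convention, and $\b 0 \notin \Fset{\v}{d}$ for any $\v \neq \emptyset$ because such a $\v$ contains some $j$ with $k_j \neq 0$ required, which $\b 0$ violates; this is consistent with the general argument.

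\textbf{Covering.} For the inclusion $\bigcup_{\u \subseteq \D} \Fud \subseteq \Z^d$ there is nothing to prove. For the reverse inclusion, take an arbitrary $\k \in \Z^d$ and set $\u \coloneqq \supp \k = \{ j \in \D \colon k_j \neq 0 \}$. Then by construction $k_j \neq 0$ for all $j \in \u$, and $k_j = 0$ for all $j \in \uc = \D \setminus \u$, i.e. $\k_{\uc} = \b 0$; therefore $\k \in \Fud$. If $\k = \b 0$ this gives $\u = \emptyset$ and $\k = \b 0 \in \Fset{\emptyset}{d}$, again matching the convention. This exhibits $\k$ in the union and completes the proof.

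I do not expect any real obstacle here; the statement is essentially bookkeeping about supports, and the only thing to be careful about is the degenerate case $\u = \emptyset$ and the convention $(\Z \setminus \{0\})^{\abs{\emptyset}} = \{0\}$, which must be checked to be consistent with both claims. It is worth remarking — though perhaps only in passing — that the covering is in fact a \emph{partition}: combining the two assertions, every $\k \in \Z^d$ lies in exactly one $\Fud$, namely the one with $\u = \supp \k$. This partition is what makes $\Fud$ the natural frequency-domain counterpart of the ANOVA term $f_{\u}$, and it is the fact the subsequent lemmas will exploit.
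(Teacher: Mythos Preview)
Your proof is correct and follows essentially the same approach as the paper: both arguments pick an index $j$ in one set but not the other to derive a contradiction for disjointness, and both define $\u = \supp \k$ for the covering. Your version is in fact slightly cleaner, since you avoid the paper's unnecessary case split on whether $\u \cap \v$ is empty, and you explicitly check the $\u = \emptyset$ convention.
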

\begin{proof}
	Let $\u,\v \subseteq \D$, $\u \neq \v$, and w.l.o.g.\ $\abs{\u} \geq \abs{\v}$. We assume there exists a $\tilde{\bm k} \in \Fud \cap \mathbb{F}_{\v}^{(d)}$ and first consider the case $\u \cap \v = \emptyset$. Since $\tilde{\bm k} \in \Fud$ we have $\tilde{\bm k}_{\uc} = \bm 0$ and therefore $\tilde{\bm k}_{\v} = \bm 0$. This contradicts $\tilde{\bm k} \in \mathbb{F}_{\v}^{(d)}$. In the case of $\u \cap \v \neq \emptyset$ there exists a $j \in \vc \cap \u$. Then $\tilde{\bm k} \in \mathbb{F}_{\v}^{(d)}$ implies that $\tilde{k}_j = 0$ which contradicts $\tilde{\bm k} \in \Fud$.
	
	The inclusion $\bigcup_{\u \subseteq \D} \Fud \subseteq \Z^d$ is trivial since $\Fud \subseteq \Z^d$ for every $\u \subseteq \D$. To prove $\Z^d \subseteq \bigcup_{\u \subseteq \D} \Fud$ we take a $\bm k \in \Z^d$ and define $\u = \{ j \in \D \colon k_j \neq 0 \}$. Then $\bm k \in \Fud$ and therefore $\bm k \in \bigcup_{\u \subseteq \D} \Fud$.
\end{proof}

\begin{lemma}\label{lemma:anova:terms}
	Let $f \in \Lt(\T^d)$ with $\u \subseteq \D$ and $\bm\ell \in \Z^{\au}$. Then
	\begin{equation*}
	\fc{\ell}{f_{\u} } = \begin{cases}
	\fc{\k}{f} \quad &\colon \b\ell \in \left(\Z\setminus\{0\}\right)^{\au} \\
	\delta_{\u,\emptyset}\cdot\fc{0}{f} \quad &\colon \b\ell = \b 0 \\
	0 &\colon \text{otherwise} \\
	\end{cases}
	\end{equation*}
	for $\bm k \in \Z^d$ with $\bm{k}_{\u} = \bm\ell$ and $\bm{k}_{\uc} = \bm 0$. Furthermore, $f_{\u} \in \Lt(\T^{\au})$.
\end{lemma}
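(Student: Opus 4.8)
The plan is to combine the direct representation from \cref{lemma:anova:direct} with the projection identity from \cref{lemma:anova:projections}. Fix $\u\subset\D$ and $\bm\ell\in\Z^{\au}$, and let $\bm k\in\Z^d$ be given by $\bm{k}_{\u}=\bm\ell$, $\bm{k}_{\uc}=\bm 0$. By \eqref{eq:ANOVA:direct} we have $f_{\u}=\sum_{\v\subset\u}(-1)^{\au-\av}\,\mathrm{P}_{\v}f$. Since $\mathrm{P}_{\v}f$ depends only on $\bm{x}_{\v}$, its Fourier expansion on $\T^{\au}$ involves only frequencies $\bm\ell'\in\Z^{\au}$ with $\bm\ell'_{\u\setminus\v}=\bm 0$, and for such frequencies the coefficient is $\fc{\bm\ell'_{\v}}{\mathrm{P}_{\v}f}$. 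Hence
\begin{equation*}
\fc{\bm\ell}{f_{\u}} = \sum_{\substack{\v\subset\u \\ \bm\ell_{\u\setminus\v}=\bm 0}} (-1)^{\au-\av}\,\fc{\bm\ell_{\v}}{\mathrm{P}_{\v}f}.
\end{equation*}

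The next step is to introduce the support pattern $\b s\coloneqq\{\,j\in\u\colon \ell_j\neq 0\,\}\subset\u$ of $\bm\ell$. The constraint $\bm\ell_{\u\setminus\v}=\bm 0$ is precisely $\b s\subset\v$, so the surviving terms are those with $\b s\subset\v\subset\u$. For each such $\v$, \cref{lemma:anova:projections} gives $\fc{\bm\ell_{\v}}{\mathrm{P}_{\v}f}=\fc{\k}{f}$, because the $d$-dimensional index obtained by zero-padding $\bm\ell_{\v}$ coincides with $\bm k$ once $\bm k$ is known to vanish off $\b s\subset\v$. Therefore, writing $\v=\b s\cup\b t$ with $\b t\subset\u\setminus\b s$,
\begin{equation*}
\fc{\bm\ell}{f_{\u}} = \fc{\k}{f}\sum_{\b s\subset\v\subset\u}(-1)^{\au-\av} = \fc{\k}{f}\sum_{\b t\subset\u\setminus\b s}(-1)^{\abs{\u\setminus\b s}-\abs{\b t}}.
\end{equation*}
The inner alternating sum --- an instance of the binomial identity underlying \cref{sup:lem:help} --- equals $1$ when $\b s=\u$ and $0$ otherwise.

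It then remains to read off the three cases. If $\bm\ell\in(\Z\setminus\{0\})^{\au}$ then $\b s=\u$, the sum is $1$, and $\fc{\bm\ell}{f_{\u}}=\fc{\k}{f}$. If $\bm\ell=\bm 0$ then $\b s=\emptyset$ and the sum collapses to $\sum_{\v\subset\u}(-1)^{\au-\av}$, which is $1$ for $\u=\emptyset$ and $0$ otherwise, i.e., $\delta_{\u,\emptyset}$; since $\bm k=\bm 0$ here, this gives $\delta_{\u,\emptyset}\fc{0}{f}$. In the remaining mixed case $\emptyset\subsetneq\b s\subsetneq\u$ one has $\abs{\u\setminus\b s}\geq 1$, so the sum vanishes and $\fc{\bm\ell}{f_{\u}}=0$. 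Finally, $f_{\u}\in\Lt(\T^{\au})$ because \eqref{eq:ANOVA:direct} writes it as a finite sum of projections $\mathrm{P}_{\v}f$, $\v\subset\u$, each lying in $\Lt(\T^{\au})$; equivalently, the coefficients just computed satisfy $\sum_{\bm\ell\in\Z^{\au}}\abs{\fc{\bm\ell}{f_{\u}}}^2 \le \sum_{\bm k\in\Z^d}\abs{\fc{\k}{f}}^2 = \norm{f}{\Lt(\Td)}^2 < \infty$, so that, together with \cref{lemma:anova:sets}, $f_{\u}$ is exactly the part of the Fourier series of $f$ supported on $\Fud$.

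The step I expect to need the most care is the bookkeeping across the three index spaces involved --- the $\au$-dimensional frequency $\bm\ell$, the ambient $\bm k\in\Z^d$, and the $\av$-dimensional frequency of $\mathrm{P}_{\v}f$ --- and in particular verifying that under the support condition $\b s\subset\v$ the zero-padding in \cref{lemma:anova:projections} reproduces exactly $\bm k$, so that every surviving summand carries the same factor $\fc{\k}{f}$. There is no analytic subtlety beyond this; the only genuine computation is the alternating binomial sum, which is already isolated in \cref{sup:lem:help}.
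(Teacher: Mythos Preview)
Your proof is correct and follows essentially the same route as the paper: expand $f_{\u}$ via the direct formula \eqref{eq:ANOVA:direct}, invoke \cref{lemma:anova:projections} to identify each surviving projection coefficient with $\fc{\k}{f}$, and collapse the remaining alternating sum by the binomial identity. Your introduction of the support set $\b s$ unifies the three cases that the paper treats separately, but the underlying argument is the same.
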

\begin{proof}
	We begin by employing the direct formula \eqref{eq:ANOVA:direct} to obtain
	\begin{align*}
	\fc{\ell}{f_{\u}} &= \int_{\T^{\au}} f_{\u}(\x_{\u}) \,\e^{-2\pi\i\scall{\b\ell}{\x_{\u}}} \d\x_{\u} \\
	&= \int_{\T^{\au}} \left[\sum_{\v \subset \u} (-1)^{\au-\av} \mathrm{P}_{\v} f (\x_{\v})\right] \e^{-2\pi\i\scall{\b\ell}{\x_{\u}}}  \d\x_{\u} \\
	&= \sum_{\v \subset \u} (-1)^{\au-\av} \int_{\T^{\au}} \mathrm{P}_{\v} f (\x_{\v})\, \e^{-2\pi\i\scall{\b\ell}{\x_{\u}}}  \d\x_{\u} \\
	&= \sum_{\v \subset \u} (-1)^{\au-\av} \mathrm{c}_{\k_{\v}}\left(\mathrm{P}_{\v}f\right) \delta_{\k_{\u\setminus\v},\b 0}.
	\end{align*}
	We go on to prove $\fc{0}{f_{\u}} = \delta_{\u,\emptyset}\cdot\fc{0}{f}$. In this case, $\k_{\v} = \b 0$ and $\delta_{\k_{\u\setminus\v},\b 0} = 1$ for every $\v \subset \u$. By the Binomial Theorem, we have
	\begin{align*}
	\fc{\ell}{f_{\u}} &= \sum_{\v \subset \u} (-1)^{\au-\av} \mathrm{c}_{\k_{\v}}\left(\mathrm{P}_{\v}f\right) \delta_{\k_{\u\setminus\v},\b 0} = \fc{0}{f} \sum_{\v \subset \u} (-1)^{\au-\av} \\
	&= \fc{0}{f} \sum_{n=0}^{\au} \binom{\au}{n} (-1)^{\au-n} = \fc{0}{f}\cdot\delta_{\u,\emptyset}.
	\end{align*}
	For the second case, we consider an $\b\ell$ and with a set $\overline{\v} \subset \u$ such that $\emptyset\neq\overline{\v} \coloneqq \{ i \in \u \colon k_i = 0 \}\neq\u$. Then $\delta_{\k_{\u\setminus\v},\b 0} = 1 \Longleftrightarrow \overline{\v}^\mathrm{c} \coloneqq \u\setminus\overline{\v} \subset \v$ and with the Binomial Theorem we get
	\begin{align*}
	\fc{\ell}{f_{\u}} &= \sum_{\v \subset \u} (-1)^{\au-\av} \mathrm{c}_{\k_{\v}}\left(\mathrm{P}_{\v}f\right) \delta_{\k_{\u\setminus\v},\b 0} = \sum_{\overline{\v}^\mathrm{c} \subset \v \subset \u} (-1)^{\au-\av} \mathrm{c}_{\k_{\v}}\left(\mathrm{P}_{\v}f\right) \\
	&= \fc{\k}{f} \sum_{\overline{\v}^\mathrm{c} \subset \v \subset \u} (-1)^{\au-\av} = \fc{\k}{f} \sum_{n=\abs{\overline{\v}^\mathrm{c}}}^{\au} \binom{\au-\abs{\overline{\v}^\mathrm{c}}}{n-\abs{\overline{\v}^\mathrm{c}}} (-1)^{\au-n} \\
	&= \fc{\k}{f} \sum_{m=0}^{\au-\abs{\overline{\v}^\mathrm{c}}} \binom{\au-\abs{\overline{\v}^\mathrm{c}}}{m} (-1)^{\au-\abs{\overline{\v}^\mathrm{c}}-m} = 0.
	\end{align*}
	For the case were the entries of $\b\ell$ are all nonzero, only the addend where $\v = \u$ is nonzero, i.e., $\fc{\ell}{f_{\u}} = \fc{\k}{f}$.
\end{proof}

With \cref{lemma:anova:terms} we have two equivalent series representations for the ANOVA term $f_{\u}$, the $d$-dimensional Fourier series $f_{\u} (\bm x) = \sum_{\bm k \in \Fud} \fc{\k}{f} \, \e^{2\pi\i\scal{k}{x}}$ and the $\au$-dimensional Fourier series $f_{\u} (\bm{x}_{\u}) = \sum_{\bm \ell \in \Z^{\au}} \fc{\ell}{f_{\u}} \, \e^{2\pi\i\scall{\bm\ell}{\bm{x}_{\u}}}$ with $\fc{\ell}{f_{\u}}$ as in \cref{lemma:anova:terms}. The ANOVA terms have the following important property.
\begin{corollary}\label{cor:ortho}
	Let $f \in \Lt(\T^d)$ and $\u,\v \subseteq \D$ with $\u \neq \v$. Then the ANOVA terms $f_{\u}$ and $f_{\v}$ are orthogonal, i.e.,
	\begin{equation*}
	\langle f_{\u}, f_{\v} \rangle_{\Lt(\T^d)} = 0.
	\end{equation*}
\end{corollary}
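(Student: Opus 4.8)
The plan is to use the $d$-dimensional Fourier series representation of the ANOVA terms recorded immediately after \cref{lemma:anova:terms}, namely $f_{\u}(\x) = \sum_{\k \in \Fud} \fc{\k}{f}\,\e^{2\pi\i\scal{k}{x}}$, in combination with the disjointness statement $\Fud \cap \Fset{\v}{d} = \emptyset$ from \cref{lemma:anova:sets}. First I would record that this representation exhibits $f_{\u}$ as an element of $\Lt(\T^d)$ whose Fourier coefficients vanish off the index set $\Fud$: concretely, $\fc{\k}{f_{\u}} = \fc{\k}{f}$ for $\k \in \Fud$ and $\fc{\k}{f_{\u}} = 0$ otherwise (for $\u = \emptyset$ this includes the constant term at $\k = \b 0$, consistent with $\b 0 \in \Fset{\emptyset}{d}$ but $\b 0 \notin \Fset{\v}{d}$ for $\v \neq \emptyset$).

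Next I would expand the inner product via Parseval's identity for the orthonormal system $\{\e^{2\pi\i\scal{k}{x}}\}_{\k \in \Z^d}$ of $\Lt(\T^d)$,
\begin{equation*}
\langle f_{\u}, f_{\v} \rangle_{\Lt(\T^d)} = \sum_{\k \in \Z^d} \fc{\k}{f_{\u}}\,\overline{\fc{\k}{f_{\v}}},
\end{equation*}
and argue termwise: if $\fc{\k}{f_{\u}} \neq 0$ then $\k \in \Fud$, whence $\k \notin \Fset{\v}{d}$ by \cref{lemma:anova:sets} (using $\u \neq \v$), so $\fc{\k}{f_{\v}} = 0$ and the summand is zero. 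Thus every summand vanishes and $\langle f_{\u}, f_{\v} \rangle_{\Lt(\T^d)} = 0$, which is the claim.

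There is essentially no obstacle here; the statement is a bookkeeping consequence once \cref{lemma:anova:terms} and \cref{lemma:anova:sets} are in hand. The only point deserving a word of care is the passage from ``$f_{\u} \in \Lt(\T^{\au})$'', as stated in \cref{lemma:anova:terms}, to ``$f_{\u} \in \Lt(\T^d)$'' with the stated $d$-dimensional coefficients — this is legitimate because the two series share the same square-summable coefficient sequence. If one prefers to avoid invoking Parseval, the same conclusion follows by integrating the product of the two $d$-dimensional Fourier series directly and using $\int_{\Td}\e^{2\pi\i\scal{k}{x}}\,\e^{-2\pi\i\scal{h}{x}}\d\x = \delta_{\k,\h}$, which once more reduces to the disjointness of $\Fud$ and $\Fset{\v}{d}$.
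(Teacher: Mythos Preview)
Your proof is correct and follows essentially the same route as the paper: both invoke the $d$-dimensional Fourier representation of $f_{\u}$ from \cref{lemma:anova:terms} together with the disjointness $\Fud \cap \Fset{\v}{d} = \emptyset$ from \cref{lemma:anova:sets}, then use orthonormality of the Fourier system to conclude. The paper writes out the double sum with a Kronecker delta while you phrase it via Parseval and a termwise argument, but the underlying mechanism is identical.
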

\begin{proof}
	We employ \cref{lemma:anova:sets} and \cref{lemma:anova:terms} to deduce
	\begin{align*}
		\langle f_{\u}, f_{\v} \rangle_{\Lt(\T^d)} &= \langle \sum_{\bm k \in \Fud} \fc{\k}{f} \, \e^{2\pi\i\scal{k}{x}}, \sum_{\bm \ell \in \Fud} \fc{\ell}{f} \, \e^{2\pi\i\scal{\ell}{x}} \rangle_{\Lt(\T^d)} \\
		&=  \sum_{\bm k \in \Fud} \sum_{\bm \ell \in \Fud} \fc{\k}{f} \overline{\fc{\ell}{f}}\,\delta_{\k,\bm\ell} = 0.
	\end{align*}
\end{proof}

Having defined the ANOVA terms, we now go on to the \textbf{ANOVA decomposition}, cf.\ \cite{CaMoOw97,LiOw06}.
\begin{theorem}\label{thm:anova_decomp}
	Let $f \in \Lt(\T^d)$, the ANOVA terms $f_{\u}$ as in \eqref{eq:anova:term} and the set of coordinate indices $\D = \{1,2,\dots,d\}$. Then f can be uniquely decomposed as
	\begin{equation}\label{eq:ANOVA:decomp}
	f(\b x) = f_\emptyset + \sum_{i=1}^{d} f_{ \{i\} } (x_i) + \sum_{i = 1}^{d-1} \sum_{j = i+1}^{d} f_{ \{i,j\} } (\b{x}_{\{i,j\}})+ \dots + f_\D(\x) = \sum_{\u \subseteq \D} f_{\b u}(\b{x}_{\u}) 
	\end{equation}
	which we call \textbf{analysis of variance (ANOVA) decomposition}.
\end{theorem}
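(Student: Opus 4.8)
The plan is to establish the two halves of the statement separately — the identity $f=\sum_{\u\subseteq\D}f_{\u}$ (existence) and its uniqueness — working almost entirely in the frequency domain, where the preparatory lemmas do the work.

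\emph{Existence.} By the remark following \cref{lemma:anova:terms}, every ANOVA term has the $d$-dimensional Fourier representation $f_{\u}(\x)=\sum_{\k\in\Fud}\fc{\k}{f}\,\e^{2\pi\i\scal{k}{x}}$. Since the sum over $\u\subseteq\D$ has only $2^d$ summands and each $f_{\u}\in\Lt(\Td)$, the function $g\coloneqq\sum_{\u\subseteq\D}f_{\u}$ again lies in $\Lt(\Td)$, and its Fourier coefficients are the finite sums of those of the $f_{\u}$. By \cref{lemma:anova:sets} the family $\{\Fud\}_{\u\subseteq\D}$ is a partition of $\Z^d$, so every $\k\in\Z^d$ lies in exactly one $\Fud$ (the one with $\u=\{j\in\D\colon k_j\neq0\}$); hence $\fc{\k}{g}=\fc{\k}{f}$ for all $\k$, and therefore $g=f$ in $\Lt(\Td)$. \cref{cor:ortho} shows in addition that the decomposition is orthogonal, with $\norm{f}{\Lt(\Td)}^2=\sum_{\u\subseteq\D}\norm{f_{\u}}{\Lt(\Td)}^2$. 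Alternatively one could insert the direct formula \eqref{eq:ANOVA:direct} and interchange the two summations, which collapses the claim to the elementary identity $\sum_{\v\subseteq\u\subseteq\D}(-1)^{\au-\av}=\delta_{\v,\D}$ and gives $\sum_{\u\subseteq\D}f_{\u}=\mathrm{P}_{\D}f=f$.

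\emph{Uniqueness.} First I would pin down the normalization conditions that single out the $f_{\u}$: for $\u\neq\emptyset$, \cref{lemma:anova:terms} says that every Fourier coefficient of $f_{\u}$ whose frequency $\b\ell$ has a vanishing component is zero, which is equivalent to $\int_{\T}f_{\u}(\x_{\u})\,\d x_i=0$ for all $i\in\u$; in projection language, $\mathrm{P}_{\v}f_{\u}=0$ whenever $\u\not\subseteq\v$ and $\mathrm{P}_{\v}f_{\u}=f_{\u}$ whenever $\u\subseteq\v$. Now let $(g_{\u})_{\u\subseteq\D}$ be any family with $g_{\u}\in\Lt(\T^{\au})$ depending only on $\x_{\u}$, satisfying the same annihilation property, and with $f=\sum_{\u\subseteq\D}g_{\u}$. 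Applying $\mathrm{P}_{\v}$ to both sides, and using that $\mathrm{P}_{\v}g_{\u}$ vanishes unless $\u\subseteq\v$ (integrate out any variable $x_i$ with $i\in\u\cap\vc$) and equals $g_{\u}$ otherwise, yields $\mathrm{P}_{\v}f=\sum_{\u\subseteq\v}g_{\u}$, i.e.\ the recursion \eqref{eq:anova:term} with $g$ in place of $f$. A structural induction over $\av$ then finishes: $g_{\emptyset}=\mathrm{P}_{\emptyset}f=\fc{0}{f}=f_{\emptyset}$, and if $g_{\b w}=f_{\b w}$ for all $\b w\subsetneq\v$ then $g_{\v}=\mathrm{P}_{\v}f-\sum_{\b w\subsetneq\v}g_{\b w}=\mathrm{P}_{\v}f-\sum_{\b w\subsetneq\v}f_{\b w}=f_{\v}$ by \eqref{eq:anova:term}.

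I do not expect a real obstacle here: once \cref{lemma:anova:sets}, \cref{lemma:anova:terms} and \cref{cor:ortho} are in hand the argument is bookkeeping. The two points that need a little care are, first, the legitimacy of reading off the Fourier coefficients of $\sum_{\u\subseteq\D}f_{\u}$ termwise — this is valid precisely because the outer sum is finite and $\k\mapsto\fc{\k}{\cdot}$ is linear and continuous on $\Lt(\Td)$, so no rearrangement of infinite series is involved — and second, making explicit the side conditions relative to which ``unique'' is meant and checking that the constructed $f_{\u}$ satisfy them, since the bare identity $f=\sum_{\u\subseteq\D}g_{\u}$ without such conditions is certainly not unique. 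I would state these conditions in, or immediately before, the theorem.
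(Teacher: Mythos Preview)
Your existence argument is essentially the paper's own proof: both read off the Fourier representation $f_{\u}(\x)=\sum_{\k\in\Fud}\fc{\k}{f}\,\e^{2\pi\i\scal{k}{x}}$ from \cref{lemma:anova:terms} and then invoke the partition $\Z^d=\bigcup_{\u\subseteq\D}\Fud$ of \cref{lemma:anova:sets} to collapse the double sum back to the Fourier series of $f$.

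Where you differ is on uniqueness. The paper dispatches this in one sentence, ``Since the union is disjoint, the decomposition is unique,'' implicitly meaning: unique among decompositions $f=\sum_{\u}g_{\u}$ in which each $g_{\u}$ has Fourier support contained in $\Fud$. You instead spell out the equivalent annihilation conditions $\int_{\T}f_{\u}\,\d x_i=0$ for $i\in\u$, show that any competing family $(g_{\u})$ satisfying them reproduces the recursion \eqref{eq:anova:term} after applying $\mathrm{P}_{\v}$, and then induct on $\av$. Your route is longer but more explicit about what ``unique'' is unique relative to --- a point the paper leaves to the reader --- and it works directly from the projection operators rather than the Fourier support description, so it would transfer unchanged to the non-periodic ANOVA setting. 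Both arguments are correct; yours simply makes the hypotheses of the uniqueness claim visible.
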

\begin{proof}
	We use that $\Z^d$ is the disjoint union of the sets $\Fud$ for $\u \subset \D$ and obtain
	\begin{align*}
		\sum_{\u \subset \D} f_{\u}( \x_{\u} ) &= \sum_{\u \subset \D} \sum_{\bm k \in \Fud} \fc{k}{f} \, \e^{2\pi\i\scal{k}{x}} = \sum_{\bm k \in \bigcup_{\u \subseteq \D} \Fud} \fc{k}{f} \, \e^{2\pi\i\scal{k}{x}} \\
		&= \sum_{\bm k \in \Z^d} \fc{k}{f} \, \e^{2\pi\i\scal{k}{x}} = f(\b x).
	\end{align*}
	Since the union is disjoint, the decomposition is unique.
\end{proof}
\begin{remark}
	The ANOVA decomposition \eqref{eq:ANOVA:decomp} depends strongly on the projection operator $\mathrm{P}_{\u} f$, see \eqref{eq:anova:projection_operator}. The integral operator considered in this paper leads to the so called classical ANOVA decomposition. Another important variant is the anchored decomposition where one chooses an anchor point $\b c \in \T^d$ and the projection operator is then defined as 
	\begin{equation*}
	\mathrm{P}_{\u} f (\x_{\u}) = f(\b y), \,\b{y}_{\u} = \x_{\u}, \b{y}_{\uc} = \b{c}_{\uc}.
	\end{equation*}
	This decomposition can for example be used in methods for the integration of high-dimensional functions such as the multivariate decomposition method, see e.g.\ \cite{KuNuPlSlWa17, GiKuNuWa18}. However, the error analysis may again be based on the classical ANOVA decomposition, see e.g.\ \cite{GnHeHiRi17}. 
\end{remark}

In \Cref{fig:ANOVA:hypercube} we have visualized the different frequency index sets $\Fud$, $\u \subset \D$, for a $3$-dimensional example.

\begin{figure}[ht]
	\centering
	\subfloat[$\mathbb{F}_{\emptyset}^{(3)}$, $\mathbb{F}_{\{1\}}^{(3)} \cap {[-8,8]}^3$, $\mathbb{F}_{\{2\}}^{(3)} \cap {[-8,8]}^3$, and $\mathbb{F}_{\{3\}}^{(3)} \cap {[-8,8]}^3$]{
		\includegraphics{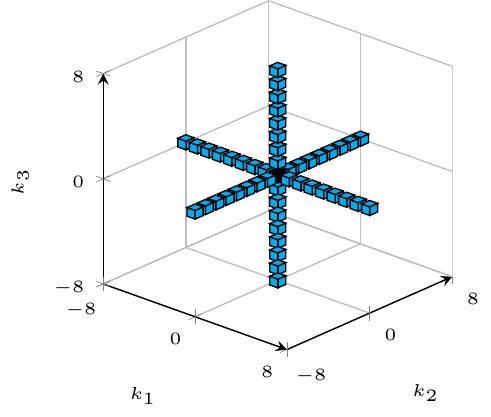}
	}
	\hfill
	\subfloat[$\mathbb{F}_{\{1,2\}}^{(3)} \cap {[-8,8]}^3$]{
		\includegraphics{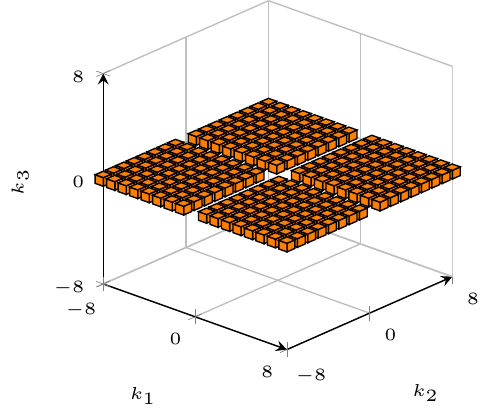}
	}
	\\
	\subfloat[$\mathbb{F}_{\{1,3\}}^{(3)} \cap {[-8,8]}^3$]{
		\includegraphics{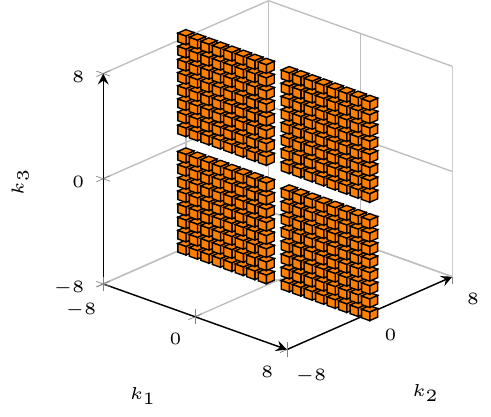}
	}
	\hfill
	\subfloat[$\mathbb{F}_{\{2,3\}}^{(3)} \cap {[-8,8]}^3$]{
		\includegraphics{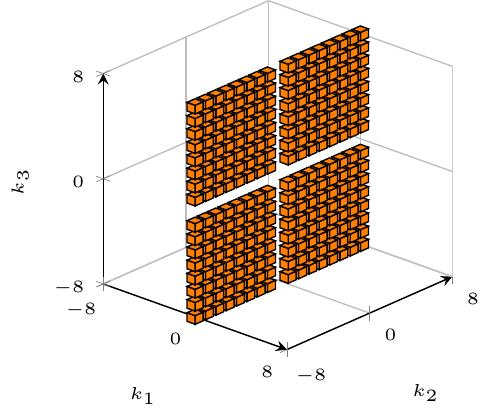}
	}
	\\
	\subfloat[$\mathbb{F}_{\{1,2,3\}}^{(3)} \cap {[-8,8]}^3$]{
		\includegraphics{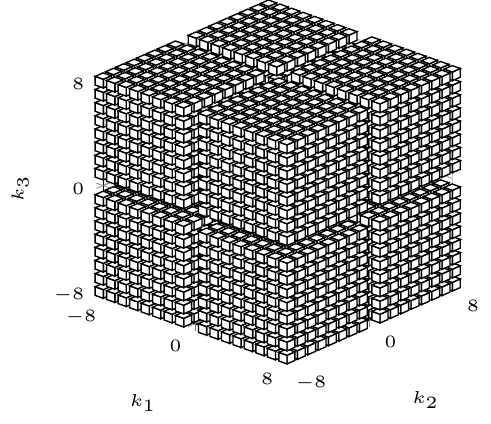}
	}
	\caption{The ANOVA decomposition working on the hypercube $[-8,8]^3$ as a part of the 3-dimensional index set $\Z^3$.}
	\label{fig:ANOVA:hypercube}
\end{figure}

\subsection{Variance and Sensitivity}

In order to get a notion of the importance of single terms compared to the entire function, we define the \textbf{variance of a function}
\begin{equation*}
\va{f} \coloneqq \int_{\T^d} \left( f(\x) - \fc{\b 0}{f} \right)^2 \,\d\x
\end{equation*}
for real-valued $f$. In this case, we have the equivalent formulation
\begin{equation*}
\va{f} = \norm{f}{\Lt(\T^d)}^2 - \abs{\fc{\b 0}{f}}^2
\end{equation*}
which yields a sensible definition for complex-valued functions $f$. For the ANOVA terms $f_{\u}$ with $\emptyset \neq \u \subset \D$ we have $\fc{\b 0}{f_{\u}} = 0$ and therefore
\begin{equation*}
\va{f_{\u}} = \norm{f_{\u}}{\Lt(\T^{\au})}^2.
\end{equation*}
\begin{lemma}\label{lemma:anova:variance}
	Let $f \in \Lt(\T^d)$. Then we obtain for the variance
	\begin{equation*}
	\va{f} = \sum_{\emptyset\neq\u\subset\D} \va{f_{\u}}.
	\end{equation*}
\end{lemma}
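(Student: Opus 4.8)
The plan is to read the statement off from the ANOVA decomposition of \cref{thm:anova_decomp} together with the orthogonality of the ANOVA terms from \cref{cor:ortho}. First I would isolate the empty term: by \cref{lemma:anova:terms} (the case $\u=\emptyset$) the term $f_\emptyset$ is the constant $\fc{\b 0}{f}$, so that
\begin{equation*}
	f(\x) - \fc{\b 0}{f} = \sum_{\emptyset\neq\u\subset\D} f_{\u}(\x_{\u})
\end{equation*}
as an identity in $\Lt(\Td)$.

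Next I would take $\Lt(\Td)$-norms on both sides. Since the $f_{\u}$, $\emptyset\neq\u\subset\D$, are pairwise orthogonal in $\Lt(\Td)$ by \cref{cor:ortho}, the Pythagorean theorem gives
\begin{equation*}
	\va{f} = \norm{f-\fc{\b 0}{f}}{\Lt(\Td)}^2 = \sum_{\emptyset\neq\u\subset\D} \norm{f_{\u}}{\Lt(\Td)}^2 .
\end{equation*}
Finally I would identify each summand with $\va{f_{\u}}$: the function $f_{\u}$ depends only on $\x_{\u}$, so its $\Lt(\Td)$- and $\Lt(\T^{\au})$-norms agree, and since $\fc{\b 0}{f_{\u}}=0$ for $\emptyset\neq\u$ (again \cref{lemma:anova:terms}) we have $\norm{f_{\u}}{\Lt(\T^{\au})}^2 = \va{f_{\u}}$, as already noted before the statement. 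Combining the three displays yields the claim.

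An alternative, purely frequency-domain route avoids \cref{cor:ortho}: by \cref{lemma:anova:sets} the set $\Z^d\setminus\{\b 0\}$ is the disjoint union of the $\Fud$ over $\emptyset\neq\u\subset\D$, so Parseval's identity for $f$ gives
\begin{equation*}
	\norm{f}{\Lt(\Td)}^2 = \sum_{\k\in\Z^d} \abs{\fc{\k}{f}}^2 = \abs{\fc{\b 0}{f}}^2 + \sum_{\emptyset\neq\u\subset\D}\ \sum_{\k\in\Fud} \abs{\fc{\k}{f}}^2 ,
\end{equation*}
and by \cref{lemma:anova:terms} the inner sum equals $\norm{f_{\u}}{\Lt(\T^{\au})}^2 = \va{f_{\u}}$. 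I do not expect a genuine obstacle here; the only points needing a little care are the bookkeeping around the empty set (peeling off the constant term consistently on the spatial and the frequency side) and the harmless identification of the $d$-dimensional and $\au$-dimensional $\Lt$-norms of $f_{\u}$.
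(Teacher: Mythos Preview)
Your proposal is correct. The paper's proof is precisely your ``alternative, purely frequency-domain route'': it invokes \cref{lemma:anova:sets} and \cref{lemma:anova:terms} to write $\sum_{\emptyset\neq\u\subset\D}\va{f_{\u}} = \sum_{\emptyset\neq\u\subset\D}\sum_{\k\in\Fud}\abs{\fc{\k}{f}}^2$ and then collapses the disjoint union to $\sum_{\k\in\Z^d}\abs{\fc{\k}{f}}^2 - \abs{\fc{\b 0}{f}}^2$. Your primary route via \cref{cor:ortho} and Pythagoras is an equally valid and arguably more conceptual packaging of the same content, since \cref{cor:ortho} is itself derived from \cref{lemma:anova:sets} and \cref{lemma:anova:terms}.
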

\begin{proof}
	We show that the right-hand side equals the left-hand side by employing \cref{lemma:anova:sets} and \cref{lemma:anova:terms} 
	\begin{align*}
	\sum_{\emptyset\neq\u\subset\D} \va{f_{\u}} &= \sum_{\emptyset\neq\u\subset\D} \sum_{\k\in\Fud} \abs{\fc{\k}{f}}^2 = \sum_{\k\in\bigcup_{\emptyset\neq\u \subseteq \D}\Fud} \abs{\fc{\k}{f}}^2 \\
	&= \sum_{\k\in\Z^d} \abs{\fc{\k}{f}}^2 - \abs{\fc{\b 0}{f}}^2 = \norm{f}{\Lt(\T^d)}^2 - \abs{\fc{\b 0}{f}}^2.
	\end{align*}
\end{proof}

The global sensitivity indices 
\begin{equation}\label{eq:anova:gsi}
\gsi{\u}{f} \coloneqq \frac{\va{f_{\u}}}{\va{f}} \in [0,1]
\end{equation}
for $\emptyset\neq\u\subset\D$ provide a comparable score to rank the importance of ANOVA terms against each other, cf.\ \cite{So90, So01, LiOw06}. Clearly, we have $\sum_{\emptyset\neq\u\subset\D} \gsi{\u}{f} = 1$ by \cref{lemma:anova:variance}.

We now introduce one notion of \textbf{effective dimensions} as proposed in \cite{CaMoOw97}. Given a fixed $\delta \in (0,1]$, the general notion of \textbf{superposition dimension} is defined as the minimum \begin{equation*}
\min  \left\{ s \in \D \colon \sum_{\substack{\emptyset\neq\u \subset \D \\ \au \leq s}} \va{f_{\u}} \geq \delta \va{f} \right\}. 
\end{equation*} If we consider a particular Hilbert space $H \subset \L_2(\T^d)$ with norm $\norm{\cdot}{H}$, we modify the superposition dimension in the sense of this space, see e.g.~\cite{Owen2019}. For $f \in H$ and $\delta \in (0,1]$ we define the modified superposition dimension as \begin{equation}\label{eq:anova:superposition_specific}
	\dsuper \coloneqq \min \left\{ s \in \D \colon \sup_{\norm{f}{H} \leq 1} \sum_{\au > s} \norm{f_{\u}}{\L_2(\T^d)}^2 \leq 1 - \delta \right\}.
\end{equation}

Finally, we investigate how the smoothness of $f$ translates to projections $\mathrm{P}_{\u} f$ and ANOVA terms $f_{\u}$. For a different setting this has been discussed in \cite{LiOw06, GrKuSl10, GrKuSl16} and therein called \textit{inheritance of smoothness}. In our setting, we express smoothness through special subspaces of $\Lt(\Td)$ and how $f$ being an element of those spaces translates to the projections $\mathrm{P}_{\u} f$ and ANOVA terms $f_{\u}$. In particular, we look at \textbf{Sobolev type spaces}, cf.\ \cite{KuMaUl16}, 
\begin{equation*}
\st(\T^d) \coloneqq \left\{ f \in \Lt(\Td) \colon \norm{f}{\st(\T^d)} \coloneqq \left(\sum_{\bm k \in \Z^d} w^2(\k) \abs{\fc{\k}{f}}^2\right)^{\frac{1}{2}} < \infty \right\} 
\end{equation*}
and the \textbf{weighted Wiener algebra}
\begin{equation*}
\aw(\T^d) \coloneqq \left\{ f \in \mathrm{L}_1(\Td) \colon \norm{f}{\aw(\T^d)} \coloneqq \sum_{\bm k \in \Z^d} w(\k) \abs{\fc{\k}{f}} < \infty \right\} 
\end{equation*}
with a weight function $\fun{w}{\Z^d}{[1,\infty)}$ for both cases. 

\begin{theorem}[Inheritance of smoothness for Sobolev type spaces]\label{thm:inheritance1}
	Let $f \in \st(\T^d)$ with weight function $\fun{w}{\Z^d}{[1,\infty)}$. Then for any weight function \\ $\fun{w_{\u}}{\Z^{\au}}{[1,\infty)}$ with
	\begin{equation*}
	w_{\u}(\bm{k}_{\u}) \leq w(\k) \,\forall \k \in \Pud
	\end{equation*}
	we have $\mathrm{P}_{\u} f \in \mathrm{H}^{w_{\u}}(\T^{\au})$ and $f_{\u} \in \mathrm{H}^{w_{\u}}(\T^{\au})$.
\end{theorem}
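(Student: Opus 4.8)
The plan is to compute the $\sobolev{w_{\u}}(\T^{\au})$-norm of each object directly from its Fourier coefficients and bound it by $\norm{f}{\st(\T^d)}$. Since $f \in \st(\T^d) \subset \Lt(\T^d)$, the projection $\mathrm{P}_{\u} f$ lies in $\Lt(\T^{\au})$ (as already noted via Cauchy--Schwarz) and $f_{\u} \in \Lt(\T^{\au})$ by \cref{lemma:anova:terms}, so both have well-defined Fourier expansions on $\T^{\au}$ and it suffices to show the corresponding weighted coefficient sums are finite. The key observation throughout is that the map $\k \mapsto \k_{\u}$ is a bijection between $\Pud$ and $\Z^{\au}$, so sums over $\Z^{\au}$ can be rewritten as sums over $\Pud$.

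For $\mathrm{P}_{\u} f$: by \cref{lemma:anova:projections}, $\fc{\b\ell}{\mathrm{P}_{\u} f} = \fc{\k}{f}$ whenever $\k_{\u} = \b\ell$ and $\k_{\uc} = \b 0$, i.e.\ for the unique $\k \in \Pud$ with $\k_{\u} = \b\ell$. Hence
\begin{equation*}
\norm{\mathrm{P}_{\u} f}{\sobolev{w_{\u}}(\T^{\au})}^2 = \sum_{\b\ell \in \Z^{\au}} w_{\u}^2(\b\ell)\,\abs{\fc{\b\ell}{\mathrm{P}_{\u} f}}^2 = \sum_{\k \in \Pud} w_{\u}^2(\k_{\u})\,\abs{\fc{\k}{f}}^2 \le \sum_{\k \in \Pud} w^2(\k)\,\abs{\fc{\k}{f}}^2,
\end{equation*}
where the last step uses the hypothesis $w_{\u}(\k_{\u}) \le w(\k)$ for $\k \in \Pud$. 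Extending the final sum from $\Pud$ to all of $\Z^d$ (legitimate, all summands being nonnegative) bounds this by $\norm{f}{\st(\T^d)}^2 < \infty$.

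For $f_{\u}$: by \cref{lemma:anova:terms} the only nonvanishing Fourier coefficients $\fc{\b\ell}{f_{\u}}$ with $\b\ell \in \Z^{\au}$ are those for which $\b\ell$ has all entries nonzero, where $\fc{\b\ell}{f_{\u}} = \fc{\k}{f}$ for the corresponding $\k \in \Fud$; the coefficient at $\b\ell = \b 0$ is $\delta_{\u,\emptyset}\fc{\b 0}{f}$, which is consistent with this since $\Fud = \{\b 0\}$ when $\u = \emptyset$ by the stated convention. As $\Fud \subset \Pud$, exactly the same chain of inequalities as above gives
\begin{equation*}
\norm{f_{\u}}{\sobolev{w_{\u}}(\T^{\au})}^2 = \sum_{\k \in \Fud} w_{\u}^2(\k_{\u})\,\abs{\fc{\k}{f}}^2 \le \sum_{\k \in \Pud} w^2(\k)\,\abs{\fc{\k}{f}}^2 \le \norm{f}{\st(\T^d)}^2 < \infty.
\end{equation*}

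There is no real analytic obstacle here: the heavy lifting was already done in \cref{lemma:anova:projections} and \cref{lemma:anova:terms}, which reduce the statement to a comparison of nonnegative series. The only points requiring attention are the identification of $\Z^{\au}$ with $\Pud$ via $\k \mapsto \k_{\u}$, the empty-set conventions ($\Z^{\abs{\emptyset}} = \{0\}$, $(\Z\setminus\{0\})^{\abs{\emptyset}} = \{0\}$, $\Pset{\emptyset}{d} = \{\b 0\}$), and invoking the weight hypothesis only on $\Pud$, which is precisely where it is assumed to hold.
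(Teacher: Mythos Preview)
Your proof is correct and follows essentially the same approach as the paper's own proof: both arguments use \cref{lemma:anova:projections} and \cref{lemma:anova:terms} to rewrite the weighted coefficient sums over $\Pud$ and $\Fud$, apply the weight hypothesis $w_{\u}(\k_{\u}) \le w(\k)$ on $\Pud$, and then bound by the full sum over $\Z^d$. Your explicit remarks on the bijection $\k \mapsto \k_{\u}$, the inclusion $\Fud \subset \Pud$, and the empty-set conventions are helpful clarifications but do not alter the argument.
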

\begin{proof}
	We show that the norm $\norm{\mathrm{P}_{\u} f}{\mathrm{H}^{w_{\u}}(\T^{\au})}$ is finite by using \cref{lemma:anova:projections}
	\begin{align*}
	\sum_{\bm \ell \in \Z^{\au}} w_{\u}^2(\b\ell) \abs{\fc{\ell}{\mathrm{P}_{\u} f}}^2 &= \sum_{\k \in \Pud} w_{\u}^2(\k_{\u}) \abs{\fc{\k}{f}}^2 \leq \sum_{\k \in \Pud} w^2(\k) \abs{\fc{\k}{f}}^2 \\
	&\leq \sum_{\k \in \Z^d} w^2(\k) \abs{\fc{\k}{f}}^2 = \norm{f}{\st(\T^d)}^2 < \infty.
	\end{align*} 
	Analogously, we employ \cref{lemma:anova:terms} to prove $f_{\u} \in \mathrm{H}^{w_{\u}}(\T^{\au})$
	\begin{align*}
	\sum_{\bm \ell \in \Z^{\au}} w_{\u}^2(\b\ell) \abs{\fc{\ell}{f_{\u}}}^2 &= \sum_{\k \in \Fud} w_{\u}^2(\k_{\u}) \abs{\fc{\k}{f}}^2 \leq \sum_{\k \in \Fud} w^2(\k) \abs{\fc{\k}{f}}^2 \\
	&\leq \sum_{\k \in \Z^d} w^2(\k) \abs{\fc{\k}{f}}^2 = \norm{f}{\st(\T^d)}^2 < \infty.
	\end{align*}
\end{proof}

\begin{theorem}[Inheritance of smoothness for the weighted Wiener algebra]\label{thm:inheritance2}
	Let $f \in \aw(\T^d)$ with weight function $\fun{w}{\Z^d}{[1,\infty)}$. Then for any weight function $\fun{w_{\u}}{\Z^{\au}}{[1,\infty)}$ with
	\begin{equation*}
	w_{\u}(\bm{k}_{\u}) \leq w(\k) \,\forall \k \in \Pud
	\end{equation*}
	we have $\mathrm{P}_{\u} f \in \wiener{w_{\u}}(\T^{\au})$ and $f_{\u} \in \wiener{w_{\u}}(\T^{\au})$.
\end{theorem}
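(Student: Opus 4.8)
The plan is to mirror the proof of \cref{thm:inheritance1}, trading the quadratic sums for the absolute-value sums that define the weighted Wiener norm. A preliminary observation is that, since $w \geq 1$ everywhere, $\sum_{\k \in \Z^d} \abs{\fc{\k}{f}} \leq \sum_{\k \in \Z^d} w(\k) \abs{\fc{\k}{f}} = \norm{f}{\aw(\T^d)} < \infty$, so the Fourier series of $f$ converges absolutely; in particular $f \in \Lt(\T^d)$ and we may freely invoke \cref{lemma:anova:projections} and \cref{lemma:anova:terms}. The same argument applied to any candidate function $g$ with $\norm{g}{\wiener{w_{\u}}(\T^{\au})} < \infty$ shows that $g$ agrees almost everywhere with its absolutely convergent Fourier series and hence lies in $\mathrm{L}_1(\T^{\au})$; thus, to prove membership in $\wiener{w_{\u}}(\T^{\au})$, it suffices to bound the norm.

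For the projection I would use the $\au$-dimensional Fourier series of $\mathrm{P}_{\u}f$ together with \cref{lemma:anova:projections}, which identifies $\fc{\b\ell}{\mathrm{P}_{\u}f}$ with $\fc{\k}{f}$ for the unique $\k \in \Pud$ having $\k_{\u} = \b\ell$:
\begin{align*}
	\norm{\mathrm{P}_{\u}f}{\wiener{w_{\u}}(\T^{\au})} &= \sum_{\b\ell \in \Z^{\au}} w_{\u}(\b\ell)\,\abs{\fc{\b\ell}{\mathrm{P}_{\u}f}} = \sum_{\k \in \Pud} w_{\u}(\k_{\u})\,\abs{\fc{\k}{f}} \\
	&\leq \sum_{\k \in \Pud} w(\k)\,\abs{\fc{\k}{f}} \leq \sum_{\k \in \Z^d} w(\k)\,\abs{\fc{\k}{f}} = \norm{f}{\aw(\T^d)} < \infty,
\end{align*}
where the first inequality is the hypothesis $w_{\u}(\k_{\u}) \leq w(\k)$, valid on $\Pud$. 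For the ANOVA term the argument is the same but driven by \cref{lemma:anova:terms}: that lemma says $\fc{\b\ell}{f_{\u}}$ vanishes as soon as $\b\ell$ has a zero component (the only exception being the constant term when $\u = \emptyset$), so the sum over $\Z^{\au}$ collapses to a sum over $(\Z\setminus\{0\})^{\au}$, i.e.\ over $\k \in \Fud$, on which $\fc{\b\ell}{f_{\u}} = \fc{\k}{f}$; hence
\begin{align*}
	\norm{f_{\u}}{\wiener{w_{\u}}(\T^{\au})} = \sum_{\k \in \Fud} w_{\u}(\k_{\u})\,\abs{\fc{\k}{f}} \leq \sum_{\k \in \Fud} w(\k)\,\abs{\fc{\k}{f}} \leq \norm{f}{\aw(\T^d)} < \infty.
\end{align*}
The degenerate case $\u = \emptyset$ is immediate, since then $f_{\emptyset} = \fc{\b 0}{f}$ is constant and its norm is $w_{\emptyset}(\b 0)\,\abs{\fc{\b 0}{f}} \leq w(\b 0)\,\abs{\fc{\b 0}{f}} < \infty$.

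I do not expect a genuine obstacle: the whole proof is a one-line estimate in each case. The only things to keep straight are (i) that the relevant coefficient transfer is governed by $\Pud$ for the projection and by $\Fud$ for the ANOVA term, and (ii) the separate (trivial) treatment of the constant term; both are already supplied by \cref{lemma:anova:projections} and \cref{lemma:anova:terms}. A minor point worth stating explicitly is the reduction to bounding the norm, i.e.\ that finiteness of the Wiener norm forces membership in $\mathrm{L}_1(\T^{\au})$, so that no additional integrability check is needed.
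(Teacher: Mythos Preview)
Your proposal is correct and follows essentially the same argument as the paper: identify the Fourier coefficients via \cref{lemma:anova:projections} and \cref{lemma:anova:terms}, use the hypothesis $w_{\u}(\k_{\u}) \leq w(\k)$ on $\Pud$ (resp.\ $\Fud$), and bound by the full $\aw$-norm. Your extra remarks on $f \in \Lt(\T^d)$, the $\u=\emptyset$ case, and the $\mathrm{L}_1$-membership reduction are harmless clarifications that the paper leaves implicit.
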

\begin{proof}
	We use \cref{lemma:anova:projections} to show that $\mathrm{P}_{\u} f \in \aw(\T^{\au})$
	\begin{align*}
		\sum_{\bm \ell \in \Z^{\au}} w_{\u}(\b\ell) \abs{\fc{\ell}{f_{\u}}} &= \sum_{\k \in \Pud} w_{\u}(\k_{\u}) \abs{\fc{k}{f}} \leq \sum_{\k \in \Pud} w(\k) \abs{\fc{k}{f}}\\
		&\leq \sum_{\k \in \Z^d} w(\k) \abs{\fc{k}{f}} = \norm{f}{\aw(\T^d)} < \infty.
	\end{align*} 
	We utilize \cref{lemma:anova:terms} to prove $f_{\u} \in \wiener{w_{\u}}(\T^{\au})$
	\begin{align*}
		\sum_{\bm \ell \in \Z^{\au}} w_{\u}(\b\ell) \abs{\fc{\ell}{f_{\u}}} &= \sum_{\k \in \Fud} w_{\u}(\k_{\u}) \abs{\fc{k}{f}} \leq \sum_{\k \in \Fud} w(\k) \abs{\fc{k}{f}} \\
		&\leq \sum_{\k \in \Z^d} w(\k) \abs{\fc{k}{f}} = \norm{f}{\aw(\T^d)} < \infty.
	\end{align*}
\end{proof}

The inheritance of smoothness has special significance with regard to the numerical realization of the method presented in \Cref{sec:anova:approxmethod}. It ensures that the ANOVA terms $f_{\u}$ are at least as smooth as the function $f$ in consideration which is relevant for the quality of the approximation produced by the method.

\section{Truncated ANOVA decomposition}\label{sec:trunc}

The number of ANOVA terms of a function is equal to the cardinality of $\mathcal{P}(\D)=2^d$ and therefore grows exponentially in the dimension. This reflects the curse of dimensionality in a certain way and poses a problem for the approximation of a function. In this section we consider truncating the ANOVA decomposition, i.e., removing certain terms $f_{\u}$, and therefore creating a certain form of sparsity. We define a \textbf{subset of ANOVA terms} as a subset of the power set of $\D$, i.e., $U \subset \mathcal{P}(D)$, such that the inclusion condition
\begin{equation}\label{eq:trunc_inclusion}
\u \in U \Longrightarrow \forall \v\subset \u \colon \v \in U
\end{equation}
holds, cf.\ \cite[Chapter 3.2]{Holtz11}. This is necessary due to the recursive definition of the ANOVA terms, see \eqref{eq:anova:term}. 

For any subset of ANOVA terms $U$ we then define the truncated ANOVA decomposition as 
\begin{equation*}
\mathrm{T}_{U} f \coloneqq \sum_{\u\in U} f_{\u}.
\end{equation*}
A specific truncation idea can be obtained by relating to the superposition dimension $\dsuper$, see \eqref{eq:anova:superposition_specific}. For a chosen superposition threshold $d_s \in \D$ (that may or may not be equal to the superposition dimension $\dsuper$), we define $U_{d_s} \coloneqq \{ \u \subset \D \colon \abs{\u} \leq d_s \}$ and $\mathrm{T}_{d_s} \coloneqq \mathrm{T}_{U_{d_s}}$. We subsequently prove properties of both $\mathrm{T}_{U}$ in general and $\mathrm{T}_{d_s}$ in particular.
\begin{lemma}\label{lemma:anova:trunc_coeff}
	Let $f \in \Lt(\T^d)$ and $U \subset \mathcal{P}(D)$ be a subset of ANOVA terms. Then $\mathrm{T}_U f\in \Lt(\T^d)$ and for $\k \in \Z^d$ the Fourier coefficient is 
	\begin{equation*}
	\fc{\k}{\mathrm{T}_U f} = \begin{cases}
	\fc{\k}{f} \,\,\,&\colon \exists \u \in U \colon \k \in \Fud \\
	0 &\colon \text{otherwise}.
	\end{cases}
	\end{equation*}
\end{lemma}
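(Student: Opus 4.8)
The plan is to reduce the statement to the already-established decomposition facts about the sets $\Fud$ and the ANOVA terms $f_{\u}$. First I would recall that $\mathrm{T}_U f = \sum_{\u \in U} f_{\u}$ is a finite sum (since $U \subset \mathcal{P}(\D)$ is finite) of functions $f_{\u} \in \Lt(\T^{\au}) \subset \Lt(\T^d)$ by \cref{lemma:anova:terms}; hence $\mathrm{T}_U f \in \Lt(\T^d)$ immediately, and its Fourier coefficients are obtained termwise, $\fc{\k}{\mathrm{T}_U f} = \sum_{\u \in U} \fc{\k}{f_{\u}}$, where each $f_{\u}$ is viewed as the $d$-dimensional Fourier series $f_{\u}(\x) = \sum_{\k \in \Fud} \fc{\k}{f}\,\e^{2\pi\i\scal{k}{x}}$ supported on $\Fud$ (again \cref{lemma:anova:terms}).

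The key structural input is \cref{lemma:anova:sets}: the sets $\Fud$, $\u \subseteq \D$, are pairwise disjoint. Therefore, for a fixed $\k \in \Z^d$ there is at most one $\u \in U$ with $\k \in \Fud$. If such a $\u$ exists, then $\fc{\k}{f_{\u}} = \fc{\k}{f}$ and $\fc{\k}{f_{\v}} = 0$ for all other $\v \in U$ (since $\k \notin \Fset{\v}{d}$), so the termwise sum collapses to $\fc{\k}{f}$. If no such $\u \in U$ exists, then $\k \notin \Fud$ for every $\u \in U$, so every summand $\fc{\k}{f_{\u}}$ vanishes and $\fc{\k}{\mathrm{T}_U f} = 0$. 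This yields exactly the claimed case distinction. I would write this out in a short displayed computation, being careful to invoke \cref{lemma:anova:terms} in the $d$-dimensional form so that the support statement "$\fc{\k}{f_{\u}} = \fc{\k}{f}$ iff $\k \in \Fud$, else $0$" is directly available.

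There is essentially no hard part here; the only point requiring a modicum of care is the justification that Fourier coefficients of a finite sum are the sums of the Fourier coefficients, which is routine by linearity of the integral, and the implicit use of the fact that the $d$-dimensional representation of $f_{\u}$ from \cref{lemma:anova:terms} really does have $\fc{\k}{f_{\u}}$ supported on $\Fud$ with value $\fc{\k}{f}$ there (in particular the $\k = \b 0$ subtlety: $\b 0 \in \Fset{\emptyset}{d}$, and $\fc{\b 0}{f_{\emptyset}} = \fc{\b 0}{f}$ with $\emptyset \in U$ forced by the inclusion condition \eqref{eq:trunc_inclusion} whenever $U \neq \emptyset$, so this case is consistent). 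The inclusion condition \eqref{eq:trunc_inclusion} on $U$ is not actually needed for this particular lemma — it matters for later statements about $\mathrm{T}_U$ being a genuine partial ANOVA decomposition — so I would not belabor it. The whole proof should be three or four lines: cite \cref{lemma:anova:sets} for disjointness, cite \cref{lemma:anova:terms} for the termwise coefficients, and combine.
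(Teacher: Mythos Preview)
Your proposal is correct and takes essentially the same approach as the paper: both arguments use linearity of the Fourier coefficient together with \cref{lemma:anova:terms} to identify the support and value of $\fc{\k}{f_{\u}}$, and then the disjointness of the $\Fud$ (\cref{lemma:anova:sets}) to collapse the sum. The paper writes out the integral explicitly before invoking \cref{lemma:anova:terms}, whereas you invoke the $d$-dimensional series representation of $f_{\u}$ directly; this is a purely presentational difference.
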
\begin{proof}
	Clearly, we have $\mathrm{T}_U f\in \Lt(\T^d)$. Let now $\k \in \Z^d$. Then there exists a $\u_0 \subset \D$ such that $\k \in \mathbb{F}_{\u_0}^{(d)}$. We employ \cref{lemma:anova:terms} and obtain
	\begin{align*}
	\fc{\k}{\mathrm{T}_U f} &= \int_{\T^d} \left( \sum_{\u\in U} f_{\u}(\x_{\u}) \right) \e^{2\pi\i\k\cdot\x} \d\x = \sum_{\u\in U} \int_{\T^d} f_{\u}(\x_{\u}) \e^{2\pi\i\k\cdot\x} \d\x \\
	&= \sum_{\u\in U} \int_{\T^{\au}} f_{\u}(\x_{\u}) \e^{2\pi\i\k_{\u}\cdot\x_{\u}} \d\x_{\u}\,\delta_{\k_{\u},\b 0} = \sum_{\u\in U} \fc{\k}{f} \,\delta_{\k_{\uc},\b 0}\,(1-\delta_{\k_{\u},\b 0}) \\
	&= \begin{cases}
	\fc{\k}{f} \,\,\,&\colon \u_0 \in U \\
	0 &\colon \text{otherwise}.
	\end{cases}
	\end{align*}
\end{proof}
\begin{corollary}\label{cor:trunc_coeff:ds}
	Let $f \in \Lt(\T^d)$ and $d_s \in \D$ a superposition threshold. Then $\mathrm{T}_{d_s}f\in \Lt(\T^d)$ and only the Fourier coefficients corresponding to $d_s$-sparse frequencies are nonzero, i.e.,  
	\begin{equation*}
	\fc{\k}{\mathrm{T}_{d_s}f} = \begin{cases}
	\fc{\k}{f} \,\,\,&\colon \left\| \k \right\|_0 \leq d_s \\
	0 &\colon \text{otherwise}.
	\end{cases}
	\end{equation*}
\end{corollary}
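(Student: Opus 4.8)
The plan is to derive \cref{cor:trunc_coeff:ds} as a direct specialization of \cref{lemma:anova:trunc_coeff} applied to the particular subset of ANOVA terms $U_{d_s} = \{ \u \subset \D \colon \au \leq d_s \}$. First I would note that $U_{d_s}$ is indeed a valid subset of ANOVA terms in the sense of \eqref{eq:trunc_inclusion}, since $\v \subset \u$ implies $\av \leq \au \leq d_s$; this justifies writing $\mathrm{T}_{d_s} = \mathrm{T}_{U_{d_s}}$ and invoking the lemma. The membership $\mathrm{T}_{d_s} f \in \Lt(\T^d)$ is then immediate from \cref{lemma:anova:trunc_coeff}.

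The heart of the argument is the translation of the condition ``$\exists \u \in U_{d_s} \colon \k \in \Fud$'' into the condition $\norm{\k}{0} \leq d_s$. The key observation, already used implicitly in the proof of \cref{lemma:anova:sets}, is that for any $\k \in \Z^d$ there is exactly one set $\u \subseteq \D$ with $\k \in \Fud$, namely $\u = \supp \k \coloneqq \{ j \in \D \colon k_j \neq 0 \}$, and for this set $\au = \norm{\k}{0}$. Hence $\k$ lies in some $\Fud$ with $\u \in U_{d_s}$ if and only if $\supp \k \in U_{d_s}$, which by definition of $U_{d_s}$ means $\abs{\supp \k} = \norm{\k}{0} \leq d_s$. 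Plugging this equivalence into the case distinction of \cref{lemma:anova:trunc_coeff} yields precisely the claimed formula for $\fc{\k}{\mathrm{T}_{d_s} f}$.

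I do not anticipate a genuine obstacle here; the statement is essentially a reformulation. The only point requiring a little care is making the uniqueness and identification of the set $\u$ with $\supp \k$ explicit, since \cref{lemma:anova:sets} only states that $\Z^d$ is the (disjoint) union of the $\Fud$ — one should spell out that the unique $\u$ containing a given $\k$ is $\supp\k$ and that its cardinality is the $0$-``norm.'' With that remark in place the corollary follows in two or three lines.

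\begin{proof}
	Since $\v \subset \u$ implies $\av \leq \au$, the set $U_{d_s} = \{ \u \subset \D \colon \au \leq d_s \}$ satisfies the inclusion condition \eqref{eq:trunc_inclusion} and is therefore a subset of ANOVA terms with $\mathrm{T}_{d_s} = \mathrm{T}_{U_{d_s}}$. By \cref{lemma:anova:trunc_coeff} we obtain $\mathrm{T}_{d_s} f \in \Lt(\T^d)$. For $\k \in \Z^d$, \cref{lemma:anova:sets} guarantees a unique $\u \subseteq \D$ with $\k \in \Fud$; by the definition of $\Fud$ this set is $\u = \{ j \in \D \colon k_j \neq 0 \}$, so that $\au = \norm{\k}{0}$. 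Consequently, there exists $\u \in U_{d_s}$ with $\k \in \Fud$ if and only if $\norm{\k}{0} = \au \leq d_s$. Substituting this equivalence into the case distinction of \cref{lemma:anova:trunc_coeff} gives
	\begin{equation*}
	\fc{\k}{\mathrm{T}_{d_s}f} = \begin{cases}
	\fc{\k}{f} \,\,\,&\colon \left\| \k \right\|_0 \leq d_s \\
	0 &\colon \text{otherwise}.
	\end{cases}
	\end{equation*}
\end{proof}
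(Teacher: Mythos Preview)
Your proof is correct and follows essentially the same approach as the paper: both apply \cref{lemma:anova:trunc_coeff} to $U_{d_s}$ and then observe the equivalence $\exists\,\u \in U_{d_s}\colon \k \in \Fud \Longleftrightarrow \norm{\k}{0} \leq d_s$. Your version is simply more explicit about why $U_{d_s}$ satisfies the inclusion condition and why the unique $\u$ with $\k\in\Fud$ is $\supp\k$, whereas the paper states the equivalence in a single line.
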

\begin{proof}
	Since $U_{d_s}$ is a subset of ANOVA terms, $\mathrm{T}_{d_s}f\in \Lt(\T^d)$ follows directly from \cref{lemma:anova:trunc_coeff}. Moreover, $\exists \u \in U_{d_s} \colon \k \in \Fud \Longleftrightarrow \left\| \k \right\|_0 \leq d_s$.
\end{proof}

The following lemma shows that the number of terms in $U_{d_s}$ is polynomial in $d$ for a fixed $d_s$ and therefore allows us to circumvent the curse of dimensionality in terms of the number of sets. \begin{lemma}\label{lem:polycard}
	We estimate the cardinality of $\abs{U_{d_s}}$ as follows
	\begin{equation*}
	\abs{U_{d_s}} < \left(\frac{\e d}{d_s}\right)^{d_s},
	\end{equation*}
	i.e., the number of terms in $U_{d_s}$ has polynomial growth in $d$ for fixed $d_s \in \D\setminus\{d\}$.
\end{lemma}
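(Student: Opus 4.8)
The statement to prove is the bound $\abs{U_{d_s}} < (\e d / d_s)^{d_s}$. Since $U_{d_s} = \{\u \subset \D : \abs{\u} \leq d_s\}$, we have exactly $\abs{U_{d_s}} = \sum_{k=0}^{d_s} \binom{d}{k}$. So the whole thing reduces to a standard estimate of a partial sum of binomial coefficients. The plan is: first rewrite $\abs{U_{d_s}}$ as that partial sum; then bound the partial sum of binomials by $(\e d/d_s)^{d_s}$ via the classical trick of multiplying each term $\binom{d}{k}$ by $(d_s/d)^{k - d_s} \geq 1$ (valid since $k \leq d_s$ and $d_s \leq d$ so $d_s/d \leq 1$ and the exponent $k - d_s \leq 0$), extending the sum to all $k$ from $0$ to $d$, applying the binomial theorem, and finishing with $(1+x)^d \leq \e^{xd}$.

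Concretely, I would write
\begin{align*}
\abs{U_{d_s}} = \sum_{k=0}^{d_s} \binom{d}{k} \leq \sum_{k=0}^{d_s} \binom{d}{k} \left(\frac{d_s}{d}\right)^{k-d_s} \leq \left(\frac{d}{d_s}\right)^{d_s} \sum_{k=0}^{d} \binom{d}{k} \left(\frac{d_s}{d}\right)^{k} = \left(\frac{d}{d_s}\right)^{d_s} \left(1 + \frac{d_s}{d}\right)^{d},
\end{align*}
and then use $(1 + d_s/d)^d \leq \e^{d_s}$ to conclude $\abs{U_{d_s}} \leq (\e d / d_s)^{d_s}$. To get the strict inequality one notes that the step $\sum_{k=0}^{d_s} \to \sum_{k=0}^{d}$ adds strictly positive terms whenever $d_s < d$, which is exactly the hypothesis $d_s \in \D \setminus \{d\}$; alternatively the bound $(1+x)^d < \e^{xd}$ is itself strict for $x > 0$. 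Finally, the polynomial-growth claim follows because for fixed $d_s$ the right-hand side is $(\e/d_s)^{d_s} \cdot d^{d_s}$, a constant times $d^{d_s}$.

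**Main obstacle.** There is essentially no hard step here; the only thing to be careful about is justifying the two inequalities cleanly — that $(d_s/d)^{k-d_s} \geq 1$ for $0 \leq k \leq d_s$ (which needs $0 < d_s \leq d$, true since $d_s \in \D$), and that the extension of the summation range is what produces the strict inequality under the assumption $d_s \neq d$. If one instead wanted the strictness to come from $(1+x)^d < \e^{xd}$, one should note this holds for any $x > 0$ and $d \geq 1$, so it even covers the case $d_s = d$; but since the lemma explicitly restricts to $d_s \in \D \setminus \{d\}$ for the polynomial-growth interpretation, either justification is fine. I would present the proof in the three displayed steps above plus one sentence on strictness and one on polynomial growth.
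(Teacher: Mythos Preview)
Your proof is correct and complete. It differs from the paper's argument, though both are short and standard.

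The paper instead bounds $\binom{d}{n} \leq d^n/n!$, multiplies and divides by $d_s^n$, pulls out $(d/d_s)^{d_s}$ using $(d/d_s)^n \leq (d/d_s)^{d_s}$ for $n \leq d_s$, and then bounds the remaining partial sum $\sum_{n=0}^{d_s} d_s^n/n!$ strictly by the full Taylor series $\e^{d_s}$. Your route is the ``Chernoff-style'' trick: insert the factor $(d_s/d)^{k-d_s} \geq 1$, extend the sum to $0 \leq k \leq d$, close it with the binomial theorem to $(1+d_s/d)^d$, and finish with $(1+x)^d \leq \e^{xd}$. The paper's approach is slightly more elementary in that it never needs the full binomial expansion, only the crude bound $\binom{d}{n} \leq d^n/n!$; your approach has the mild advantage that strictness is immediate from either of two independent steps (the extension of the summation range when $d_s < d$, or the strict inequality $(1+x)^d < \e^{xd}$ for $x>0$), whereas in the paper strictness comes only from the Taylor-series truncation. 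Both arguments are equally acceptable here.
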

\begin{proof}
    We estimate the sum as follows
    \begin{align*}
    	\abs{U_{d_s}} = \sum_{n=0}^{d_s} \binom{d}{n} \leq \sum_{n=0}^{d_s} \frac{d^n d_s^n}{n!\, d_s^n} = \sum_{n=0}^{d_s} \left(\frac{d}{d_s}\right)^n \frac{d_s^n}{n!} \leq \left(\frac{d}{d_s}\right)^{d_s} \sum_{n=0}^{d_s} \frac{d_s^n}{n!}.
    \end{align*}
    Estimating the sum by the Taylor series for $\e^{d_s}$ yields the statement.
\end{proof}

In the following we show direct formulas for the truncated ANOVA decomposition based on the projections similarly as for the ANOVA terms, see \eqref{eq:ANOVA:direct}. \begin{lemma}\label{lemma:anova:direct_trunc}
	Let $f \in \Lt(\T^d)$ and $U \subset \mathcal{P}(D)$ a subset of ANOVA terms. Then we have the direct formula
	\begin{equation*}
	\mathrm{T}_U f = \sum_{\u \in U} \sum_{\substack{\v \in U \\ \u \subset \v}} (-1)^{\av-\au} \mathrm{P}_{\u} f.
	\end{equation*}
\end{lemma}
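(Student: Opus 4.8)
The plan is to substitute the direct formula for each ANOVA term from \cref{lemma:anova:direct} into the definition of $\mathrm{T}_U f$ and then reorganize the double sum so that the projection operators are collected together. Explicitly, starting from $\mathrm{T}_U f = \sum_{\u \in U} f_{\u}$ and using \eqref{eq:ANOVA:direct}, we get
\begin{equation*}
\mathrm{T}_U f = \sum_{\u \in U} \sum_{\v \subset \u} (-1)^{\au - \av} \mathrm{P}_{\v} f.
\end{equation*}
The key observation is that since $U$ is a subset of ANOVA terms satisfying the inclusion condition \eqref{eq:trunc_inclusion}, whenever $\u \in U$ and $\v \subset \u$ we automatically have $\v \in U$. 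Hence the inner sum over $\v \subset \u$ can be rewritten as a sum over $\v \in U$ with $\v \subset \u$ without changing anything. This gives
\begin{equation*}
\mathrm{T}_U f = \sum_{\u \in U} \sum_{\substack{\v \in U \\ \v \subset \u}} (-1)^{\au - \av} \mathrm{P}_{\v} f.
\end{equation*}

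Next I would exchange the order of summation. The pair $(\u, \v)$ ranges over all pairs in $U \times U$ with $\v \subset \u$; summing first over $\v \in U$ and then over $\u \in U$ with $\u \supset \v$ gives
\begin{equation*}
\mathrm{T}_U f = \sum_{\v \in U} \sum_{\substack{\u \in U \\ \v \subset \u}} (-1)^{\au - \av} \mathrm{P}_{\v} f.
\end{equation*}
Finally, relabeling the roles of $\u$ and $\v$ (swap the names) yields exactly the claimed identity
\begin{equation*}
\mathrm{T}_U f = \sum_{\u \in U} \sum_{\substack{\v \in U \\ \u \subset \v}} (-1)^{\av - \au} \mathrm{P}_{\u} f.
\end{equation*}

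There is no real obstacle here; the proof is a direct manipulation of finite sums. The one point that genuinely needs the hypothesis on $U$ — and which I would state carefully — is the step where the inner summation index set $\{\v : \v \subset \u\}$ is replaced by $\{\v \in U : \v \subset \u\}$: this equality of index sets is precisely the content of the inclusion condition \eqref{eq:trunc_inclusion}, so invoking \eqref{eq:trunc_inclusion} explicitly at that moment makes the argument clean. Everything else is bookkeeping: applying \cref{lemma:anova:direct} termwise (legitimate since each $f_{\u} \in \Lt$ by \cref{lemma:anova:terms} and the sum is finite), and Fubini for finite sums to interchange the order of summation. I would also note at the start that $\mathrm{T}_U f \in \Lt(\T^d)$ so that the manipulations take place in a well-defined space, although this was already established in \cref{lemma:anova:trunc_coeff}.
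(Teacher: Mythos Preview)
Your proof is correct and follows essentially the same approach as the paper's own proof: apply \cref{lemma:anova:direct}, use the inclusion condition \eqref{eq:trunc_inclusion} to restrict the inner sum to $U$, and swap the order of summation. The paper compresses these steps into a single chain of equalities using an indicator $\delta_{\v\subset\u}$ and omits the final relabeling, but the content is identical.
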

\begin{proof}
	We apply equation \eqref{eq:ANOVA:direct} and obtain immediately
	\begin{align*}
	\mathrm{T}_U f &= \sum_{\u \in U} f_{\u} = \sum_{\u \in U} \sum_{\v \subset \u} (-1)^{\au-\av} \mathrm{P}_{\v} f = \sum_{\u \in U} \sum_{\v \in U} (-1)^{\au-\av} \mathrm{P}_{\v} f\,\delta_{\v\subset\u} \\
	&= \sum_{\v \in U} \sum_{\substack{\u \in U \\ \v \subset \u}} (-1)^{\au-\av}\mathrm{P}_{\v}f.
	\end{align*}
\end{proof}
\begin{corollary}\label{cor:trunc:direct_sup}
	Let $f \in \Lt(\T^d)$ and $d_s \in \D$ a superposition threshold. Then we have the direct formula
	\begin{equation*}
	\mathrm{T}_{d_s} f = \sum_{\substack{\u \subset \D \\ \au \leq d_s}} \left[ \sum_{n=\au}^{d_s} (-1)^{n-\au} \binom{d-\au}{n-\au}  \right] \mathrm{P}_{\u} f.
	\end{equation*}
\end{corollary}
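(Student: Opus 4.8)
The plan is to start from the general direct formula for the truncated ANOVA decomposition in \cref{lemma:anova:direct_trunc} and specialize it to the case $U = U_{d_s} = \{ \u \subset \D \colon \au \le d_s \}$. By that lemma,
\begin{equation*}
\mathrm{T}_{d_s} f = \mathrm{T}_{U_{d_s}} f = \sum_{\u \in U_{d_s}} \left( \sum_{\substack{\v \in U_{d_s} \\ \u \subset \v}} (-1)^{\av - \au} \right) \mathrm{P}_{\u} f,
\end{equation*}
so the task reduces to evaluating, for each fixed $\u$ with $\au \le d_s$, the inner coefficient $\sum_{\v \colon \u \subset \v,\, \av \le d_s} (-1)^{\av - \au}$, and showing it equals $\sum_{n = \au}^{d_s} (-1)^{n - \au} \binom{d - \au}{n - \au}$.

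The key step is a counting argument: for a fixed $\u \subset \D$ with $\au = a$, the sets $\v$ with $\u \subset \v \subset \D$ and $\av = n$ are in bijection with the $(n-a)$-element subsets of $\D \setminus \u$, which has $d - a$ elements; hence there are exactly $\binom{d-a}{n-a}$ such $\v$. Grouping the inner sum by the cardinality $n = \av$, which ranges over $a, a+1, \dots, d_s$ (the upper limit being $d_s$ because $\v \in U_{d_s}$), gives
\begin{equation*}
\sum_{\substack{\v \in U_{d_s} \\ \u \subset \v}} (-1)^{\av - \au} = \sum_{n = a}^{d_s} \sum_{\substack{\v \subset \D,\ \u \subset \v \\ \av = n}} (-1)^{n - a} = \sum_{n = a}^{d_s} (-1)^{n-a} \binom{d-a}{n-a},
\end{equation*}
which is exactly the bracketed factor in the claimed formula with $a = \au$. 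Substituting this back into the expression for $\mathrm{T}_{d_s} f$ yields the statement.

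I expect no genuine obstacle here; the only point requiring a little care is the bookkeeping of the summation range, namely that $\au \le n = \av \le d_s$ throughout (so that both $\u$ and $\v$ lie in $U_{d_s}$), and confirming that the binomial coefficient $\binom{d-\au}{n-\au}$ correctly counts the intermediate sets $\v$ with $\u \subsetneq \v$ or $\u = \v$. One could alternatively note that when $d_s = d$ the inner coefficient collapses to $\delta_{\u,\D}$ by \cref{sup:lem:help} (up to the sign convention), recovering $\mathrm{T}_d f = f$ consistently with \cref{thm:anova_decomp}, which serves as a useful sanity check but is not needed for the proof.
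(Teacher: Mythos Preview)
Your proof is correct and follows essentially the same approach as the paper: apply \cref{lemma:anova:direct_trunc} with $U = U_{d_s}$ and reduce the inner sum to the binomial expression by grouping over $n = \av$. The paper simply asserts the identity $\sum_{\v \in U_{d_s},\, \u \subset \v} (-1)^{\av-\au} = \sum_{n=\au}^{d_s} (-1)^{n-\au} \binom{d-\au}{n-\au}$ and invokes the lemma, whereas you spell out the counting argument behind it; your extra justification and sanity check are fine but not required.
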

\begin{proof}
	Since the equality
	\begin{equation*}
	\sum_{\substack{\v \in U_{d_s} \\ \u \subset \v}} (-1)^{\av-\au} = \sum_{n=\au}^{d_s} (-1)^{n-\au} \binom{d-\au}{n-\au},
	\end{equation*}
	holds, we employ \cref{lemma:anova:direct_trunc} and the formula is proven.	
\end{proof}

The truncated ANOVA decomposition plays a major role in our approximation approach presented in \Cref{sec:anova:approxmethod}. Therefore we are interested in functions that can be approximated well by a truncated ANOVA decomposition. Specifically, we are looking to characterize functions such that the truncation operation by $\mathrm{T}_U f$ for different sets $U$ retains most of the function, i.e., we have a relative error \begin{equation}\label{eq:trunc_err}
    \frac{\norm{f-\mathrm{T}_U f}{H_1}}{\norm{f}{H_2}} < \varepsilon 
\end{equation}
with $\varepsilon > 0$, and $H_1, H_2$ certain subspaces of $\Lt(\T^d)$. It is especially interesting to characterize these functions by properties like the smoothness. To this end, we start by proving general bounds for Sobolev type spaces $\st(\T^d)$ and the weighted Wiener algebra $\aw(\T^d)$ to later relate this to weight functions $w$ defined by specific kinds of smoothness.

Moreover, this can be related to the superposition dimension $\dsuper$ for a $\delta \in (0,1]$, see \eqref{eq:anova:superposition_specific}. Let $H_1 = \L_2(\T^d)$ and $H_2 \in \{ \sobolev{w}(\T^d), \wiener{w}(\T^d)\}$ for a weight function $w$. If we choose truncation by a superposition threshold $d_s \in \D$ then the bound on the right-hand side $\varepsilon(d_s) \in (0,1)$ depends on $d_s$. Moreover, we have \begin{equation}\label{eq:specificds}
	\sup_{f \neq 0} \frac{\norm{f-\mathrm{T}_{d_s} f}{\L_2(\T^d)}^2}{\norm{f}{H_2}^2} = \sup_{\norm{f}{H_2} \leq 1} \sum_{\au > d_s} \norm{f_{\u}}{\L_2(\T^d)}^2 < \varepsilon(d_s) 
\end{equation} which follows from $\norm{f-\mathrm{T}_{d_s} f}{\L_2(\T^d)}^2 = \sum_{\au > d_s} \norm{f_{\u}}{\L_2(\T^d)}^2$. The modified superposition dimension $\dsuper$ will now be smaller or equal to $\min \{ d_s \in \D \colon \varepsilon(d_s) \leq 1 - \delta \}$, i.e., truncation by this minimum as superposition threshold is guaranteed to be effective in relation to $\delta$.

\begin{theorem}\label{theorem:anova:l2_error}
	Let $f \in \st(\T^d)$ with weight function $\fun{w}{\Z^d}{[1,\infty)}$. Then
	\begin{equation*}
	\frac{\norm{f-\mathrm{T}_U f}{\Lt(\T^d)}}{\norm{f}{\st(\T^d)}} \leq \frac{1}{\min_{\k\in\bigcup_{\substack{\u \subseteq \D \\ \u \notin U}}\Fud} w(\k)} .
	\end{equation*}
\end{theorem}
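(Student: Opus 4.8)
The plan is to work directly with the Fourier coefficients of $f - \mathrm{T}_U f$ and to exploit the fact that this difference is supported precisely on the frequency index sets $\Fud$ with $\u \notin U$. By \cref{lemma:anova:trunc_coeff}, the Fourier coefficient $\fc{\k}{\mathrm{T}_U f}$ equals $\fc{\k}{f}$ whenever $\k$ lies in some $\Fud$ with $\u \in U$, and vanishes otherwise; since $\Z^d$ is the disjoint union of the $\Fud$ by \cref{lemma:anova:sets}, this means $\fc{\k}{f - \mathrm{T}_U f} = \fc{\k}{f}$ for $\k \in \bigcup_{\u \notin U} \Fud$ and $\fc{\k}{f - \mathrm{T}_U f} = 0$ otherwise. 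Equivalently, $f - \mathrm{T}_U f = \sum_{\u \notin U} f_{\u}$, which one could also phrase via the orthogonality of the ANOVA terms (\cref{cor:ortho}).

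Next I would estimate the $\Lt(\T^d)$-norm of the remainder using Parseval:
\begin{equation*}
\norm{f - \mathrm{T}_U f}{\Lt(\T^d)}^2 = \sum_{\k \in \bigcup_{\u \notin U} \Fud} \abs{\fc{\k}{f}}^2.
\end{equation*}
The idea is then to insert the weight function artificially: for each such $\k$ we bound $\abs{\fc{\k}{f}}^2 \le w^2(\k) \abs{\fc{\k}{f}}^2 / w^2(\k)$, and since $w \ge 1$ we have $1/w^2(\k) \le 1/(\min_{\bm j \in \bigcup_{\u \notin U} \Fud} w(\bm j))^2$ for every $\k$ in the remainder support. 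Pulling this constant out of the sum gives
\begin{equation*}
\norm{f - \mathrm{T}_U f}{\Lt(\T^d)}^2 \le \frac{1}{\left(\min_{\k \in \bigcup_{\u \notin U} \Fud} w(\k)\right)^2} \sum_{\k \in \bigcup_{\u \notin U} \Fud} w^2(\k) \abs{\fc{\k}{f}}^2 \le \frac{\norm{f}{\st(\T^d)}^2}{\left(\min_{\k \in \bigcup_{\u \notin U} \Fud} w(\k)\right)^2},
\end{equation*}
where the last step extends the sum over all of $\Z^d$. Taking square roots and dividing by $\norm{f}{\st(\T^d)}$ yields the claimed bound.

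The argument is essentially a one-line weighted-Parseval estimate, so I do not expect a serious obstacle; the only points requiring a little care are: checking that the complement $\bigcup_{\u \notin U, \u \subseteq \D} \Fud$ really is the exact support of the remainder (this is where the disjointness in \cref{lemma:anova:sets} and the case distinction in \cref{lemma:anova:trunc_coeff} are needed), and noting that the minimum in the denominator is well-defined and strictly positive — it is bounded below by $1$ since $w$ maps into $[1,\infty)$, so the bound is always finite (and trivially $\le 1$). If $U = \mathcal{P}(\D)$ the remainder support is empty and the statement is vacuous, which is consistent with interpreting the empty sum as $0$. One subtlety worth a remark is that the infimum over an infinite set of frequencies need not be attained, so strictly one should write $\inf$ rather than $\min$; the bound is valid either way.
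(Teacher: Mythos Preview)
Your proof is correct and follows essentially the same route as the paper: identify the Fourier support of $f-\mathrm{T}_U f$ via \cref{lemma:anova:trunc_coeff} (together with the disjoint decomposition of $\Z^d$ from \cref{lemma:anova:sets}), apply Parseval, insert $w^2(\k)/w^2(\k)$, pull out the minimum, and bound the remaining sum by $\norm{f}{\st(\T^d)}^2$. Your additional remarks on the empty case $U=\mathcal{P}(\D)$ and on $\inf$ versus $\min$ are valid side observations but not needed for the argument.
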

\begin{proof}
	We employ Parseval's identity and \cref{lemma:anova:trunc_coeff} to derive
	\begin{align*}
	\norm{f-\mathrm{T}_U f}{\Lt(\T^d)}^2 &= \sum_{\k\in\Z^d} \abs{\fc{\k}{f} - \fc{k}{\mathrm{T}_U f} }^2 = \sum_{\k\in\bigcup_{\substack{\u \subseteq \D \\ \u \notin U}}\Fud} \abs{\fc{\k}{f} }^2 \\
	&=  \sum_{\k\in\bigcup_{\substack{\u \subseteq \D \\ \u \notin U}}\Fud} \frac{w^2(\k)}{w^2(\k)}\abs{\fc{\k}{f} }^2 \\&\leq \frac{1}{\min_{\k\in\bigcup_{\substack{\u \subseteq \D \\ \u \notin U}}\Fud} w^2(\k)} \norm{f}{\st(\T^d)}^2.
	\end{align*}
\end{proof}

\begin{theorem}\label{theorem:anova:inf_error_1}
	Let $f \in \aw(\T^d)$ with weight function $\fun{w}{\Z^d}{[1,\infty)}$. Then
	\begin{equation*}
	\frac{\norm{f-\mathrm{T}_U f}{\mathrm{L}_\infty(\T^d)}}{\norm{f}{\aw(\T^d)}} \leq \frac{1}{\min_{\k\in\bigcup_{\substack{\u \subseteq \D \\ \u \notin U}}\Fud} w(\k)}.
	\end{equation*}
	For $f \in \st(\T^d)$ with a weight function $\fun{w}{\Z^d}{[1,\infty)}$ such that $\{1/w(\k)\}_{\k\in\Z^d} \in \ell_2$ we have
	\begin{equation*}
		\norm{f-\mathrm{T}_U f}{\mathrm{L}_\infty(\T^d)} \leq \sqrt{\sum_{\k\in\bigcup_{\substack{\u \subseteq \D \\ \u \notin U}}\Fud} \frac{1}{w^2(\k)}} \,\,\norm{f}{\st(\T^d)}.
	\end{equation*}
\end{theorem}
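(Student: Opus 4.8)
The plan is to bound the sup-norm by the $\ell_1$-norm of the Fourier coefficients of the error $f - \mathrm{T}_U f$, in direct analogy with the $\L_2$-estimate of \cref{theorem:anova:l2_error}. The starting point for the first inequality is the elementary fact that for any $g$ with absolutely convergent Fourier series, $\norm{g}{\mathrm{L}_\infty(\T^d)} \leq \sum_{\k \in \Z^d} \abs{\fc{\k}{g}}$, since each $\e^{2\pi\i\scal{k}{x}}$ has modulus one. Applying this to $g = f - \mathrm{T}_U f$ and invoking \cref{lemma:anova:trunc_coeff}, the coefficients of $g$ are exactly $\fc{\k}{f}$ for $\k$ in the union $\bigcup_{\u \subseteq \D,\, \u \notin U} \Fud$ and zero otherwise, so $\norm{f - \mathrm{T}_U f}{\mathrm{L}_\infty(\T^d)} \leq \sum_{\k \in \bigcup_{\u \notin U} \Fud} \abs{\fc{\k}{f}}$.

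From here the argument mirrors the $\L_2$ case: insert $w(\k)/w(\k)$ into each summand, pull out the minimal weight, and recognize the remaining sum as bounded by $\norm{f}{\aw(\T^d)} = \sum_{\k \in \Z^d} w(\k)\abs{\fc{\k}{f}}$. This gives the first claimed bound. One should note that $f \in \aw(\T^d)$ is exactly what guarantees the absolute convergence needed to justify the sup-norm estimate in the first place, so the hypothesis is used twice in essence.

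For the second statement, the function lies only in $\st(\T^d)$, so I would instead apply the Cauchy--Schwarz inequality to the sum $\sum_{\k \in \bigcup_{\u \notin U}\Fud} \abs{\fc{\k}{f}}$, splitting each term as $\frac{1}{w(\k)} \cdot w(\k)\abs{\fc{\k}{f}}$. This yields $\norm{f-\mathrm{T}_U f}{\mathrm{L}_\infty(\T^d)} \leq \big(\sum_{\k \in \bigcup_{\u \notin U}\Fud} \tfrac{1}{w^2(\k)}\big)^{1/2} \big(\sum_{\k\in\Z^d} w^2(\k)\abs{\fc{\k}{f}}^2\big)^{1/2}$, the second factor being precisely $\norm{f}{\st(\T^d)}$. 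The assumption $\{1/w(\k)\}_{\k\in\Z^d} \in \ell_2$ ensures the first factor is finite (and a fortiori that $f$ has an absolutely convergent Fourier series, so the sup-norm bound is legitimate).

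The only mild subtlety — and the step I would be most careful about — is the passage from a Fourier series to a pointwise sup-norm bound: it requires knowing that the series converges absolutely and uniformly so that $f$ (or its relevant modification $f - \mathrm{T}_U f$) genuinely equals its Fourier series everywhere and the triangle inequality over the series is valid. In the first part this is immediate from $f \in \aw(\T^d)$; in the second it follows from Cauchy--Schwarz together with $\{1/w(\k)\} \in \ell_2$. Everything else is a routine repetition of the weight-extraction trick already used in \cref{theorem:anova:l2_error}.
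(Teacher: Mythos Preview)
Your proof is correct and follows essentially the same route as the paper: bound $\norm{f-\mathrm{T}_U f}{\mathrm{L}_\infty}$ by the $\ell_1$-sum of its Fourier coefficients, identify the surviving coefficients via \cref{lemma:anova:trunc_coeff}, then either extract the minimal weight (first bound) or apply Cauchy--Schwarz (second bound). Your added remark about absolute convergence justifying the pointwise sup-norm estimate is a nice point of rigor that the paper leaves implicit.
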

\begin{proof}
	We estimate the $\mathrm{L}_\infty$-norm by the sum of the absolute values of the Fourier coefficients and then use \cref{lemma:anova:trunc_coeff}
	\begin{align}
	\norm{f-\mathrm{T}_U f}{\mathrm{L}_\infty(\T^d)} &\leq \sum_{\k\in\Z^d} \abs{\fc{\k}{f} - \fc{k}{\mathrm{T}_U f} } = \sum_{\k\in\bigcup_{\substack{\u \subseteq \D \\ \u \notin U}}\Fud} \abs{\fc{\k}{f} } \nonumber \\
	&=  \sum_{\k\in\bigcup_{\substack{\u \subseteq \D \\ \u \notin U}}\Fud} \frac{w(\k)}{w(\k)}\abs{\fc{\k}{f} } \label{eq:star}\\&\leq \frac{1}{\min_{\k\in\bigcup_{\substack{\u \subseteq \D \\ \u \notin U}}\Fud} w(\k)} \norm{f}{\aw(\T^d)}. \nonumber
	\end{align}
	Employing the Cauchy-Schwarz inequality in \eqref{eq:star} instead of extracting the minimum yields 
	\begin{equation*}
		\norm{f-\mathrm{T}_U f}{\mathrm{L}_\infty(\T^d)} \leq \sqrt{\sum_{\k\in\bigcup_{\substack{\u \subseteq \D \\ \u \notin U}}\Fud} \frac{1}{w^2(\k)}} \,\,\norm{f}{\st(\T^d)}.
	\end{equation*}
	The condition $\{1/w(\k)\}_{\k\in\Z^d} \in \ell_2$ assures that the sum which appears in the bound is finite.
\end{proof}

In the following, we relate the truncation of $f$ by the operator $\mathrm{T}_{d_s}$ with the smoothness of $f$. To this end, we introduce the weights
\begin{equation}\label{eq:weights}
	w^{\alpha,\beta}(\k) \coloneqq \gamma_{\supp \k}^{-1} \, (1+\norm{\k}{1})^\alpha \prod_{s \in \supp \k} (1+\abs{k_s})^{\beta}
\end{equation}
with $\supp \k = \{ i \in \D \colon k_i \neq 0 \}$ and parameters $\beta \geq 0$, and $\alpha > -\beta$. The parameters $\alpha, \beta$, and the weight $\gamma_{\u}$, $\u\subset\D$, regulate the decay of the Fourier coefficients. Specifically, the parameter $\alpha$ is regulating the isotropic smoothness and $\beta$ the dominating mixed smoothness, cf. \cite{DuTeUl16}. Moreover, $\b\gamma$ controls the influence of the different dimensions. We choose a POD (\textit{product and order-dependent}) structure for $\gamma_{\u}$ such that \begin{equation}\label{eq:pod}
    \gamma_{\u} = \Gamma_{\au} \prod_{s \in \u} \gamma_s,
\end{equation}where $\b\Gamma \in (0,1]^{d}$ is nonincreasing and $\b\gamma = (\gamma_i)_{i=1}^{d} \in (0,1]^d$. The POD structure is motivated by the application of quasi-Monte Carlo methods for PDEs with random coefficients, cf.\ \cite{KuSchwSl12, GrKuNi14, KuNu16, GrKuNu18}. Similar weights for isotropic and dominating mixed smoothness have been considered in \cite{GriHa13, KaPoVo13, ByKaUlVo16}. Moreover, the Sobolev type spaces may also be referred to as weighted Korobov spaces, cf. \cite{SlWo01} for product weights and \cite{DiSlWaWo06} for general weights.

We now use the previously obtained bounds for general weight functions $w$ and derive results for the weights $w^{\alpha,\beta}$ from \eqref{eq:weights}. We focus on the subsets of ANOVA terms $U_{d_s}$ defined by a superposition threshold $d_s \in \D$.
\begin{corollary}\label{cor:bound:1}
	Let $f \in \wiener{w^{\alpha,\beta}}(\T^d)$ with weight function from \eqref{eq:weights} with POD structure \eqref{eq:pod}, $\beta \geq 0$, $\alpha > -\beta$, $\b\Gamma \in (0,1]^{d}$, and $\b\gamma \in (0,1]^d$. Then
	\begin{equation}\label{eq:trunc:bound_2}
	\frac{\norm{f-\mathrm{T}_{d_s} f}{\mathrm{L}_\infty(\T^d)}}{\norm{f}{\wiener{w^{\alpha,\beta}}(\T^d)}} \leq \Gamma_{d_s+1} \, (2+d_s)^{-\alpha}\,2^{-\beta (d_s+1)} \prod_{s=1}^{d_s+1} \gamma_s^{\ast} 
	\end{equation}
	where $\b{\gamma}^{\ast}$ is the non-increasing rearrangement of $\b\gamma$.
\end{corollary}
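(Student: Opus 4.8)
The starting point is Theorem~\ref{theorem:anova:inf_error_1}, applied with $U = U_{d_s}$ and $w = w^{\alpha,\beta}$. That theorem gives
\[
\frac{\norm{f-\mathrm{T}_{d_s} f}{\mathrm{L}_\infty(\T^d)}}{\norm{f}{\wiener{w^{\alpha,\beta}}(\T^d)}} \leq \frac{1}{\min_{\k\in\bigcup_{\substack{\u \subseteq \D \\ \u \notin U_{d_s}}}\Fud} w^{\alpha,\beta}(\k)},
\]
so the entire corollary reduces to computing (a lower bound for) the minimum of $w^{\alpha,\beta}(\k)$ over all frequencies $\k$ with $\norm{\k}{0} \geq d_s+1$, since $\u \notin U_{d_s}$ exactly when $\abs{\u} > d_s$, i.e. when $\au \geq d_s+1$.

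**Key steps.** First I would fix a frequency $\k$ with $r \coloneqq \norm{\k}{0} = \abs{\supp\k} \geq d_s+1$ and write, using the POD form \eqref{eq:pod},
\[
w^{\alpha,\beta}(\k) = \Gamma_r^{-1} \Big(\prod_{s\in\supp\k}\gamma_s\Big)^{-1} (1+\norm{\k}{1})^\alpha \prod_{s\in\supp\k}(1+\abs{k_s})^\beta.
\]
Each $\abs{k_s}\geq 1$ on the support, so $(1+\abs{k_s})^\beta \geq 2^\beta$ for all $r$ support indices (using $\beta\geq 0$), giving a factor $\geq 2^{\beta r}$; and $\norm{\k}{1}\geq r$, so $(1+\norm{\k}{1})^\alpha \geq (1+r)^\alpha$ when $\alpha\geq 0$. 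The case $-\beta < \alpha < 0$ needs a moment's care: there one still has $(1+\norm{\k}{1})^\alpha(1+\abs{k_s})^{\beta}$-type products bounded below because $\alpha+\beta>0$ controls the joint behaviour — concretely $(1+\norm{\k}{1})^\alpha \geq (1+\abs{k_s})^\alpha$ is false, so instead I would bound $(1+\norm{\k}{1})^\alpha \prod_{s}(1+\abs{k_s})^\beta \geq (1+r)^\alpha 2^{\beta r}$ directly from $\norm{\k}{1}\geq r$ and $\abs{k_s}\geq 1$, which is valid for any sign of $\alpha$ once we note $(1+\norm{\k}{1})^\alpha$ is decreasing in $\norm{\k}{1}$ for $\alpha<0$ and $\norm{\k}{1}$ can be arbitrarily large — so in fact the infimum over $\k$ of that piece, for $\alpha<0$, is approached as $\norm{\k}{1}\to\infty$, where the $2^{\beta r}$ from the product still dominates because the product grows. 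This is the delicate point. For $\alpha<0$ one must combine the two products: $(1+\norm{\k}{1})^\alpha\prod_{s\in\supp\k}(1+\abs{k_s})^\beta$; writing $\norm{\k}{1}\leq r\max_s\abs{k_s}$ is the wrong direction, so instead use $(1+\norm{\k}{1}) \leq \prod_{s\in\supp\k}(1+\abs{k_s})$ (true since expanding the product dominates $1+\sum\abs{k_s}$), hence $(1+\norm{\k}{1})^\alpha \geq \big(\prod_s(1+\abs{k_s})\big)^\alpha$ for $\alpha<0$, and then
\[
(1+\norm{\k}{1})^\alpha\prod_{s\in\supp\k}(1+\abs{k_s})^\beta \geq \prod_{s\in\supp\k}(1+\abs{k_s})^{\alpha+\beta} \geq 2^{(\alpha+\beta)r},
\]
using $\alpha+\beta>0$. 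Combined with the $\alpha\geq 0$ estimate this always yields a bound of the form $(1+r)^{\min(\alpha,0)}$-free — I would simply state the clean unified bound $w^{\alpha,\beta}(\k) \geq \Gamma_r^{-1}\big(\prod_{s\in\supp\k}\gamma_s\big)^{-1}(2+r)^{-|\alpha|\cdot[\alpha<0]}\,2^{\cdots}$, but to match the corollary's right-hand side exactly, note the target is $\Gamma_{d_s+1}(2+d_s)^{-\alpha}2^{-\beta(d_s+1)}\prod_{s=1}^{d_s+1}\gamma_s^\ast$, so the book-keeping must track $r=d_s+1$ specifically.

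**Finishing.** With $r\geq d_s+1$: since $\b\Gamma$ is nonincreasing, $\Gamma_r^{-1}\geq\Gamma_{d_s+1}^{-1}$; since each $\gamma_s\leq 1$ and there are $r\geq d_s+1$ of them, $\big(\prod_{s\in\supp\k}\gamma_s\big)^{-1} \geq \big(\prod_{s=1}^{d_s+1}\gamma_s^\ast\big)^{-1}$ where $\b\gamma^\ast$ is the non-increasing rearrangement (the product of the $d_s+1$ largest $\gamma_s$ is the largest possible product of $d_s+1$ of them, hence its reciprocal is the smallest such reciprocal, a valid lower bound). For the remaining factor, $(1+\norm{\k}{1})^\alpha\prod(1+\abs{k_s})^\beta$: when $\alpha\geq 0$ it is $\geq (1+r)^\alpha 2^{\beta r}\geq (2+d_s)^\alpha 2^{\beta(d_s+1)}$ using monotonicity in $r$; when $\alpha<0$ it is $\geq 2^{(\alpha+\beta)r}$, and one checks $2^{(\alpha+\beta)r}\geq (2+d_s)^\alpha 2^{\beta(d_s+1)}$ — this reduces to $2^{(\alpha+\beta)(r-d_s-1)}\cdot 2^{\alpha(d_s+1)}\geq(2+d_s)^\alpha$, i.e. (taking logs, $\alpha<0$) to $(\alpha+\beta)(r-d_s-1)\geq \alpha\log_2(2+d_s) - \alpha(d_s+1) = \alpha\log_2\frac{2+d_s}{2^{d_s+1}}$, and the right side is negative while the left is nonnegative, so it holds. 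Assembling, $\min_{\k\in\bigcup_{\u\notin U_{d_s}}\Fud} w^{\alpha,\beta}(\k) \geq \Gamma_{d_s+1}^{-1}(2+d_s)^\alpha 2^{\beta(d_s+1)}\big(\prod_{s=1}^{d_s+1}\gamma_s^\ast\big)^{-1}$, and taking reciprocals and substituting into Theorem~\ref{theorem:anova:inf_error_1} gives exactly \eqref{eq:trunc:bound_2}.

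**Main obstacle.** The only real subtlety is the sign-of-$\alpha$ case split in bounding $(1+\norm{\k}{1})^\alpha\prod_{s\in\supp\k}(1+\abs{k_s})^\beta$ from below uniformly over all $\k$ of a given support size, since for $\alpha<0$ the naive bound degrades as $\norm{\k}{1}$ grows; the fix is the elementary inequality $1+\norm{\k}{1}\leq\prod_{s\in\supp\k}(1+\abs{k_s})$, which converts the problematic isotropic factor into a bounded-below mixed factor using $\alpha+\beta>0$. Everything else is monotonicity bookkeeping with $\b\Gamma$, $\b\gamma$, and the rearrangement $\b\gamma^\ast$.
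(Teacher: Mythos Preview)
Your reduction to Theorem~\ref{theorem:anova:inf_error_1} and the identification of the relevant minimum is exactly the paper's route, and your handling of the $\alpha\geq 0$ case and of the $\b\Gamma$, $\b\gamma^\ast$ bookkeeping is correct. The gap is in the $\alpha<0$ verification. You claim that $\alpha\log_2\frac{2+d_s}{2^{d_s+1}}$ is negative; but $\frac{2+d_s}{2^{d_s+1}}\leq 1$ for every $d_s\geq 0$, so its logarithm is nonpositive, and multiplying by $\alpha<0$ makes the right-hand side \emph{nonnegative}. Concretely, take $d_s=1$, $r=d_s+1=2$, $\alpha=-1$, $\beta=2$: your lower bound for $(1+\norm{\k}{1})^\alpha\prod_{s\in\supp\k}(1+\abs{k_s})^\beta$ is $2^{(\alpha+\beta)r}=4$, whereas the target value $(2+d_s)^\alpha 2^{\beta(d_s+1)}=16/3>4$, so the comparison fails. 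The inequality $1+\norm{\k}{1}\leq\prod_{s\in\supp\k}(1+\abs{k_s})$ is true but too lossy for this purpose.

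The fix is to show that $(1+\norm{\k}{1})^\alpha\prod_{s\in\supp\k}(1+\abs{k_s})^\beta$, over $\k$ with a fixed support of size $r$, is actually minimized at $\abs{k_s}=1$ for all $s\in\supp\k$, even when $\alpha<0$. Writing $m_s=\abs{k_s}\geq 1$ and treating the variables as real, the partial derivative of the logarithm with respect to $m_j$ is $\frac{\alpha}{1+\sum_s m_s}+\frac{\beta}{1+m_j}$; since $1+\sum_s m_s\geq 1+m_j$ and $\alpha<0$, this is at least $\frac{\alpha+\beta}{1+m_j}>0$, so the expression is coordinatewise increasing and its minimum equals $(1+r)^\alpha 2^{\beta r}$. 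It then remains to check $(1+r)^\alpha 2^{\beta r}\geq (2+d_s)^\alpha 2^{\beta(d_s+1)}$ for all $r\geq d_s+1$; this follows because the ratio of consecutive values is $\big(\frac{2+r}{1+r}\big)^\alpha 2^\beta \geq 2^\alpha\cdot 2^\beta = 2^{\alpha+\beta}>1$. The paper's own proof simply asserts where the minimum is attained without isolating this $\alpha<0$ subtlety, so your instinct that something extra is needed there was right; only the execution slipped.
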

\begin{proof}
	We use \cref{theorem:anova:inf_error_1} and calculate the bound for the weight function $w^{\alpha,\beta,\b\gamma}$ by computing the minimum
	\begin{equation*}
		M \coloneqq \min_{\substack{\k\in\Z^d \\ \norm{\k}{0} > d_s}} \Gamma_{\left\| \k \right\|_0}^{-1}  (1+\left\| \k \right\|_1)^\alpha \prod_{s=1}^d (1+\abs{k_s})^\beta \prod_{s\in\supp \k}\gamma_s^{-1}.
	\end{equation*}
	Since $\b\Gamma$ is non-increasing by definition, $\Gamma_{d_s+1}^{-1}$ has to be equal to the smallest value. The frequencies in $\Fud$ have exactly $\au$ nonzero entries, therefore we get
	\begin{equation*}
		M = \Gamma_{d_s+1}^{-1} (1+d_s+1)^\alpha (1+1)^{\beta (d_s+1)} \min_{\substack{\k\in\Z^d \\ \norm{\k}{0} > d_s}} \prod_{s\in \u} \gamma_{s}^{-1}. 
	\end{equation*}
	The remaining product becomes minimal for the product of the $d_s+1$ smallest entries in $\b\gamma$ which yields the statement.
\end{proof}

\begin{lemma}\label{lemma:bounds:dim_estimate}
	Let $n \in \D$ and $\b\gamma \in (0,1]^d$. Then
	\begin{equation*}
	\sum_{\substack{\u \subseteq \D \\ \au = n}} \prod_{s\in\u} \gamma_s^2 \leq \left\| \b\gamma \right\|_2^{2n}.
	\end{equation*}
\end{lemma}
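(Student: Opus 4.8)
The plan is to identify the left-hand side with an elementary symmetric polynomial in the variables $\gamma_1^2,\dots,\gamma_d^2$ and to bound it by a power of their sum using the multinomial theorem. First I would set $x_s \coloneqq \gamma_s^2 \ge 0$ for $s \in \D$, so that $\norm{\b\gamma}{2}^{2n} = (x_1 + \dots + x_d)^n$ while the left-hand side reads $\sum_{\substack{\u \subseteq \D \\ \au = n}} \prod_{s\in\u} x_s$. Expanding $(x_1 + \dots + x_d)^n$ by the multinomial theorem produces a sum of nonnegative monomials; for every subset $\u \subseteq \D$ with $\au = n$ the product $\prod_{s\in\u} x_s$ appears among them with multinomial coefficient $n!/(1!\cdots 1!) = n! \ge 1$. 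Dropping all the remaining (nonnegative) terms therefore yields
\begin{equation*}
(x_1 + \dots + x_d)^n \;\ge\; n!\sum_{\substack{\u \subseteq \D \\ \au = n}} \prod_{s\in\u} x_s \;\ge\; \sum_{\substack{\u \subseteq \D \\ \au = n}} \prod_{s\in\u} x_s,
\end{equation*}
and substituting back $x_s = \gamma_s^2$ gives exactly the claimed inequality.

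I do not expect a genuine obstacle here: the argument uses only $\gamma_s^2 \ge 0$, so the hypothesis $\b\gamma \in (0,1]^d$ is not even needed, and the single point requiring a little care is the bookkeeping that the monomials of the multinomial expansion indexed by the size-$n$ subsets of $\D$ are precisely the multilinear ones of degree $n$, each carrying the coefficient $n!$.

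As an alternative I could run an induction on $d$, peeling off the last coordinate via $e_n(x_1,\dots,x_d) = e_n(x_1,\dots,x_{d-1}) + x_d\,e_{n-1}(x_1,\dots,x_{d-1})$, applying the two induction hypotheses $e_n(x_1,\dots,x_{d-1}) \le E^n$ and $e_{n-1}(x_1,\dots,x_{d-1}) \le E^{n-1}$ with $E \coloneqq x_1 + \dots + x_{d-1}$, and then using the binomial estimate $E^n + x_d\,E^{n-1} \le (E + x_d)^n$, valid since $n \ge 1$; but the multinomial route is the shortest and I would present that one.
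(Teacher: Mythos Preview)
Your proof is correct, and it proceeds by a genuinely different route than the paper. The paper rewrites the left-hand side as the nested sum
\[
\sum_{i_1=1}^{d} \gamma_{i_1}^2 \sum_{i_2=i_1+1}^{d} \gamma_{i_2}^2 \cdots \sum_{i_n=i_{n-1}+1}^{d} \gamma_{i_n}^2
\]
and then bounds each inner sum $\sum_{i_j = i_{j-1}+1}^d \gamma_{i_j}^2$ from above by the full sum $\norm{\b\gamma}{2}^2$, producing the $n$-fold product $\norm{\b\gamma}{2}^{2n}$ directly. Your argument instead recognises the left-hand side as the elementary symmetric polynomial $e_n(\gamma_1^2,\dots,\gamma_d^2)$ and compares it to $(\gamma_1^2+\dots+\gamma_d^2)^n$ via the multinomial expansion, which even gives the sharper intermediate bound with the extra factor $n!$. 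Both arguments rely only on the nonnegativity of the $\gamma_s^2$, so your remark that the hypothesis $\b\gamma\in(0,1]^d$ is not needed applies to the paper's proof as well; the nested-sum approach is perhaps more hands-on, while yours is slightly more conceptual and yields a tighter constant for free.
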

\begin{proof}
	We rewrite the sum as follows
	\begin{equation*}
	\sum_{\substack{\u \subseteq \D \\ \au = n}} \prod_{s\in\u} \gamma_s^2 = \sum_{i_1=1}^{d} \gamma_{i_1}^2 \sum_{i_2=i_1+1}^{d} \gamma_{i_2}^2 \cdots \sum_{i_n=i_{n-1}+1}^{d} \gamma_{i_n}^2.
	\end{equation*}
	Then every single sum can be estimated by $\left\| \b\gamma \right\|_2^{2}$, i.e., $$\sum_{i_j = i_{j-1}+1}^d \gamma_{i_j}^2 \leq \sum_{i_j = 1}^d \gamma_{i_j}^2 = \left\| \b\gamma \right\|_2^{2}$$ for $j \in \{2,3,\dots,d\}$ with equality for $j = 1$.
\end{proof}

\begin{corollary}\label{cor:bound:2}
	Let $f \in \mathrm{H}^{w^{\alpha,\beta}}(\T^d)$ with weight function from \eqref{eq:weights} with POD structure \eqref{eq:pod}, $\beta \geq 0$, $\alpha > -\beta$, $\b\Gamma \in (0,1]^{d}$, and $\b\gamma \in (0,1]^d$. Then
	\begin{equation}\label{eq:trunc:bound_1}
		\frac{\norm{f-\mathrm{T}_{d_s} f}{\Lt(\T^d)}}{\norm{f}{\mathrm{H}^{w^{\alpha,\beta}}(\T^d)}}\leq \Gamma_{d_s+1} \,(2+d_s)^{-\alpha}\,2^{-\beta (d_s+1)} \prod_{s=1}^{d_s+1} \gamma_s^{\ast}  
	\end{equation}
	where $\b{\gamma}^{\ast} = (\gamma_s^\ast)_{s=1}^d$ is the non-increasing rearrangement of $\b\gamma$. For functions with isotropic smoothness $\alpha = 0$ and dominating mixed smoothness $\beta > 1/2$ we have
	\begin{equation*}
		\frac{\norm{f-\mathrm{T}_{d_s} f}{\mathrm{L}_\infty(\T^d)}}{\norm{f}{\mathrm{H}^{w^{\alpha,\beta}}(\T^d)}} \leq \sqrt{\sum_{n=d_s+1}^{d} 2^{n} \Gamma_n^2 \left(\zeta(2\beta)-1\right)^{n} \left\| \b\gamma \right\|_2^{2n}}
	\end{equation*}	 
	where $\zeta$ is the Riemann zeta function. Exponential decay for $\Gamma_s$, i.e., $\Gamma_s = c^s$, $0 < c \leq 1$, such that the condition 
	\begin{equation}\label{eq:trunc:condition}
		\left\| \b\gamma \right\|_2 < \frac{1}{c\sqrt{2\zeta(2\beta)-2}}
	\end{equation}
	holds, yields the bound
	\begin{equation}\label{eq:trunc:inf_bound}
	\frac{\norm{f-\mathrm{T}_{d_s} f}{\mathrm{L}_\infty(\T^d)}}{\norm{f}{\mathrm{H}^{w^{\alpha,\beta}}(\T^d)}} \leq \frac{\left( c\left\| \b\gamma \right\|_2\sqrt{2\zeta(2\beta)-2} \right)^{d_s+1}}{\sqrt{1-2 c^{2} \left\| \b\gamma \right\|_2^{2}\left(\zeta(2\beta)-1\right) }}.
	\end{equation}	 
\end{corollary}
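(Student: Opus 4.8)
The plan is to establish the three claimed bounds in turn, each one reducing to one of the general estimates proved above for weighted spaces, combined with the elementary combinatorial estimate \cref{lemma:bounds:dim_estimate}.

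\textbf{First bound \eqref{eq:trunc:bound_1}.} I would invoke \cref{theorem:anova:l2_error} with $U = U_{d_s}$, for which $\bigcup_{\u\notin U}\Fud = \{\k\in\Z^d \colon \norm{\k}{0} > d_s\}$, and then compute the minimum of $w^{\alpha,\beta}$ over that set exactly as in the proof of \cref{cor:bound:1}: the non-increasing factor $\Gamma_{\norm{\k}{0}}^{-1}$ is smallest at $\norm{\k}{0}=d_s+1$; the factor $(1+\norm{\k}{1})^{\alpha}\prod_{s\in\supp \k}(1+\abs{k_s})^{\beta}$ is minimized over frequencies with $d_s+1$ nonzero entries by taking every such entry equal to $\pm 1$ (here $\beta\ge 0$ and $\alpha>-\beta$ guarantee this even when $\alpha<0$), giving $(2+d_s)^{\alpha}2^{\beta(d_s+1)}$; and $\prod_{s\in\supp \k}\gamma_s^{-1}$ is minimized by the $d_s+1$ smallest entries of $\b\gamma$, i.e.\ $\prod_{s=1}^{d_s+1}\gamma_s^{\ast}$. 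Passing to the reciprocal of this minimum yields \eqref{eq:trunc:bound_1}.

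\textbf{Second bound.} For $\alpha=0$, $\beta>1/2$ I would use the $\ell_2$-version of \cref{theorem:anova:inf_error_1} with $U=U_{d_s}$; its hypothesis $\{1/w^{0,\beta}(\k)\}_{\k\in\Z^d}\in\ell_2$ follows from the very computation below since the outer sum over the order is finite. It remains to bound $\sum_{\norm{\k}{0}>d_s}1/(w^{0,\beta}(\k))^{2}$. Using the disjoint decomposition $\Z^d=\bigcup_{\u\subseteq\D}\Fud$ from \cref{lemma:anova:sets} and grouping by $n=\au$, note that on $\Fud$ one has $\supp \k=\u$, so $\gamma_{\supp \k}^{2}=\Gamma_n^{2}\prod_{s\in\u}\gamma_s^{2}$, and the remaining sum over $\k\in\Fud\cong(\Z\setminus\{0\})^{\au}$ factorizes into $\au$ copies of $\sum_{k\in\Z\setminus\{0\}}(1+\abs{k})^{-2\beta}=2(\zeta(2\beta)-1)$. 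Hence $\sum_{\k\in\Fud}1/(w^{0,\beta}(\k))^{2}=\Gamma_n^{2}\,(2(\zeta(2\beta)-1))^{n}\prod_{s\in\u}\gamma_s^{2}$; summing over all $\u$ with $\au=n$ and applying \cref{lemma:bounds:dim_estimate} bounds $\sum_{\au=n}\prod_{s\in\u}\gamma_s^{2}$ by $\norm{\b\gamma}{2}^{2n}$, and summing $n$ from $d_s+1$ to $d$ and taking the square root gives the stated bound.

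\textbf{Third bound \eqref{eq:trunc:inf_bound}.} Substituting $\Gamma_n=c^{n}$ into the previous bound turns the $n$-th summand into $q^{n}$ with $q\coloneqq 2c^{2}\norm{\b\gamma}{2}^{2}(\zeta(2\beta)-1)$, and the hypothesis \eqref{eq:trunc:condition} is exactly $q<1$. Estimating the finite geometric sum $\sum_{n=d_s+1}^{d}q^{n}\le q^{d_s+1}/(1-q)$, taking the square root, and rewriting $q^{(d_s+1)/2}=(c\norm{\b\gamma}{2}\sqrt{2\zeta(2\beta)-2}\,)^{d_s+1}$ produces \eqref{eq:trunc:inf_bound}. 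I do not expect a real obstacle here; the one point requiring care is the factorization and re-indexing of the $\ell_2$-sum over $\Fud$—in particular keeping the order-dependent factor $\Gamma_n$ outside the product and correctly evaluating $\sum_{k\ne 0}(1+\abs{k})^{-2\beta}=2(\zeta(2\beta)-1)$—together with verifying the $\ell_2$-summability hypothesis of \cref{theorem:anova:inf_error_1}.
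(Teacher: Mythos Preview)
Your proposal is correct and follows essentially the same approach as the paper's own proof: invoking \cref{theorem:anova:l2_error} and the minimum computation from \cref{cor:bound:1} for \eqref{eq:trunc:bound_1}, then the $\ell_2$-version of \cref{theorem:anova:inf_error_1} together with the disjoint decomposition of \cref{lemma:anova:sets}, the factorization into one-dimensional sums $2(\zeta(2\beta)-1)$, and \cref{lemma:bounds:dim_estimate} for the second bound, and finally the geometric-series estimate with $q=2c^{2}\norm{\b\gamma}{2}^{2}(\zeta(2\beta)-1)$ for \eqref{eq:trunc:inf_bound}. One small verbal slip: to minimize $\prod_{s\in\supp\k}\gamma_s^{-1}$ you need the $d_s+1$ \emph{largest} entries of $\b\gamma$, which are exactly $\gamma_1^{\ast},\dots,\gamma_{d_s+1}^{\ast}$ in the non-increasing rearrangement; your formula is right even though the word ``smallest'' is not.
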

\begin{proof}
	The bound from statement \eqref{eq:trunc:bound_1} is a consequence of \cref{theorem:anova:l2_error} and can be calculated analogously to the proof of \cref{cor:bound:1}. For the second statement, we calculate the constant in the bound from \Cref{theorem:anova:inf_error_1}. We use \Cref{lemma:anova:sets} and the product structure of the weights $w^{\alpha,\beta}(\k)$ to obtain
	\begin{align*}
	\sum_{\k\in\bigcup_{\substack{\u \subseteq \D \\ \au > d_s}}\Fud} \frac{1}{w^2(\k)} &= \sum_{\substack{\u \subseteq \D \\ \au > d_s}} \sum_{\k\in\Fud} \frac{1}{\Gamma_{\au}^{-2}  \left(1+ \abs{k_s} \right)^{2\beta} \prod_{s\in\u} \gamma_s^{-2}} \\
	&= \sum_{\substack{\u \subseteq \D \\ \au > d_s}} \Gamma_{\u}^{2} \sum_{\k \in (\Z\setminus\{0\})^{\au}} \frac{1}{\left(\prod_{s\in\u} \gamma_s^{-2}\right)\left(\prod_{s=1}^{\au} (1+\abs{k_s})^{2\beta}\right)} \\
	&= \sum_{\substack{\u \subseteq \D \\ \au > d_s}} \Gamma_{\abs{\u}}^{2} \prod_{s\in\u} \gamma_s^2 \sum_{k\in\Z\setminus\{0\}} \frac{1}{ (1+\abs{k})^{2\beta}}.
	\end{align*}
	We find an explicit form by replacing the sums with the Riemann zeta function
	\begin{equation*}
	\prod_{s\in\u}   \gamma_s^2  \sum_{k\in\Z\setminus\{0\}} \frac{1}{ (1+k)^{2\beta}} = \prod_{s\in\u} 2  \gamma_s^2  \sum_{k\in\N} \frac{1}{ (1+k)^{2\beta}} = 2^{\au}  \left(\zeta(2\beta)-1\right)^{\au} \prod_{s\in\u} \gamma_s^2.
	\end{equation*}
	Applying \cref{lemma:bounds:dim_estimate} then gives us the upper bound
	\begin{equation*}
	\sum_{n=d_s+1}^{d} 2^{n} \Gamma_{n}^{2} \left(\zeta(2\beta)-1\right)^{n} \sum_{\substack{\u \subseteq \D \\ \au = n}} \prod_{s\in\u} \gamma_s^2 \leq \sum_{n=d_s+1}^{d} 2^{n} \Gamma_{n}^{2} \left(\zeta(2\beta)-1\right)^{n} \left\|\b \gamma \right\|_2^{2n}.
	\end{equation*}
	If we choose an exponential decay for $\Gamma_{n}$, i.e., $\Gamma_{n} \coloneqq c^{n}, 0 < c \leq 1$, the explicit upper bound becomes
	\begin{equation*}
	\sum_{n=d_s+1}^{d} 2^{n} c^{2n} \left(\zeta(2\beta)-1\right)^{n} \left\| \b\gamma \right\|_2^{2n} = \frac{q^{d_s+1}}{1-q} (1-q^{d-d_s})
	\end{equation*}
	where $q \coloneqq 2 c^{2} \left(\zeta(2\beta)-1\right) \left\| \b\gamma \right\|_2^{2}$ with $0 < q < 1$ because of the condition \eqref{eq:trunc:condition}.
\end{proof}

The bound in \Cref{cor:bound:1} and \eqref{eq:trunc:bound_1} in \Cref{cor:bound:2} are independent of the spatial dimensions $d$ of the functions $f$ as long as they have the same superposition threshold and the norm stays the same. This allows us to circumvent the curse of dimensionality here and use the ANOVA terms in $U_{d_s}$ for a superposition threshold $d_s \in \D$. The bound \eqref{eq:trunc:inf_bound} can also be considered for $d \rightarrow \infty$. The dependence on the dimension $d$ is contained within the norm $\left\| \b\gamma \right\|_2^{2}$. Choosing a square-summable sequence $\{\gamma_\ell\}_{\ell \in \N}$ results in an upper bound for $\left\| \b\gamma \right\|_2$ for any $d \rightarrow \infty$. In this case the bound can be made independent of $d$ by the condition \eqref{eq:trunc:condition}. 

\Cref{fig:trunc:bounds} shows the different bounds for weights $w^{\alpha,\beta}$ with $\b\gamma = (1/s)_{s=1}^9$ and $\b\Gamma = (\pi^{-s}\sqrt{3}^s )_{s=1}^9$, see \eqref{eq:weights}. With regard to the superposition dimension $\dsuper$ for $\sobolev{w^{\alpha,\beta}}(\T^d)$, cf.~\eqref{eq:anova:superposition_specific}, one may interpret this as follows: Given $f \in \sobolev{w^{\alpha,\beta}}(\T^d)$, the value $\varepsilon(\alpha,\beta) \in (0,1)$ of the bound in part (a) of \Cref{fig:trunc:bounds} tells us that for $\delta = 1 - \varepsilon(\alpha,\beta)^2$ the superposition dimension $\dsuper$ is smaller or equal to the superposition threshold $d_s = 3$, e.g., $\varepsilon(0,1) \approx 0.0008$ and therefore $\delta = 0.99999936$.

\begin{figure}
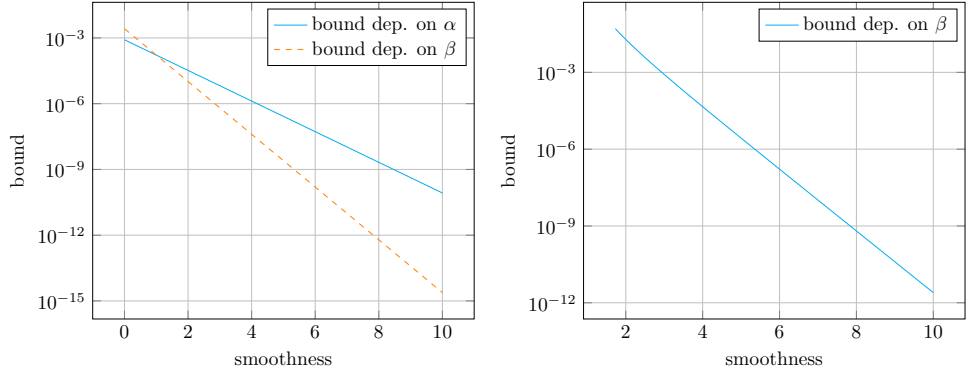
	
	\subfloat[bounds \eqref{eq:trunc:bound_1}, \eqref{eq:trunc:bound_2} for $0 \leq \alpha \leq 10$, $\beta = 1$ (solid), and for $\alpha = 1$, $0 \leq \beta \leq 10$ (dashed)]{

	}
	\caption{Decay of errors from \eqref{eq:trunc:bound_1}, \eqref{eq:trunc:inf_bound}, and \eqref{eq:trunc:bound_2} in relation to their isotropic smoothness $\alpha$ and dominating-mixed smoothness $\beta$ with $d = 9$, $d_s = 3$, dimension dependent coefficients $\b\gamma = (1/s)_{s=1}^9$ and order dependent coefficients $\b\Gamma = (\pi^{-s}\sqrt{3}^s )_{s=1}^9$.}
	\label{fig:trunc:bounds}
\end{figure}

\section{ANOVA approximation method}\label{sec:anova:approxmethod}

We consider the general problem of approximating a periodic function $\fun{f}{\T^d}{\C}$ given certain function evaluations of $f$. Specifically, we distinguish two approximation scenarios -- black-box approximation and scattered data approximation. In the case of black-box approximation, we are able to evaluate $f$ at any given point $\x \in \T^d$. Since the evaluations come at a certain cost, we aim to keep them minimal or require a certain trade-off. For scattered data approximation we have a finite set of nodes $X \subset \T^d$ and know the function values $\b y = (f(\x))_{\x\in X}$. Here, one cannot add more nodes to $X$ or choose the locations of the nodes. Both scenarios have a high relevance for problems in various applications. 

In this section, we consider an approximation scheme for high-dimensional, periodic functions of a low-dimensional structure, i.e., functions with a small superposition dimension $\dsuper \in \D$ for a $\delta \in (0,1]$ that is close to one, cf.~\eqref{eq:anova:superposition_specific}. In this case the truncation by $\mathrm{T}_{d_s}$ with a small superposition threshold $d_s \in \D$ will be effective. It has been observed that functions in many practical applications belong to such a class, see e.g.~\cite{CaMoOw97}. In \Cref{sec:trunc} we have considered errors for functions of dominating-mixed and isotropic smoothness defined trough the decay of the Fourier coefficients and therefore obtained an upper bound for the modified superposition dimension $\dsuper$ from \eqref{eq:anova:superposition_specific}. Considering \cref{fig:trunc:bounds}, we know that e.g.~POD weights lead to a decay such that the functions are of a low-dimensional structure.

The approximation scheme can be viewed in both approximation scenarios although the details are different. We work for now with the node set $X$ as well as function evaluations $\b y$ and keep in mind that $X$ may also be chosen if we are in the black-box case. The first step is to reduce the ANOVA decomposition to the terms in $U_{d_s}$, i.e., we approximate \begin{equation*}
    f \approx \Tds f = \sum_{\u \in U_{d_s}} f_{\u}.
\end{equation*} The Fourier coefficients $\fc{\k}{\Tds f}$ can only be nonzero if the frequency $\k$ is at most $d_s$-sparse, i.e., $\norm{\k}{0} \leq d_s$, see \cref{cor:trunc_coeff:ds}. Based on this, we aim to approximate $f$ by a Fourier partial sum $S_I f$ with a finite index set \begin{equation}\label{eq:index_set}
    I \subset \left\{ \k \in \Z^d \colon \supp \k \in U_{d_s} \right\}.
\end{equation} The challenge is to determine an appropriate index set $I$. To this end, we employ a special scheme to determine frequency locations based on the ANOVA terms and an importance ranking on them.

We call the first step \textit{active set detection} and its aim is to determine an importance ranking on the terms $f_{\u}$ with $\u \in U_{d_s}$ based on the global sensitivity indices $\gsi{\u}{f}$, cf.~\eqref{eq:anova:gsi}. This information is also highly relevant to interpret relations in our data $X$ and $\y$. 

Based on the sensitivity indices we build an active set of ANOVA terms $U \subset U_{d_s}$. This relates to the importance of frequencies and therefore information on how to choose the index set $I$ from \eqref{eq:index_set}. Reducing the number of ANOVA terms and in turn the number of frequencies leads to a reduction of the model complexity. The effects of overfitting are therefore lessened. In \Cref{sec:approx:active} we consider the details of the active set detection and in \Cref{sec:approx:final} the approximation with an active set as well as approximation errors.

\subsection{Active set detection}\label{sec:approx:active}

The method assumes that the underlying function $f$ is of a low-dimensional structure, i.e., $f \approx \Tds f$ for some superposition threshold $d_s \in \D$. The goal in the active set detection step is to determine an importance ranking for the ANOVA terms. In order to do this, we choose an appropriate search index set. Since we have no a-priori knowledge about the importance of the ANOVA terms or the smoothness of the function $f$, we work with order-dependent finite index sets $I_0 = \{ 0 \}, I_1 \subset (\Z\setminus\{0\}), \dots, I_{d_s} \subset (\Z\setminus\{0\})^{d_s}$. This achieves that two ANOVA terms $f_{\u}$ and $f_{\v}$ with $\au = \av$ are supported on equivalent index sets. We then use the projection operator \begin{equation}\label{eq:pre:proj}
	\mathrm{P}_{\u} I \coloneqq \{ \k\in\Z^d \colon \k_{\u} \in I, \k_{\uc} = \b 0 \}
\end{equation} to project the index sets and obtain \begin{equation}\label{eq:IU}
I(U_{d_s}) = \bigcup_{\u\in U_{d_s}}\mathrm{P}_{\u}I_{\au}. 
\end{equation} This leads to the approximation by a Fourier partial sum \begin{equation}\label{eq:approx:partial}
    f(\x) \approx \Tds f (\x) \approx S_{I(U_{d_s})} f (\x) = \Fseries{\k}{I(U_{d_s})}{\fc{\k}{f}}{\x}.
\end{equation}

The Fourier coefficients $\fc{\k}{f}$ in \eqref{eq:approx:partial} are unknown and we aim to determine approximations for them from the data $X$ and $\b y$. To this end, we consider the least-squares problem \begin{equation}\label{eq:approx:ls_ds}
    \bfh_{\text{sol}} = \argmin_{\bfh \in \C^{\abs{I(U_{d_s})}}} \norm{\y - \F_{I(U_{d_s})} \bfh}{2}^2
\end{equation}
with Fourier matrix $\F_{I(U_{d_s})} = \left( \e^{2\pi\i\k\cdot\x} \right)_{\x\in X,\k\in I(U_{d_s})}$. If the Fourier matrix has full rank, the elements of the solution vector $\bfh_{\text{sol}} = (\hat{f}_{\k})_{\k \in I(U_{d_s})}$ are the unique least-squares approximation to the Fourier coefficients, i.e., $\hat{f}_{\k} \approx \fc{\k}{f}$, with respect to $X$ and $\y$. Depending on the approximation scenario, there are different methods of solving least-squares problems of the type \eqref{eq:approx:ls_ds}. We refer to \Cref{sec:approx:ls} for details.

We use the approximate Fourier coefficients $\hat{f}_{\k}$ to build the approximate Fourier partial sum \begin{equation}\label{eq:approx:initial}
    S_{I(U_{d_s})} f (\x) \approx S_{I(U_{d_s})}^X f (\x) = \Fser{\k}{I(U_{d_s})}{\hat{f}_{\k}}{\x}
\end{equation}
which provides an initial approximation to the function $f$. In order to achieve a Fourier matrix $\F_{I(U_{d_s})}$ with full rank and combat the effects of overfitting, we may need to severely limit the number of frequencies in the order-dependent sets $I_1$, $I_2$, \dots, $I_{d_s}$. Details on this will be considered in the following subsections for the specific approximation scenarios.

In order to determine an importance ranking on the ANOVA terms, we assume that the global sensitivity indices of $S_{I(U_{d_s})}^X f$ and $f$ behave similarly, i.e., it holds that \begin{equation}\label{assumption}
    \gsi{\u_1}{S_{I(U_{d_s})}^X f} \leq \gsi{\u_2}{S_{I(U_{d_s})}^X f} \Longrightarrow \gsi{\u_1}{f} \leq \gsi{\u_2}{f}
\end{equation} for $\u_1, \u_2 \in U_{d_s}$. This allows us to use a threshold vector $\b\varepsilon \in [0,1]^{d_s}$ to define an active set of ANOVA terms that only contains the \textit{important} terms with respect to $\b\varepsilon$
\begin{equation}\label{eq:approx:active_set}
	U_{X,\y}^{(\b\eps)} \coloneqq \{ \v \subset \D \colon \exists \u \in U_{d_s} \colon \v \subset \u ~\text{and}~ \gsi{\u}{S_{I(U_{d_s})}^X f} > \varepsilon_{\au} \}.
\end{equation}
The inclusion condition \eqref{eq:trunc_inclusion} is fulfilled by definition. We reduce the ANOVA decomposition to this set of terms to determine an approximation for $f$ in \Cref{sec:approx:final}.

\subsection{Least-squares approximation}\label{sec:approx:ls}

In this section, we discuss the solution of least-squares problems of the form \begin{equation}\label{eq:approx:gen_ls}
    \min_{\bfh \in \C^{\abs{I(U)}}} \norm{\y - \F_{I(U)} \bfh}{2}^2
\end{equation} 
with a Fourier matrix $\F_{I(U)} = \left( \e^{2\pi\i\k\cdot\x} \right)_{\x\in X,\k\in I(U)}$. Here, $U$ is an arbitrary subset of ANOVA terms and for each term we have a given finite frequency index set $I_{\u} \subset (\Z\setminus\{0\})^{\au}$. The set \begin{equation}\label{set:IUfull}
	I(U) = \bigcup_{\u\in U}\mathrm{P}_{\u}I_{\u}
\end{equation} is obtained through the projections \eqref{eq:pre:proj}. 

The following remark shows that the Fourier matrix can be structured with respect to the ANOVA terms. Moreover, we can decompose the matrix-vector multiplications with both, $\F_{I(U)}$ and its adjoint $\F_{I(U)}^\ast$. \begin{remark}\label{rem:approx:struct}
	Let $\F_{I(U)}$ be a Fourier matrix with respect to a node set $X$ and an index set $I(U)$ with a subset of ANOVA terms $U \subset \mathcal{P}(D)$ and index sets $I_{\u} \subset (\Z\setminus\{0\})^{\au}$, $\u \in U$. Then \begin{equation*}
	\b F \bfh = \left( \b{F}_{\u_1} \,\,\, \b{F}_{\u_2} \,\,\, \cdots \,\,\, \b{F}_{\u_n} \right) \bfh
	\end{equation*} 
	where $\u_1, \u_2, \dots, \u_n$ with $n = {\abs{U}}$ is a numbering of the subsets of coordinate indices in $U$ such that $\hat{\b f} = \left( \hat{\b f}_{\u_1} \,\,\, \hat{\b f}_{\u_2} \,\,\, \cdots \,\,\, \hat{\b f}_{\u_n} \right)^\top$. The Fourier matrices are $\b{F}_{\u} = \left( \e^{2\pi\i\b\ell\cdot\x_{\u}} \right)_{\x\in X,\b\ell\in I_{\u}}$. The matrix-vector product with $\b F$ can therefore be decomposed as
	\begin{equation*}
	\b F \bfh = \sum_{\u \in U} \b{F}_{\u} \bfh_{\u}
	\end{equation*} with vector components $\bfh_{\u}$. For the adjoint product $\b{F}^\ast \b f$ with a vector $\b f \in \C^{\abs{X}}$ we obtain the result $\hat{\b a} \in \C^{\abs{I(U)}}$ by computing the products \begin{equation*}
	\hat{\b a}_{\u} = \b{F}_{\u}^\ast \b f,\quad \forall\u \in U.
	\end{equation*} 
	Then we have the result vector $\hat{\b a} = \left( \hat{\b a}_{\u_1} \,\,\, \hat{\b a}_{\u_2} \,\,\, \cdots \,\,\, \hat{\b a}_{\u_n} \right)^\top$.
\end{remark}

\subsubsection{Black-box scenario}\label{sec:approx:blackbox} 

In the case of black-box approximation, i.e., the set $X$ can be chosen, we have to determine an appropriate special discretization for index sets of the type $I(U)$. Here, we have different possibilities. One might think of rank-1 lattices that have been used for integration before, see e.g.~\cite{JoKuSl13}, and approximation, see e.g.~\cite{KaPoVo13, KaVo19}. For a general introduction to lattice rules, we refer to \Cref{sec:pre:lattice}. Sparse grid sampling related to the Smolyak algorithm is a further possibility, cf.~\cite{Gr05, He03, HeLe11, Holtz11}.

In the following, we focus on using reconstructing single rank-1 lattice for function approximation. If we have a reconstructing single rank-1 lattice $\Lambda(\b z, M, I(U)) \subset \Z^d$ for a generating vector $\b z \in \Z^d$ and size $M \in \N$ with respect to an index set $I(U)$, then \begin{equation}\label{eq:approx:lat_prop}
    \F_{I(U)}^\ast \F_{I(U)} = M \cdot \mathrm{I}
\end{equation} with $\mathrm I$ the identity matrix, see \cite[Chapter 8.2]{PlPoStTa18}. Then the solution to problem \eqref{eq:approx:gen_ls} is unique and given by the multiplication of the Moore-Penrose inverse $\F_{I(U)}^\dagger$ with $\b y$, see e.g.~\cite{Bj96}. Through the property \eqref{eq:approx:lat_prop} the Moore-Penrose inverse is simplified to \begin{equation}\label{eq:approx:mpinv}
    \F_{I(U)}^\dagger = \frac{1}{M} \F_{I(U)}^\ast,
\end{equation} i.e., a multiplication with the adjoint matrix. This allows us to efficiently compute approximations for the Fourier coefficients of $f$ if the nodes form a reconstructing rank-1 lattice.

It remains the issue of determining such a reconstructing rank-1 lattice given an index set of type $I(U)$. In \cite[Theorem 8.16]{PlPoStTa18} it was shown that reconstructing lattices exist if the lattice size $M$ is sufficiently large. Since the evaluations of $f$ come at a certain cost, it is necessary to consider the lattice size for our special types of index sets which we do in the following.

An important quantity to get estimations on the lattice size is the difference set $\D(I(U))$ from \eqref{eq:pre:di} since the result \cite[Theorem 8.16]{PlPoStTa18} tells us that there exists a reconstructing rank-1 lattice with prime cardinality \begin{equation*}
\abs{I(U)} \leq M \leq \abs{\D(I(U))}.
\end{equation*} In the following, we proof properties and show estimates on the cardinality of both $I(U)$ and $\D(I(U))$.

\begin{lemma}\label{lemma:est_di}
	Let $U \subset \mathcal{P}(\D)$ be a subset of ANOVA terms and $I_{\u} \subset \Z^{\au}$, $\u \in U$, finite symmetric frequency sets. Then we have \begin{equation*}
		\D(I(U)) = \bigcup_{\substack{\u \in U \\ \v \subset \u}} \{ \k-\b h \colon \k \in \mathrm{P}_{\u}I_{\u}, \b h \in \mathrm{P}_{\v}I_{\v} \}.
	\end{equation*}
\end{lemma}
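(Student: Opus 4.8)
The plan is to unfold the statement down to the level of the projected index blocks $\mathrm{P}_{\u}I_{\u}$ and then prove two inclusions. First I would combine the definition \eqref{eq:pre:di} of the difference set with the representation $I(U)=\bigcup_{\u\in U}\mathrm{P}_{\u}I_{\u}$ from \eqref{set:IUfull} to obtain
\begin{align*}
\D(I(U))&=\bigl\{\k-\b h\colon \k,\b h\in I(U)\bigr\}\\
&=\bigcup_{\u_1\in U}\bigcup_{\u_2\in U}\bigl\{\k-\b h\colon \k\in\mathrm{P}_{\u_1}I_{\u_1},\ \b h\in\mathrm{P}_{\u_2}I_{\u_2}\bigr\}.
\end{align*}
This reduces everything to comparing the double union over all pairs $(\u_1,\u_2)\in U\times U$ with the union over the nested pairs $\v\subset\u$ appearing in the claimed formula.

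The inclusion ``$\supseteq$'' is the easy direction and uses that $U$ is a subset of ANOVA terms: whenever $\u\in U$ and $\v\subset\u$, the inclusion condition \eqref{eq:trunc_inclusion} forces $\v\in U$, hence $\mathrm{P}_{\u}I_{\u}\subset I(U)$ and $\mathrm{P}_{\v}I_{\v}\subset I(U)$, so every difference $\k-\b h$ with $\k\in\mathrm{P}_{\u}I_{\u}$ and $\b h\in\mathrm{P}_{\v}I_{\v}$ lies in $\D(I(U))$; taking the union over all such pairs gives the inclusion.

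For ``$\subseteq$'' I would process each block indexed by $(\u_1,\u_2)\in U\times U$ in turn. If $\u_2\subset\u_1$, the block already has the prescribed shape with $\u=\u_1$, $\v=\u_2$. If $\u_1\subsetneq\u_2$, I would invoke the symmetry of the $I_{\u}$: then $\mathrm{P}_{\u_1}I_{\u_1}$ and $\mathrm{P}_{\u_2}I_{\u_2}$ are invariant under $\k\mapsto-\k$, so $\{\k-\b h\colon \k\in\mathrm{P}_{\u_1}I_{\u_1},\ \b h\in\mathrm{P}_{\u_2}I_{\u_2}\}=\{\b h-\k\colon \k\in\mathrm{P}_{\u_1}I_{\u_1},\ \b h\in\mathrm{P}_{\u_2}I_{\u_2}\}$, which is exactly the nested block for $(\u_2,\u_1)$.

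The step I expect to be the main obstacle is the case of \emph{incomparable} $\u_1$ and $\u_2$. There one must show that an arbitrary cross-difference $\k-\b h$ with $\k\in\mathrm{P}_{\u_1}I_{\u_1}$ and $\b h\in\mathrm{P}_{\u_2}I_{\u_2}$ is already produced by some nested pair. The natural idea is to read off $\b w=\supp(\k-\b h)\subset\u_1\cup\u_2$, locate a member of $U$ lying above $\b w$, and relocate the difference onto that index pair, again using symmetry to place the smaller set in the second slot. Making this reindexing simultaneously land inside $U$ and reproduce exactly the prescribed projected sets $\mathrm{P}_{\u}I_{\u}$ and $\mathrm{P}_{\v}I_{\v}$ is the delicate point, and it is precisely here that the two hypotheses — symmetry of the frequency sets $I_{\u}$ and downward closedness of $U$ — must be exploited in tandem.
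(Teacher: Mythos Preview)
Your treatment of the inclusion ``$\supseteq$'' and of the comparable cases in ``$\subseteq$'' matches the paper's proof, only spelled out in more detail: the paper records that downward closedness gives $\v\in U$ whenever $\v\subset\u\in U$, writes an arbitrary $\b\ell\in\D(I(U))$ as $\k-\b h$ with $\k\in\mathrm{P}_{\u}I_{\u}$, $\b h\in\mathrm{P}_{\v}I_{\v}$ for the unique $\u,\v\in U$ determined by the disjoint decomposition of $I(U)$, and then simply says ``taking the symmetry of the index sets $I_{\u}$ into account, we have proven the statement.''

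You are right to flag the incomparable case as the real obstacle, and your caution is well founded: the reindexing you sketch cannot be carried out in general, because the asserted equality is actually false. Take $d=2$, $U=\{\emptyset,\{1\},\{2\}\}$, $I_\emptyset=\{0\}$, and $I_{\{1\}}=I_{\{2\}}=\{-1,1\}$. Then $(1,0),(0,1)\in I(U)$, hence $(1,-1)\in\D(I(U))$. On the other hand, for every admissible pair $\v\subset\u\in U$ one has $\supp(\k-\b h)\subset\u$, so every element of the right-hand side is supported in a single $\u\in U$ and therefore lies on a coordinate axis; the vector $(1,-1)$ does not. The paper's proof does not address this case either --- invoking symmetry only swaps the roles of $\u$ and $\v$ and produces nothing when neither is contained in the other. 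So the gap you identified is genuine and is shared by the paper's own argument; the lemma as stated does not hold without an additional structural hypothesis on $U$.
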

\begin{proof}
	It is easy to see that $\bigcup_{\substack{\u \in U \\ \v \subset \u}} \{ \k-\b h \colon \k \in \mathrm{P}_{\u}I_{\u}, \b h \in \mathrm{P}_{\v}I_{\v} \} \subset \D(I(U))$ since $\mathrm{P}_{\u}I_{\u} \subset I(U)$ for every $\u\in U$ and $\v\in U$ for all $\v\subset\u\in U$ due to \eqref{eq:trunc_inclusion}. In order to show the other inclusion we take an element $\b\ell \in \D(I(U))$. By the uniqueness property of the ANOVA decomposition we know that there exists $\u,\v \in U$ such that $\b\ell = \k - \b h$ with $\b k \in \mathrm{P}_{\u}I_{\u}$ and $\b h \in \mathrm{P}_{\v}I_{\v}$. Taking the symmetry of the index sets $I_{\u}$ into account, we have proven the statement.
\end{proof}
The following lemma gives an estimate for the size of the difference set of index sets of type $I(U)$ if there exists an upper bound on the cardinality of the term dependent sets $I_{\u}$.
\begin{lemma}\label{lem:approx:diff}
	Let $U$ be a subset of ANOVA terms and $I_{\u} \subset (\Z\setminus\{0\})^{\au}, \u \in U,$ symmetric frequency sets. Then the cardinality of the difference set of $I(U)$ is bounded by \begin{equation}\label{eq:approx:diff_est}
		\abs{\D(I(U))} \leq \sum_{\u \in U} \sum_{\v\subset\u} \abs{I_{\u}} \abs{I_{\v}} \leq 2^{\max_{\u\in U} \au} \abs{U} \max_{\u \in U} \abs{I_{\u}}^2.
	\end{equation} 
\end{lemma}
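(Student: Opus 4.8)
The plan is to deduce the bound directly from the description of the difference set already established in \cref{lemma:est_di}. That lemma tells us
\begin{equation*}
\D(I(U)) = \bigcup_{\substack{\u \in U \\ \v \subset \u}} \left\{ \k-\h \colon \k \in \mathrm{P}_{\u}I_{\u}, \h \in \mathrm{P}_{\v}I_{\v} \right\},
\end{equation*}
so the first step is simply to pass from a union to a sum of cardinalities: $\abs{\D(I(U))} \le \sum_{\u\in U}\sum_{\v\subset\u} \abs{\{\k-\h\colon \k\in\mathrm{P}_{\u}I_{\u},\,\h\in\mathrm{P}_{\v}I_{\v}\}}$. Here we only need an upper estimate, so the fact that distinct pairs $(\u,\v)$ may produce overlapping difference sets — and that within one pair two different pairs $(\k,\h)$ may yield the same $\k-\h$ — costs us nothing.

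The second step is to bound each inner term. The difference set of two finite sets $A,B\subset\Z^d$ has at most $\abs{A}\,\abs{B}$ elements, and since the projection operator $\mathrm{P}_{\u}$ from \eqref{eq:pre:proj} is injective on index sets (it embeds $I_{\u}\subset(\Z\setminus\{0\})^{\au}$ into $\Z^d$ by padding with zeros, so $\bm k\mapsto\bm k_{\u}$ inverts it), we have $\abs{\mathrm{P}_{\u}I_{\u}} = \abs{I_{\u}}$ and likewise $\abs{\mathrm{P}_{\v}I_{\v}} = \abs{I_{\v}}$. Combining the two steps yields the first inequality $\abs{\D(I(U))} \le \sum_{\u\in U}\sum_{\v\subset\u}\abs{I_{\u}}\abs{I_{\v}}$.

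For the second inequality I would crudely bound every factor $\abs{I_{\u}},\abs{I_{\v}}$ by $\max_{\w\in U}\abs{I_{\w}}$, so that $\sum_{\u\in U}\sum_{\v\subset\u}\abs{I_{\u}}\abs{I_{\v}} \le \big(\max_{\w\in U}\abs{I_{\w}}\big)^2 \sum_{\u\in U}\sum_{\v\subset\u} 1 = \big(\max_{\w\in U}\abs{I_{\w}}\big)^2 \sum_{\u\in U} 2^{\au}$, using that $\u$ has exactly $2^{\au}$ subsets; then $\sum_{\u\in U}2^{\au} \le \abs{U}\,2^{\max_{\u\in U}\au}$ gives the claimed bound. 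There is no real obstacle here — the only point that deserves a word is the injectivity of the projection, which is what turns $\abs{\mathrm{P}_{\u}I_{\u}}$ back into $\abs{I_{\u}}$; everything else is monotonicity of cardinality under unions and the elementary count of subsets.
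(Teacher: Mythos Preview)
Your proof is correct and follows essentially the same route as the paper: invoke \cref{lemma:est_di}, pass from the union to a sum of cardinalities bounded by $\abs{I_{\u}}\abs{I_{\v}}$, and then replace each factor by the maximum and count subsets to obtain the second inequality. Your explicit remark on the injectivity of the projection $\mathrm{P}_{\u}$ is a nice clarification that the paper leaves implicit.
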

\begin{proof}
	We estimate the cardinality of the difference set by applying \Cref{lemma:est_di} $$ \abs{\D(I(U))} \leq \sum_{\u \in U} \sum_{\v\subset\u} \abs{I_{\u}} \abs{I_{\v}}.$$ Here, we do not have equality since the union in \Cref{lemma:est_di} is not necessarily disjoint. Applying the upper bound on the cardinality of the sets $I_{\u}$, we arrive at \begin{align*} \sum_{\u \in U} \sum_{\v\subset\u} \abs{I_{\u}} \abs{I_{\v}} &\leq \sum_{\u \in U} \sum_{\v\subset\u} \max_{\u \in U} \abs{I_{\u}}^2 \leq \max_{\u \in U} \abs{I_{\u}}^2 \, 2^{\max_{\u\in U} \au} \sum_{\u \in U} 1 \\ &\leq 2^{\max_{\u\in U} \au} \abs{U} \max_{\u \in U} \abs{I_{\u}}^2.\end{align*}
\end{proof}
\begin{remark}
    The cardinality of $U_{d_s}$ is bounded by $(\e\cdot d/d_s)^{d_s}$, see \cref{lem:polycard}. Therefore the estimate in \eqref{eq:approx:diff_est} becomes \begin{equation*}
        \abs{\D(I(U_{d_s}))} \leq \left(\frac{2\e\cdot d}{d_s}\right)^{d_s} \max_{\u \in U} \abs{I_{\u}}^2.
    \end{equation*}
\end{remark}

In the following, we consider special term-dependent frequency index sets of the structure \begin{equation}\label{eq:approx:freq_weight}
    I_{\u} \coloneqq \left\{ \b\ell \in (\Z\setminus\{0\})^{\au} \colon w(\k) \leq N_{\u} \text{ for } \k \in \Z^d \text{ with } \k_{\u} = \b\ell,\,\k_{\uc} = \b 0 \right\}
\end{equation} with a subset of coordinate indices $\emptyset \neq \u \subset \D$, a weight function $\fun{w}{\Z^d}{[1,\infty)}$ and cut-off $N_{\u} \in \N$. For a given subset of ANOVA terms $U \subset \mathcal{P}(\D)$ we estimate the cardinalities of both, $I(U)$ and the difference set $\D(I(U))$.

\begin{lemma}\label{lem:approx:IUcard}
	Let $U \subset \mathcal{P}(\D)$ be a subset of ANOVA terms, $I_\emptyset = \{\b 0\}$, and $I_{\u}$, $\emptyset \neq \u \in U$, finite frequency sets as in \eqref{eq:approx:freq_weight} for a weight function $\fun{w}{\Z^d}{[1,\infty)}$ and $N_{\u} \in \N$. Moreover, let $\fun{h_{\min}}{\N}{[1,\infty)}$ and $\fun{h_{\max}}{\N}{[1,\infty)}$ be functions such that \begin{equation*}
		c\,h_{\min}(N_{\u_{\min}}) \leq \min_{\u \in U\setminus\{\emptyset\}} \abs{I_{\u}} \text{ and } \max_{\u \in U} \abs{I_{\u}} \leq C\, h_{\max}(N_{\u_{\max}})
	\end{equation*} with $\u_{\min} = \argmin_{\u\in U\setminus\{\emptyset\}} \abs{I_{\u}}$, $\u_{\max} = \argmax_{\u\in U} \abs{I_{\u}}$, and $0 < c \leq C$. Then we have for the asymptotic behavior of the cardinality of $I(U)$ \begin{equation*}
		c\,h_{\min}(N_{\u_{\min}}) \leq \frac{\abs{I(U)}}{\abs{U}} \leq C\, h_{\max}(N_{\u_{\max}}).
	\end{equation*} The constants do not depend on the spatial dimension $d$.
\end{lemma}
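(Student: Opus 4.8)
The plan is to turn the union defining $I(U)$ into a disjoint one, count it exactly as $\sum_{\u\in U}\abs{I_{\u}}$, and then read off both inequalities by sandwiching the average $\abs{I(U)}/\abs{U}$ between $\min_{\u\in U}\abs{I_{\u}}$ and $\max_{\u\in U}\abs{I_{\u}}$. The one genuinely substantive point is the disjointness: since each $I_{\u}$ with $\u\neq\emptyset$ lies in $(\Z\setminus\{0\})^{\au}$ and $I_\emptyset=\{\b 0\}$, every $\k\in\mathrm{P}_{\u}I_{\u}$ satisfies $\k_{\uc}=\b 0$ with all entries of $\k_{\u}$ nonzero, i.e.\ $\mathrm{P}_{\u}I_{\u}\subset\Fud$; by \cref{lemma:anova:sets} the sets $\Fud$, $\u\subseteq\D$, are pairwise disjoint, hence so are the $\mathrm{P}_{\u}I_{\u}$, $\u\in U$.

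First I would note that the coordinate restriction $\k\mapsto\k_{\u}$ is a bijection from $\mathrm{P}_{\u}I_{\u}$ onto $I_{\u}$: it is injective because on $\mathrm{P}_{\u}I_{\u}$ the block $\k_{\uc}$ is forced to $\b 0$, and it is onto $I_{\u}$ by the definition \eqref{eq:pre:proj}; for $\u=\emptyset$ this just reads $\abs{\mathrm{P}_\emptyset I_\emptyset}=1=\abs{I_\emptyset}$. Together with the disjointness from the previous step this gives the exact identity $\abs{I(U)}=\sum_{\u\in U}\abs{I_{\u}}$, and everything after that is arithmetic.

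For the upper bound, $\abs{I(U)}=\sum_{\u\in U}\abs{I_{\u}}\le\abs{U}\max_{\u\in U}\abs{I_{\u}}\le\abs{U}\,C\,h_{\max}(N_{\u_{\max}})$, so dividing by $\abs{U}$ yields $\abs{I(U)}/\abs{U}\le C\,h_{\max}(N_{\u_{\max}})$. For the lower bound I would split off the empty set and bound the remaining $\abs{U}-1$ terms from below: $\abs{I(U)}\ge\sum_{\emptyset\neq\u\in U}\abs{I_{\u}}\ge(\abs{U}-1)\min_{\emptyset\neq\u\in U}\abs{I_{\u}}\ge(\abs{U}-1)\,c\,h_{\min}(N_{\u_{\min}})$, hence $\abs{I(U)}/\abs{U}\ge\tfrac{\abs{U}-1}{\abs{U}}\,c\,h_{\min}(N_{\u_{\min}})$, the single unit coming from $f_\emptyset$ being absorbed into the asymptotic statement. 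Since $c,C,h_{\min},h_{\max}$ are taken from the hypotheses and never involve $d$, neither do the resulting constants. The hard part, such as it is, will be nothing more than this careful handling of the $\u=\emptyset$ term and making the ``asymptotic'' reading of $\abs{I(U)}/\abs{U}$ precise; the structural input — disjointness of the $\mathrm{P}_{\u}I_{\u}$ — is handed to us directly by \cref{lemma:anova:sets}.
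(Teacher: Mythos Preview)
Your argument is correct and follows exactly the paper's approach: use disjointness of the $\mathrm{P}_{\u}I_{\u}$ to get $\abs{I(U)}=\sum_{\u\in U}\abs{I_{\u}}$, then bound the average by $\max$ and $\min$. You are in fact more careful than the paper, which dispatches the lower bound with ``similar arguments'' without isolating the $\u=\emptyset$ term; your observation that this produces the harmless factor $(\abs{U}-1)/\abs{U}$, absorbed by the ``asymptotic'' phrasing in the statement, is exactly the right resolution.
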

\begin{proof}
	Since the projected sets $\mathrm{P}_{\u} I_{\u}$, $\u\in U$, are disjoint, we have \begin{equation*}
		\abs{I(U)} = \sum_{\u\in U} \abs{I_{\u}}.
	\end{equation*} In order to show the upper bound, we estimate the cardinality of each index set by $h_{\max}$
	\begin{equation*}
		\sum_{\u\in U} \abs{I_{\u}} \leq \sum_{\u\in U} C\, h_{\max}(N_{\u_{\max}}) \leq C\, h_{\max}(N_{\u_{\max}}) \sum_{\u\in U} 1 = \abs{U}\, C\, h_{\max}(N_{\u_{\max}}).
	\end{equation*} The lower bound follows with similar arguments.
\end{proof}

\begin{corollary}\label{cor:approx:diff_card}
    Let $U \subset \mathcal{P}(\D)$ be a subset of ANOVA terms, $I_\emptyset = \{\b 0\}$, and $I_{\u}$, $\emptyset \neq \u \in U$, finite symmetric frequency sets as in \eqref{eq:approx:freq_weight} for a weight function $\fun{w}{\Z^d}{[1,\infty)}$ and $N_{\u} \in \N$. Moreover, let $h_{\max}$ be a function as in \Cref{lem:approx:IUcard}. Then
    \begin{equation*}
        \abs{\D(I(U))} \leq C^2 \, 2^{\max_{\u\in U} \au} \abs{U} h_{\max}^2(N_{\u_{\max}}).
    \end{equation*}
\end{corollary}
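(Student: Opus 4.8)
The plan is to obtain the bound by chaining the two cardinality estimates already proved. First I would apply \cref{lem:approx:diff} to the subset of ANOVA terms $U$ with the term-dependent sets $I_{\u}$. Its hypotheses are met: by assumption the $I_{\u}$, $\emptyset\neq\u\in U$, are finite symmetric subsets of $(\Z\setminus\{0\})^{\au}$ of the form \eqref{eq:approx:freq_weight}, and $I_\emptyset=\{\b 0\}$ is the degenerate set in $(\Z\setminus\{0\})^{0}=\{0\}$, which contributes only a single element and therefore does not affect any of the maxima over $\u\in U$. \cref{lem:approx:diff} then yields
\[
	\abs{\D(I(U))} \leq 2^{\max_{\u\in U}\au}\,\abs{U}\,\max_{\u\in U}\abs{I_{\u}}^2.
\]

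Next I would bound the remaining maximum using \cref{lem:approx:IUcard}: the function $h_{\max}$ from its hypotheses satisfies $\max_{\u\in U}\abs{I_{\u}} \leq C\,h_{\max}(N_{\u_{\max}})$ with $0<c\leq C$. Squaring this and substituting into the previous display gives
\[
	\abs{\D(I(U))} \leq 2^{\max_{\u\in U}\au}\,\abs{U}\,C^2\,h_{\max}^2(N_{\u_{\max}}),
\]
which after rearranging the factors is exactly the claimed estimate.

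I do not expect any genuine obstacle here — the statement is a direct corollary obtained by concatenating \cref{lem:approx:diff} and \cref{lem:approx:IUcard}. The only point that merits a brief check is that the hypotheses of \cref{lem:approx:diff} transfer to the present setting: that the sets \eqref{eq:approx:freq_weight} are symmetric (which holds whenever the weight function $w$ is even, as is the case for the weights \eqref{eq:weights}) and that adjoining the degenerate set $I_\emptyset=\{\b 0\}$ leaves the maxima over $\u\in U$ unchanged. Since neither point influences the final bound, the result follows immediately.
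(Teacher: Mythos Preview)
Your proposal is correct and follows exactly the same approach as the paper, which states that the corollary is a direct consequence of \cref{lem:approx:diff} and \cref{lem:approx:IUcard}. Your additional remarks on the symmetry hypothesis and the degenerate set $I_\emptyset$ are fine sanity checks but are not needed for the argument, since the corollary already assumes the $I_{\u}$ are symmetric.
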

\begin{proof}
    The corollary is a direct consequence of \cref{lem:approx:diff} and \cref{lem:approx:IUcard}.
\end{proof}

We may apply \cite[Algorithm 8.17]{PlPoStTa18} to construct the reconstructing rank-1 lattice $\Lambda(\b z, M, I(U))$ via a component-by-component approach. 
Choosing the set $X = \Lambda(\b z, M, I(U))$ as sampling nodes yields a Moore-Penrose inverse of type \eqref{eq:approx:mpinv} and we are able to compute the solution to \eqref{eq:approx:gen_ls} by multiplying with the adjoint Fourier matrix. This computation can be done efficiently using a lattice fast Fourier transform or LFFT, see \cite[Section 8.2.2]{PlPoStTa18}.

\subsubsection{Scattered data scenario}\label{sec:approx:scattered}

In this section, we consider the scenario of scattered data approximation, i.e., we have a fixed set of nodes $X \subset \T^d$. Here, we aim to solve the least-squares problem \eqref{eq:approx:gen_ls} with the iterative LSQR method \cite{PaSa82}. Specifically, we are interested in the matrix-free variant, i.e., we do not have to construct the system matrix $\F_{I(U)} \in \C^{\abs{X},\abs{I(U)}}$ explicitly. The curse of dimensionality would quickly lead to the size of the matrix becoming intractable. The matrix-free variant requires two algorithms, one which takes a vector $\b a \in \C^{\abs{I(U)}}$ as an input and returns the result of the matrix-vector multiplication $\F_{I(U)} \b a$ and one that takes $\hat{\b a} \in \C^{\abs{X}}$ as an input and returns the result of $\F^{\ast}_{I(U)} \hat{\b a}$. If we take \cref{rem:approx:struct} into account, it is only necessary to provide algorithms for fast multiplication with Fourier matrices $\F_{I_{\u}} \in \C^{\abs{X},\abs{I_{\u}}}$, $\u \in U$.

The existence of such algorithms depends on the choice of the specific index sets $I_{\u}$. For full grids, i.e., frequency sets of the type $$ I_{\u} = G_{N}^{\u} = \left\{ \k\in\Z^{\au}: -\frac{N_{\u}}{2} \leq k_i \leq \frac{N_{\u}}{2} - 1, i = 1,2,\dots,\au \right\}, N_{\u} \in 2\N, $$ the non-equispaced fast Fourier transform (NFFT) was introduced in \cite{KeKuPo09}. Moreover, for hyperbolic cross index sets of the form \begin{equation*}
    I_{\u} = H_n^{\au} = \bigcup_{\substack{\b j \in \N_0^{\au} \\ \norm{\b j}{1} = n}} \hat{G}_{\b j}
\end{equation*} with $\hat{G}_{\b{n}} = \times_{s=1}^{\au} \hat{G}_{n_s}$ and $\hat{G}_{n_s} = (-2^{n_s-1}, 2^{n_s-1}]^{\au} \cap \Z$, we have the non-equispaced hyperbolic cross fast Fourier transform (NHCFFT), cf.~\cite{DoKuPo09}.

\subsection{Approximation with active set}\label{sec:approx:final}

Now that we have obtained the active set $U_{X,\y}^{(\b\eps)}$ from \eqref{eq:approx:active_set}, we aim to construct an approximation using only these ANOVA terms. The global sensitivity indices $\gsi{\u}{S_{I(U_{d_s})}^X f }$ calculated from the approximation $S_{I(U_{d_s})}^X f $ in \eqref{eq:approx:initial} provide us with a basis to choose term-dependent frequency index sets $I_{\u} \subset (\Z\setminus\{0\})^{\au}$, $\emptyset \neq \u \in U_{X,\y}^{(\b\eps)}$. A higher sensitivity index suggests that the term is more important to the function and therefore a larger corresponding index set could be advisable. 

We project the index sets as before to obtain $I(U_{X,\y}^{(\b\eps)})$, see \eqref{set:IUfull}. Note that in general and depending on the threshold $\b\epsilon$, we have reduced the number of frequencies significantly. This is a sensible measure to reduce the effects of overfitting. Now, we approximate $f$ by the Fourier partial sum \begin{equation*}
    f(\x) \approx \mathrm{T}_{U_{X,\y}^{(\b\eps)}} f (\x) \approx S_{I(U_{X,\y}^{(\b\eps)})} f (\x) = \Fser{\k}{I(U_{X,\y}^{(\b\eps)})}{\fc{\k}{f}}{\x}.
\end{equation*} The Fourier coefficients $\fc{\k}{f}$ are again unknown and we determine them by least-squares approximation from $X$ and $\y$. The unique solution is given by \begin{equation}\label{eq:approx:last}
    \bfh_{\text{sol}} = \argmin_{\bfh \in \C^{\abs{I(U_{X,\y}^{(\b\eps)})}}} \norm{\y - \F_{I(U_{X,\y}^{(\b\eps)})} \bfh}{2}^2
\end{equation} if the Fourier matrix $\F_{I(U_{X,\y}^{(\b\eps)})} = \left( \e^{2\pi\i\k\cdot\x} \right)_{\x\in X,\k\in I(U_{X,\y}^{(\b\eps)})}$ has full rank. Details on how to solve this system for scattered data and black-box approximation can be found in \cref{sec:approx:ls}. We use the elements of the solutions vector $\bfh_{\text{sol}} = (\hat{f}_{\k})_{\k \in I(U_{X,\y}^{(\b\eps)})}$ to form the approximate Fourier partial sum and our solution \begin{equation*}
    f(\x) \approx S_{I(U_{X,\y}^{(\b\eps)})} f (\x) \approx S_{I(U_{X,\y}^{(\b\eps)})}^X f (\x) = \Fser{\k}{I(U_{X,\y}^{(\b\eps)})}{\hat{f}_{\k}}{\x}.
\end{equation*} The following algorithm summarizes the proposed method.

\begin{algorithm}[ht]
	\vspace{2mm}
	\begin{tabular}{ l l l }
		\textbf{Input:} & $X \subset \T^d$ & finite node set \\
		& $\b y = (f(\x))_{\x\in X}$ & function values \\
		& $d_s \in \D$ & superposition threshold \\
	\end{tabular}
	\begin{algorithmic}[1]
	    \STATE{Choose finite order-dependent search sets $I_1 \subset \Z\setminus\{0\}, \dots, I_{d_s} \subset (\Z\setminus\{0\})^{d_s}$.}
	    \STATE{Compute solution of least-squares problem \eqref{eq:approx:ls_ds}.}
	    \STATE{$\bfh_{\text{sol}} = (\hat{f}_{\k})_{\k \in I(U_{d_s})} \leftarrow \argmin_{\bfh \in \C^{\abs{I(U_{d_s})}}} \norm{\y - \F_{I(U_{d_s})} \bfh}{2}^2$}
	    \STATE{Compute global sensitivity indices for approximation $S_{I(U_{d_s})}^{X} f$ using \eqref{eq:anova:gsi}.}
	    \STATE{$\varrho(\u,S_{I(U_{d_s})}^{X} f) \leftarrow \frac{\norm{(S_{I(U_{d_s})}^{X} f)_{\u}}{\Lt(\T^{d})}^2}{\norm{S_{I(U_{d_s})}^{X} f}{\Lt(\T^{d})}^2 - \abs{\fc{0}{S_{I(U_{d_s})}^{X} f}}^2}$, $\u \in U_{d_s}$}
        \STATE{Choose threshold vector $\b\eps \in [0,1]^{d_s}$ and build active set.}
        \STATE{$U_{X,\b y}^{(\b\varepsilon)} \leftarrow \left\{ \v\subset\D \colon \exists\u\in U_{d_s}\colon\v \subset \u \quad\text{and}\quad \gsi{\u}{S_{I(U_{d_s})}^X f} > \varepsilon_{\au} \right\}$}
        \STATE{Use information from global sensitivity indices to choose finite index sets $I_{\u} \subset (\Z\setminus\{0\})^{\au}$ per ANOVA term in $U_{X,\b y}^{(\varepsilon)}$.}
        \STATE{Compute solution of least-squares problem \eqref{eq:approx:last}.}
        \STATE{$\bfh_{\text{sol}} = (\hat{f}_{\k})_{\k \in I(U_{X,\b y}^{(\b\varepsilon)})} \leftarrow \argmin_{\bfh \in \C^{\abs{I(U_{X,\y}^{(\b\eps)})}}} \norm{\y - \F_{I(U_{X,\y}^{(\b\eps)})} \bfh}{2}^2$}
	\end{algorithmic}
	\begin{tabular}{ l l l }
		\textbf{Output:} & $\hat{f}_{\k} \in \C, \k \in I(U_{X,\b y}^{(\b\varepsilon)})$ & approximations to Fourier \\
		 & & coefficients $\fc{\k}{f}$ \\
		& $\varrho(\u,S_{I(U_{d_s})}^{X} f) \in [0,1], \u \in U_{d_s}$ & global sensitivity indices of $S_{I(U_{d_s})}^{X} f$ \\
		& & or \textit{importance ranking} on the terms
	\end{tabular}
	\caption{ANOVA Approximation Method}
	\label{alg}
\end{algorithm}

\section{Error analysis}\label{sec:anova:error}

The error of our approximation method measured in the norm of some space $H\subset\L_2(\T^d)$ can be decomposed into multiple components by the triangle inequality \begin{equation*}
    \norm{f - S_{I(U)}^X f}{H} \leq \underbrace{\norm{f - \mathrm{T}_{U} f}{H}}_{\text{ANOVA truncation error}}+ \underbrace{\norm{\mathrm{T}_{U} f - S_{I(U)}^X f}{H}}_{\text{approximation error}}
\end{equation*} for an active set of ANOVA terms $U \subset U_{d_s}$ with superposition threshold $d_s \in \D$. We distinguish between the ANOVA truncation error and the approximation error. Here, the analysis of the ANOVA truncation error is independent of the concrete approximation problem \eqref{eq:approx:last} and the scenario (scattered data or black-box).

\subsection{ANOVA truncation error}

The ANOVA truncation error is related to the truncation of the ANOVA decomposition to the set $U_{d_s}$ with superposition threshold $d_s \in \D$ and the active set $U \subset U_{d_s}$. We can separate the ANOVA truncation error as follows \begin{equation}\label{eq:anv:trunc:err}
    \norm{f - \mathrm{T}_{U} f}{H} \leq \underbrace{\norm{f - \mathrm{T}_{d_s} f}{H}}_{\text{truncation by }d_s} + \underbrace{\norm{\mathrm{T}_{d_s} f - \mathrm{T}_{U} f}{H}}_{\text{active set truncation}}.
\end{equation} Here, we bring $\mathrm{T}_{d_s}$ in with the aim to relate the error to our function class of low-order interactions, see \eqref{eq:trunc_err}. To control the second term, we require assumptions on the sensitivity indices of the ANOVA terms in $U_{d_s}\setminus U$. Since the error is only related to the structure of the function it can be considered independently of any specific approximation scenario like black-box or scattered data approximation. We show bounds for this error in the case that $f$ is an element of a Sobolev type space $\st(\T^d)$ or a Wiener algebra $\aw(\T^d)$ and $H$ is $\Lt(\T^d)$ or $\mathrm{L}_\infty(\T^d)$.

\begin{theorem}\label{thm:av_trunc_error:1}
    Let $f \in \st(\T^d)$ with a weight function $\fun{w}{\Z^d}{[1,\infty)}$ and superposition dimension $\dsuper$, see \eqref{eq:anova:superposition_specific}, for a $\delta \in (0,1)$. If there exists a subset of ANOVA terms $U \subset U_{\dsuper}$ such that $$\gsi{\u}{f} = \frac{\va{f_{\u}}}{\va{f}} < \varepsilon, \,\varepsilon > 0,$$ for every $\u \in U_{\dsuper} \setminus U$ then \begin{equation*}
        \frac{\norm{f - \mathrm{T}_{U} f}{\Lt(\T^d)}}{\norm{f}{\st(\T^d)}} \leq \sqrt{1-\delta} + \sqrt{\abs{U_{\dsuper} \setminus U} \varepsilon}.
    \end{equation*}
\end{theorem}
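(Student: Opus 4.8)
The plan is to use the splitting \eqref{eq:anv:trunc:err} with $H = \Lt(\T^d)$ and $d_s = \dsuper$, so that
\begin{equation*}
    \norm{f - \mathrm{T}_{U} f}{\Lt(\T^d)} \leq \norm{f - \mathrm{T}_{\dsuper} f}{\Lt(\T^d)} + \norm{\mathrm{T}_{\dsuper} f - \mathrm{T}_{U} f}{\Lt(\T^d)},
\end{equation*}
and then bound the two pieces separately, finally dividing by $\norm{f}{\st(\T^d)}$. For the first term I would invoke the definition of the modified superposition dimension \eqref{eq:anova:superposition_specific}: since $\dsuper$ is chosen so that $\sup_{\norm{f}{H} \leq 1} \sum_{\au > \dsuper} \norm{f_{\u}}{\L_2(\T^d)}^2 \leq 1 - \delta$, and using $\norm{f - \mathrm{T}_{\dsuper} f}{\Lt(\T^d)}^2 = \sum_{\au > \dsuper} \norm{f_{\u}}{\Lt(\T^d)}^2$ (which holds by \Cref{cor:ortho}, exactly as recorded after \eqref{eq:specificds}), scaling by $\norm{f}{\st(\T^d)}$ gives $\norm{f - \mathrm{T}_{\dsuper} f}{\Lt(\T^d)} \leq \sqrt{1-\delta}\,\norm{f}{\st(\T^d)}$.

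For the second term I would write $\mathrm{T}_{\dsuper} f - \mathrm{T}_{U} f = \sum_{\u \in U_{\dsuper} \setminus U} f_{\u}$, which again by orthogonality (\Cref{cor:ortho}) satisfies $\norm{\mathrm{T}_{\dsuper} f - \mathrm{T}_{U} f}{\Lt(\T^d)}^2 = \sum_{\u \in U_{\dsuper} \setminus U} \norm{f_{\u}}{\Lt(\T^d)}^2 = \sum_{\u \in U_{\dsuper} \setminus U} \va{f_{\u}}$, using $\va{f_{\u}} = \norm{f_{\u}}{\Lt(\T^{\au})}^2$ for $\emptyset \neq \u$. Each summand is $\va{f_{\u}} = \gsi{\u}{f}\,\va{f} < \varepsilon\,\va{f} \leq \varepsilon\,\norm{f}{\Lt(\T^d)}^2 \leq \varepsilon\,\norm{f}{\st(\T^d)}^2$ (the last step since $w \geq 1$). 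Summing over the at most $\abs{U_{\dsuper} \setminus U}$ terms gives $\norm{\mathrm{T}_{\dsuper} f - \mathrm{T}_{U} f}{\Lt(\T^d)}^2 \leq \abs{U_{\dsuper} \setminus U}\,\varepsilon\,\norm{f}{\st(\T^d)}^2$, hence $\norm{\mathrm{T}_{\dsuper} f - \mathrm{T}_{U} f}{\Lt(\T^d)} \leq \sqrt{\abs{U_{\dsuper} \setminus U}\,\varepsilon}\,\norm{f}{\st(\T^d)}$. Adding the two bounds and dividing by $\norm{f}{\st(\T^d)}$ yields the claim.

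There is no serious obstacle; the only points requiring a little care are (i) confirming that the hypothesis $U \subset U_{\dsuper}$ together with the inclusion property \eqref{eq:trunc_inclusion} makes $U_{\dsuper} \setminus U$ the precise index set of "missing" ANOVA terms in $\mathrm{T}_{\dsuper} f - \mathrm{T}_{U} f$, and (ii) making sure the chain $\va{f} \leq \norm{f}{\Lt(\T^d)}^2 \leq \norm{f}{\st(\T^d)}^2$ is justified — the first inequality from the definition $\va{f} = \norm{f}{\Lt(\T^d)}^2 - \abs{\fc{\b 0}{f}}^2$, the second from $w(\k) \geq 1$ for all $\k$. Everything else is the triangle inequality plus the Pythagoras identity from \Cref{cor:ortho}, so the proof is short.
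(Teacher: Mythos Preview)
Your proposal is correct and follows essentially the same approach as the paper: split via \eqref{eq:anv:trunc:err}, bound the first term using the definition of $\dsuper$, and bound the second term by orthogonality together with $\va{f_{\u}} < \varepsilon\,\va{f} \leq \varepsilon\,\norm{f}{\st(\T^d)}^2$. The paper's proof is slightly terser (it writes the second term via Parseval as $\sum_{\u \in U_{\dsuper}\setminus U}\sum_{\k\in\Fud}\abs{\fc{\k}{f}}^2$ rather than invoking \Cref{cor:ortho} directly), but this is the same computation.
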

\begin{proof}
    The ANOVA truncation error can be separated as in \eqref{eq:anv:trunc:err}. We prove an upper bound for the active set truncation. With Parseval's equality and the assumption on the global sensitivity indices, we estimate \begin{equation}\label{eq:proof:est}
        \norm{\mathrm{T}_{\dsuper} f - \mathrm{T}_{U} f}{\Lt(\T^d)}^2 = \sum_{\u \in U_{\dsuper} \setminus U} \sum_{\k \in \Fud} \abs{\fc{\k}{f}}^2 \leq \va{f} \abs{U_{\dsuper} \setminus U} \varepsilon.
    \end{equation}
    Clearly, we have $\va{f} \leq \norm{f}{\Lt(\T^d)}^2 \leq \norm{f}{\st(\T^d)}^2$.
\end{proof}

\begin{theorem}\label{thm:av_trunc_error:2}
    Let $f \in \aw(\T^d)$ with a weight function $\fun{w}{\Z^d}{[1,\infty)}$. If there exsists a subset of ANOVA terms $U \subset U_{d_s}$, $d_s \in \D$, such that \begin{equation}\label{eq:linf:preq}
    	\frac{\sum_{\k\in\Fud} \abs{\fc{\k}{f}}}{\sum_{\k\in\Z^d} \abs{\fc{\k}{f}}}< \varepsilon_1,\,\varepsilon_1 >0 
    \end{equation} for every $\u \in U_{d_s} \setminus U$ and we have $$ \frac{\norm{f-\mathrm{T}_{{d_s}} f}{\mathrm{L}_\infty(\T^d)}}{\norm{f}{\aw(\T^d)}} < \varepsilon_2, \,\varepsilon_2 > 0,   $$ then \begin{equation*}
        \frac{\norm{f - \mathrm{T}_{U} f}{\mathrm{L}_\infty(\T^d)}}{\norm{f}{\aw(\T^d)}} \leq \varepsilon_2 + \sqrt{\abs{U_{d_s} \setminus U} \varepsilon_1}.
    \end{equation*}
\end{theorem}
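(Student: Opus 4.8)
The plan is to reuse the two-term splitting of the ANOVA truncation error from \eqref{eq:anv:trunc:err}, namely $\norm{f - \mathrm{T}_{U} f}{\mathrm{L}_\infty(\T^d)} \leq \norm{f - \mathrm{T}_{d_s} f}{\mathrm{L}_\infty(\T^d)} + \norm{\mathrm{T}_{d_s} f - \mathrm{T}_{U} f}{\mathrm{L}_\infty(\T^d)}$, exactly as in the proof of \cref{thm:av_trunc_error:1}, and to bound the two pieces separately. The first piece is immediate: by hypothesis it is $< \varepsilon_2 \norm{f}{\aw(\T^d)}$. All the work is in the active-set truncation term.

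Since $U \subset U_{d_s}$ and both are subsets of ANOVA terms, we have $\mathrm{T}_{d_s} f - \mathrm{T}_{U} f = \sum_{\u \in U_{d_s}\setminus U} f_{\u}$. Using \cref{lemma:anova:sets} to realize this as a single Fourier series over the disjoint union $\bigcup_{\u\in U_{d_s}\setminus U}\Fud$, estimating the $\mathrm{L}_\infty$-norm by the sum of the moduli of the Fourier coefficients (as in the proof of \cref{theorem:anova:inf_error_1}), and invoking \cref{lemma:anova:terms} to identify $\fc{\k}{f_{\u}}=\fc{\k}{f}$ for $\k\in\Fud$, I obtain
\[
\norm{\mathrm{T}_{d_s} f - \mathrm{T}_{U} f}{\mathrm{L}_\infty(\T^d)} \leq \sum_{\u\in U_{d_s}\setminus U}\ \sum_{\k\in\Fud}\abs{\fc{\k}{f}}.
\]

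Now I bound this double sum in two ways and keep the smaller. Termwise, assumption \eqref{eq:linf:preq} together with $w\geq 1$ gives each inner sum $< \varepsilon_1 \sum_{\k\in\Z^d}\abs{\fc{\k}{f}} \leq \varepsilon_1\norm{f}{\aw(\T^d)}$, so the double sum is $\leq \abs{U_{d_s}\setminus U}\,\varepsilon_1\,\norm{f}{\aw(\T^d)}$. Globally, enlarging the outer sum to all $\u\subseteq\D$ and using the disjoint decomposition of $\Z^d$ from \cref{lemma:anova:sets} collapses it to $\sum_{\k\in\Z^d}\abs{\fc{\k}{f}} \leq \norm{f}{\aw(\T^d)}$. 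Hence $\norm{\mathrm{T}_{d_s} f - \mathrm{T}_{U} f}{\mathrm{L}_\infty(\T^d)} \leq \min\bigl(\abs{U_{d_s}\setminus U}\varepsilon_1,\,1\bigr)\norm{f}{\aw(\T^d)}$, and since $\min(c,1)\leq\sqrt c$ for every $c\geq 0$ this is $\leq \sqrt{\abs{U_{d_s}\setminus U}\varepsilon_1}\,\norm{f}{\aw(\T^d)}$. Adding the two bounds and dividing by $\norm{f}{\aw(\T^d)}$ yields the claim.

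The only non-routine point — the step I expect to be the main obstacle — is that the termwise estimate by itself produces the linear factor $\abs{U_{d_s}\setminus U}\varepsilon_1$, which is what one writes naively; to reach the square-root form stated in the theorem (mirroring \cref{thm:av_trunc_error:1}) one must additionally exploit the crude global bound $\sum_{\k\in\Z^d}\abs{\fc{\k}{f}}\leq\norm{f}{\aw(\T^d)}$ and then combine the two via $\min(c,1)\leq\sqrt c$. Everything else is the same Fourier-coefficient bookkeeping already used for \cref{theorem:anova:inf_error_1}.
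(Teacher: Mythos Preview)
Your argument is correct and follows the paper's approach: the same triangle-inequality splitting \eqref{eq:anv:trunc:err}, the same estimate of the $\mathrm{L}_\infty$-norm by the sum of absolute Fourier coefficients, and the same termwise application of \eqref{eq:linf:preq} to reach $\norm{\mathrm{T}_{d_s} f - \mathrm{T}_{U} f}{\mathrm{L}_\infty(\T^d)} \leq \abs{U_{d_s}\setminus U}\,\varepsilon_1\,\norm{f}{\aw(\T^d)}$.

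Where you differ is that you actually justify the passage from the linear factor $\abs{U_{d_s}\setminus U}\,\varepsilon_1$ to the square root in the stated bound, via the global estimate $\sum_{\k}\abs{\fc{\k}{f}}\leq\norm{f}{\aw(\T^d)}$ combined with $\min(c,1)\leq\sqrt{c}$. The paper's proof stops at the linear bound and simply asserts that it ``leads to the desired estimate'', so your extra step in fact patches a gap in the paper's presentation rather than diverging from it. (The square root in the statement appears to be inherited from the $\Lt$-analogue \cref{thm:av_trunc_error:1}, where it arises naturally from Parseval; here it does not, and your $\min$-argument is one clean way to recover it.)
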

\begin{proof}
    We split the ANOVA truncation error as in \eqref{eq:anv:trunc:err} and prove an upper bound for the second part. To this end, we estimate the $\mathrm{L}_\infty$ norm of $f$ by the absolute values of its Fourier coefficients and apply \eqref{eq:linf:preq} to obtain
    \begin{equation*}
        \norm{\mathrm{T}_{d_s} f - \mathrm{T}_{U} f}{\mathrm{L}_\infty(\T^d)} \leq \sum_{\u \in U_{d_s} \setminus U} \sum_{\k \in \Fud} \abs{\fc{\k}{f}} \leq \abs{U_{d_s} \setminus U} \varepsilon_1 \sum_{\k\in\Z^d} \abs{\fc{\k}{f}}.
    \end{equation*}
    Naturally, it holds that $\sum_{\k\in\Z^d} \abs{\fc{\k}{f}}\leq\norm{f}{\aw(\T^d)}$ which leads to the desired estimate.
\end{proof}
Note that in order to prove a bound for the error in $\mathrm{L}_\infty$, we formulated a condition on an $\ell_1$ equivalent of the global sensitivity indices $\gsi{\u}{f}$ in accordance with the Wiener algebra norm.

\subsection{Approximation error}\label{sec:error:approxerror}

In this section, we focus on the approximation error which we separate into two parts as well \begin{equation}\label{eq:approx_error}
    \norm{\mathrm{T}_{U} f - S_{I(U)}^X f}{H} \leq \underbrace{\norm{\mathrm{T}_{U} f - S_{I(U)} f}{H}}_{\text{truncation error}} + \underbrace{\norm{S_{I(U)} f - S_{I(U)}^X f}{H}}_{\text{aliasing error}}
\end{equation} with $H \in \{\Lt(\T^d), \L_\infty(\T^d)\}$, a subset of ANOVA terms $U \subset \mathcal{P}(\D)$, and a finite frequency index set $I(U) \subset \Z^d$ of structure \eqref{set:IUfull} with sets $I_{\u}$ as in \eqref{eq:approx:freq_weight}. The truncation error remains independent of the approximation scenario and can be estimated by the norms in $\aw$ and $\st$. \begin{lemma}\label{lem:error:6}
     Let $f \in \st(\T^d)$, $\fun{w}{\Z^d}{[1,\infty)}$ a weight function, and $I(U) \subset \Z^d$ a finite frequency index set of type \eqref{set:IUfull} with $U \subset \mathcal{P}(\D)$. Then the relative truncation error can be estimated as \begin{equation}\label{eq:error:trunc:l2hw}
        \frac{\norm{\mathrm{T}_{U} f - S_{I(U)} f}{\Lt(\T^d)}}{\norm{f}{\st(\T^d)}} \leq \frac{1}{\min_{\u\in U} N_{\u}}. 
    \end{equation}
    If in addition we have $\sum_{\k \in \Z^d} \frac{1}{w^2(\k)} < \infty$, we can estimate \begin{equation}\label{eq:error:trunc:lihw}
        \frac{\norm{\mathrm{T}_{U} f - S_{I(U)} f}{\L_\infty(\T^d)}}{\norm{f}{\st(\T^d)}} \leq \sqrt{\sum_{\u \in U} \sum_{\k \in \Fud\setminus \mathrm{P}_{\u}I_{\u}} \frac{1}{w^2(\k)}}. 
    \end{equation}
\end{lemma}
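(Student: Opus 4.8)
The plan is to pin down exactly the Fourier coefficients of $\mathrm{T}_{U} f - S_{I(U)} f$ and then argue exactly as in the proofs of \cref{theorem:anova:l2_error} and \cref{theorem:anova:inf_error_1}: Parseval's identity for \eqref{eq:error:trunc:l2hw} and the Cauchy--Schwarz inequality for \eqref{eq:error:trunc:lihw}. First I would note that by \cref{lemma:anova:trunc_coeff} the function $\mathrm{T}_{U}f$ has Fourier coefficient $\fc{\k}{f}$ at every $\k \in \bigcup_{\u\in U}\Fud$ and $0$ otherwise, while $S_{I(U)}f$ has coefficient $\fc{\k}{f}$ at every $\k \in I(U) = \bigcup_{\u\in U}\mathrm{P}_{\u}I_{\u}$ and $0$ otherwise. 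Since $\mathrm{P}_{\u}I_{\u} \subseteq \Fud$ for each $\u \in U$ (for $\u = \emptyset$ both sets equal $\{\b 0\}$), and the sets $\Fud$, $\u \subseteq \D$, are pairwise disjoint by \cref{lemma:anova:sets}, the coefficient of $\mathrm{T}_{U}f - S_{I(U)}f$ at $\k$ equals $\fc{\k}{f}$ precisely when $\k$ lies in the \emph{disjoint} union $\bigcup_{\u\in U}(\Fud \setminus \mathrm{P}_{\u}I_{\u})$, and vanishes otherwise.

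For \eqref{eq:error:trunc:l2hw} I would then apply Parseval's identity to get
\[
\norm{\mathrm{T}_{U} f - S_{I(U)} f}{\Lt(\T^d)}^2 = \sum_{\u\in U}\sum_{\k\in\Fud\setminus\mathrm{P}_{\u}I_{\u}}\abs{\fc{\k}{f}}^2 .
\]
By the definition of $I_{\u}$ in \eqref{eq:approx:freq_weight}, any $\k \in \Fud$ not belonging to $\mathrm{P}_{\u}I_{\u}$ satisfies $w(\k) > N_{\u} \geq \min_{\u\in U} N_{\u}$. Writing $\abs{\fc{\k}{f}}^2 = w^2(\k)^{-1}\,w^2(\k)\abs{\fc{\k}{f}}^2$, extracting the minimum, and extending the double sum to all of $\Z^d$ bounds the right-hand side by $(\min_{\u\in U} N_{\u})^{-2}\sum_{\k\in\Z^d} w^2(\k)\abs{\fc{\k}{f}}^2 = (\min_{\u\in U} N_{\u})^{-2}\norm{f}{\st(\T^d)}^2$, which is \eqref{eq:error:trunc:l2hw}.

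For \eqref{eq:error:trunc:lihw} I would instead estimate the $\L_\infty$ norm by the sum of the moduli of the Fourier coefficients over the same index set, write each term as $w(\k)^{-1}\cdot w(\k)\abs{\fc{\k}{f}}$, and apply the Cauchy--Schwarz inequality over $\bigcup_{\u\in U}(\Fud\setminus\mathrm{P}_{\u}I_{\u})$, giving
\[
\norm{\mathrm{T}_{U} f - S_{I(U)} f}{\L_\infty(\T^d)} \leq \sqrt{\sum_{\u\in U}\sum_{\k\in\Fud\setminus\mathrm{P}_{\u}I_{\u}}\frac{1}{w^2(\k)}}\;\sqrt{\sum_{\u\in U}\sum_{\k\in\Fud\setminus\mathrm{P}_{\u}I_{\u}}w^2(\k)\abs{\fc{\k}{f}}^2}.
\]
The second factor is at most $\norm{f}{\st(\T^d)}$, and the hypothesis $\sum_{\k\in\Z^d} w^{-2}(\k) < \infty$ makes the first factor (a sum over a subset of $\Z^d$) finite, yielding \eqref{eq:error:trunc:lihw}. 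There is essentially no hard step here; the only point demanding care is the bookkeeping in the first paragraph, namely correctly identifying the support of the coefficient difference as $\bigcup_{\u\in U}(\Fud\setminus\mathrm{P}_{\u}I_{\u})$ and verifying that this union is disjoint — this is exactly where \cref{lemma:anova:sets} and \cref{lemma:anova:trunc_coeff} are used.
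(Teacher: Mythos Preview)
Your proposal is correct and follows essentially the same route as the paper's proof: Parseval plus the weight trick for \eqref{eq:error:trunc:l2hw}, and the $\L_\infty$ bound via the $\ell_1$ sum of coefficients followed by Cauchy--Schwarz for \eqref{eq:error:trunc:lihw}. If anything, you are slightly more careful than the paper in justifying why the Fourier support of the difference is the disjoint union $\bigcup_{\u\in U}(\Fud\setminus\mathrm{P}_{\u}I_{\u})$ via \cref{lemma:anova:sets} and \cref{lemma:anova:trunc_coeff}, which the paper takes for granted.
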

\begin{proof}
    In order to prove \eqref{eq:error:trunc:l2hw} we employ Parseval's identity and use the weight $w(\k)$ \begin{align*}
        \norm{\mathrm{T}_{U} f - S_{I(U)} f}{\Lt(\T^d)}^2 &= \sum_{\u \in U} \sum_{\k \in \Fud \setminus \mathrm{P}_{\u}I_{\u}} \abs{\fc{\k}{f}}^2 = \sum_{\u \in U} \sum_{\k \in \Fud \setminus \mathrm{P}_{\u}I_{\u}} \frac{w^2(\k)}{w^2(\k)}\abs{\fc{\k}{f}}^2 \\ &\!\!\leq \sum_{\u \in U} \frac{1}{N_{\u}^2} \sum_{\k \in \Fud \setminus \mathrm{P}_{\u}I_{\u}} w^2(\k) \abs{\fc{\k}{f}}^2 \leq \frac{1}{\min_{\u\in U} N_{\u}^2} \norm{f}{\st(\T^d)}^2.
    \end{align*}
    For the bound \eqref{eq:error:trunc:lihw} we estimate the norm by the absolute sum of the Fourier coefficients and use the Cauchy-Schwarz inequality \begin{align*}
        \norm{\mathrm{T}_{U} f - S_{I(U)} f}{\L_\infty(\T^d)} &= \sum_{\k \in \bigcup_{\u\in U} \Fud \setminus I(U)} \abs{\fc{\k}{f}} = \sum_{\k \in \bigcup_{\u\in U} \Fud \setminus I(U)} \frac{w(\k)}{w(\k)}\abs{\fc{\k}{f}} \\ &\leq \norm{f}{\st(\T^d)}\sqrt{\sum_{\u \in U} \sum_{\k \in \Fud\setminus \mathrm{P}_{\u}I_{\u}} \frac{1}{w^2(\k)}}.
    \end{align*}
\end{proof}
\begin{lemma}
     Let $f \in \aw(\T^d)$ with $\fun{w}{\Z^d}{[1,\infty)}$ a weight function such that $\sum_{\k \in \Z^d} \frac{1}{w^2(\k)} < \infty$, and $I(U) \subset \Z^d$ a finite frequency index set of type \eqref{set:IUfull} with $U \subset \mathcal{P}(\D)$ and sets $I_{\u}$ as in \eqref{eq:approx:freq_weight}. Then the relative truncation error can be estimated as \begin{equation*}
        \frac{\norm{\mathrm{T}_{U} f - S_{I(U)} f}{\L_\infty(\T^d)}}{\norm{f}{\aw(\T^d)}} \leq \min \left\{ \frac{1}{\min_{\u\in U} N_{\u}}, \max_{\u \in U} \sqrt{ \sum_{\k \in \Fud\setminus \mathrm{P}_{\u}I_{\u}}\frac{1}{w^2(\k)}} \right\}. 
    \end{equation*}
\end{lemma}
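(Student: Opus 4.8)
The plan is to first pin down the Fourier coefficients of the error function $\mathrm{T}_U f - S_{I(U)}f$, then bound its $\L_\infty$-norm by the absolute sum of those coefficients, and finally establish the two quantities appearing inside the minimum by two separate estimates, of which the second is a variant of the argument for \cref{lem:error:6} adapted to the Wiener algebra.

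First I would observe, exactly as in the proof of \cref{lem:error:6}, that by \cref{lemma:anova:trunc_coeff} the function $\mathrm{T}_U f$ has Fourier coefficient $\fc{\k}{f}$ precisely on $\bigcup_{\u\in U}\Fud$, while $S_{I(U)}f$ has $\fc{\k}{f}$ on $I(U)=\bigcup_{\u\in U}\mathrm{P}_{\u}I_{\u}$, see \eqref{set:IUfull}. Since $\mathrm{P}_{\u}I_{\u}\subset\Fud$ and, by \cref{lemma:anova:sets}, the sets $\Fud$ are pairwise disjoint, the Fourier coefficients of $\mathrm{T}_U f - S_{I(U)}f$ are supported on the disjoint union $A\coloneqq\bigcup_{\u\in U}\bigl(\Fud\setminus\mathrm{P}_{\u}I_{\u}\bigr)$ and equal $\fc{\k}{f}$ there, so $\norm{\mathrm{T}_U f - S_{I(U)}f}{\L_\infty(\T^d)}\leq\sum_{\k\in A}\abs{\fc{\k}{f}}$. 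I would also record that, by \eqref{eq:pre:proj} and \eqref{eq:approx:freq_weight}, $\mathrm{P}_{\u}I_{\u}=\{\k\in\Fud\colon w(\k)\leq N_{\u}\}$, hence every $\k\in\Fud\setminus\mathrm{P}_{\u}I_{\u}$ satisfies $w(\k)>N_{\u}$.

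For the bound $1/\min_{\u\in U}N_{\u}$ I would write $\abs{\fc{\k}{f}}=w(\k)^{-1}\bigl(w(\k)\abs{\fc{\k}{f}}\bigr)$ and use that on $\Fud\setminus\mathrm{P}_{\u}I_{\u}$ one has $w(\k)^{-1}<N_{\u}^{-1}\leq(\min_{\u\in U}N_{\u})^{-1}$, so summing over the disjoint pieces of $A$ gives $\sum_{\k\in A}\abs{\fc{\k}{f}}\leq(\min_{\u\in U}N_{\u})^{-1}\sum_{\k\in A}w(\k)\abs{\fc{\k}{f}}\leq(\min_{\u\in U}N_{\u})^{-1}\norm{f}{\aw(\T^d)}$; this is the verbatim analogue of \eqref{eq:error:trunc:l2hw}. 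For the bound $\max_{\u\in U}\sqrt{\sum_{\k\in\Fud\setminus\mathrm{P}_{\u}I_{\u}}w^{-2}(\k)}$ I would split $\sum_{\k\in A}=\sum_{\u\in U}\sum_{\k\in\Fud\setminus\mathrm{P}_{\u}I_{\u}}$ and, for each fixed $\u$, apply Cauchy--Schwarz with the splitting $\abs{\fc{\k}{f}}=w(\k)^{-1}\cdot\bigl(w(\k)\abs{\fc{\k}{f}}\bigr)$, followed by the elementary inequality $\norm{\b a}{\ell_2}\leq\norm{\b a}{\ell_1}$ applied to $\bigl(w(\k)\abs{\fc{\k}{f}}\bigr)_{\k}$. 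This replaces the appeal to the $\st$-norm used in \cref{lem:error:6}, which is unavailable since here $f\in\aw(\T^d)$ only. Pulling the factor $\max_{\u\in U}\bigl(\sum_{\k\in\Fud\setminus\mathrm{P}_{\u}I_{\u}}w^{-2}(\k)\bigr)^{1/2}$ out of the sum over $\u$ (it is finite because $\sum_{\k\in\Z^d}w^{-2}(\k)<\infty$) leaves $\sum_{\u\in U}\sum_{\k\in\Fud\setminus\mathrm{P}_{\u}I_{\u}}w(\k)\abs{\fc{\k}{f}}=\sum_{\k\in A}w(\k)\abs{\fc{\k}{f}}\leq\norm{f}{\aw(\T^d)}$. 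Combining the two estimates and taking the smaller one proves the claim.

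The main obstacle is the slightly non-routine point that the second estimate carries a $\max$ over $\u$ rather than a sum over $\u$ (as in \eqref{eq:error:trunc:lihw}): this forces the Cauchy--Schwarz step to be performed term-by-term in $\u$ and then requires the $\ell_2\le\ell_1$ passage on the weighted coefficients so that the residual double sum collapses to $\norm{f}{\aw(\T^d)}$. Everything else is routine bookkeeping already carried out for the $\st$ case.
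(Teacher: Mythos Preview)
Your proof is correct and follows essentially the route the paper intends: the paper's own proof consists of the single sentence ``the proof requires similar steps to the proof of \cref{lem:error:6}'', and you have carried out precisely those steps, including the one genuine adaptation needed---applying Cauchy--Schwarz separately on each $\Fud\setminus\mathrm{P}_{\u}I_{\u}$ and then invoking $\norm{\cdot}{\ell_2}\le\norm{\cdot}{\ell_1}$ on the weighted coefficients so that the residual sum collapses to $\norm{f}{\aw(\T^d)}$ rather than $\norm{f}{\st(\T^d)}$. Your identification of this as the non-routine point, and of why it forces the $\max_{\u\in U}$ rather than a sum, is exactly right.
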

\begin{proof}
	The proof requires similar steps to the proof of \cref{lem:error:6}. 
\end{proof}

For the aliasing error in \eqref{eq:approx_error}, we start by considering the black-box approximation case where we solve the least-squares problem as described in \cref{sec:approx:ls}. 
\begin{theorem}\label{thm:anova:error:bb}                                                                                                 
    Let $f \in \st(\T^d)$ with a weight function $\fun{w}{\Z^d}{[1,\infty)}$ such that $\sum_{\k \in \Z^d} \frac{1}{w^2(\k)} < \infty$ and $I(U) \subset \Z^d$ a finite frequency index set of type \eqref{set:IUfull} with sets $I_{\u}$ as in \eqref{eq:approx:freq_weight}. Moreover, we have a reconstructing rank-1 lattice $\Lambda(\b z, M, I(U))$ for a generating vector $\b z \in \Z^d$ and lattice size $M \in \N$. Then the aliasing error can be estimated as \begin{equation}\label{eq:error:al:l2a2}
        \frac{\norm{S_{I(U)} f - S_{I(U)}^{\Lambda(\b z, M, I(U))} f}{\Lt(\T^d)}}{\norm{f}{\st(\T^d)}} \leq \sqrt{\sum_{\k \in \Z^d\setminus I(U)} \frac{1}{w^2(\k)}}.
    \end{equation} Furthermore, if $f \in \aw(\T^d)$ we get for the $\L_\infty$-norm \begin{equation}\label{eq:error:al:lia2}
        \frac{\norm{S_{I(U)} f - S_{I(U)}^{\Lambda(\b z, M, I(U))} f}{\L_\infty(\T^d)}}{\norm{f}{\aw(\T^d)}} \leq  \frac{1}{\min_{\u\in U} N_{\u}}.
    \end{equation}
\end{theorem}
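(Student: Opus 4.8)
The plan is to combine the explicit aliasing formula \eqref{eq:pre:error} for reconstructing rank-1 lattices with the combinatorial consequences of the reconstruction property of $\Lambda(\b z, M, I(U))$. Since $\Lambda(\b z, M, I(U))$ is reconstructing, the solution of the least-squares problem \eqref{eq:approx:gen_ls} is unique and, by \eqref{eq:approx:mpinv}, given by $\tfrac1M \F_{I(U)}^\ast \y$; hence its entries satisfy $\hat f_{\k} = \fc{\k}{f} + \sum_{\h \in \Lambda^\bot(\b z,M)\setminus\{\b 0\}} \fc{\k+\h}{f}$ for $\k \in I(U)$ by \eqref{eq:pre:error}. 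Subtracting $S_{I(U)} f$, the object to estimate is the trigonometric polynomial supported on $I(U)$ whose $\k$-th Fourier coefficient equals $\sum_{\h\in\Lambda^\bot(\b z,M)\setminus\{\b 0\}}\fc{\k+\h}{f}$.

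\textbf{Key combinatorial step.} The defining condition of $\Lambda(\b z,M,I(U))$, namely $\b m\cdot\b z\not\equiv\b 0\bmod M$ for all $\b m\in\D(I(U))\setminus\{\b 0\}$ (cf.\ \eqref{eq:pre:di}), is equivalent to $\D(I(U))\cap\Lambda^\bot(\b z,M)=\{\b 0\}$. From this I would deduce two facts: (i) for every $\k\in I(U)$ and every $\h\in\Lambda^\bot(\b z,M)\setminus\{\b 0\}$ one has $\k+\h\notin I(U)$, since otherwise $\h=(\k+\h)-\k\in\D(I(U))\setminus\{\b 0\}$ while $\h\cdot\b z\equiv\b 0$; and (ii) the map $(\k,\h)\mapsto\k+\h$ is injective on $I(U)\times(\Lambda^\bot(\b z,M)\setminus\{\b 0\})$, because $\k_1+\h_1=\k_2+\h_2$ forces $\k_1-\k_2=\h_2-\h_1\in\Lambda^\bot(\b z,M)$, hence $\k_1=\k_2$. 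Thus all frequencies occurring in the double sum above are pairwise distinct and lie in $\Z^d\setminus I(U)$; this is what lets a ``square of a sum'' collapse to an honest sum over a disjoint frequency set.

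\textbf{The two estimates.} For \eqref{eq:error:al:l2a2} I would apply Parseval, rewrite each inner sum $\sum_{\h}\fc{\k+\h}{f}$ as $\sum_{\h}w^{-1}(\k+\h)\cdot w(\k+\h)\fc{\k+\h}{f}$ and apply Cauchy--Schwarz: by fact~(i) the first factor is bounded, uniformly in $\k$, by $\sum_{\k'\in\Z^d\setminus I(U)}w^{-2}(\k')$ and pulled out, while by fact~(ii) the sum over $\k\in I(U)$ of the second factors is at most $\sum_{\k'\in\Z^d}w^2(\k')\abs{\fc{\k'}{f}}^2=\norm{f}{\st(\T^d)}^2$. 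For \eqref{eq:error:al:lia2} I would bound the $\L_\infty$-norm by $\sum_{\k\in I(U)}\abs{\fc{\k}{f}-\hat f_{\k}}\le\sum_{\k\in I(U)}\sum_{\h}\abs{\fc{\k+\h}{f}}$, majorize this via facts (i)--(ii) by $\sum_{\k'\in\Z^d\setminus I(U)}\abs{\fc{\k'}{f}}$, factor out $w^{-1}(\k')$, and use that, by the weight-based construction \eqref{eq:approx:freq_weight} of the sets $I_{\u}$ together with the splitting of $\Z^d$ along the $\Fud$ from \cref{lemma:anova:sets}, $w(\k')\ge\min_{\u\in U}N_{\u}$ on $\Z^d\setminus I(U)$, so that the remaining sum is at most $\norm{f}{\aw(\T^d)}$. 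I expect the combinatorial step to be the main obstacle; the rest is Parseval, Cauchy--Schwarz, and bookkeeping with the index-set structure.
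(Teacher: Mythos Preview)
Your $\L_2$ argument is correct and is essentially the paper's proof. The paper invokes \cite[Lemma~8.13]{PlPoStTa18} for precisely your facts (i) and (ii) (the shifted dual-lattice cosets $M_{\k}=\{\k+\h:\h\in\Lambda^\bot(\b z,M)\setminus\{\b 0\}\}$ are pairwise disjoint and contained in $\Z^d\setminus I(U)$), then applies Parseval and Cauchy--Schwarz in the same way. The only cosmetic difference is that you bound the factor $\sum_{\h}w^{-2}(\k+\h)$ uniformly in $\k$ and collapse $\sum_{\k}\sum_{\h}w^2(\k+\h)\abs{\fc{\k+\h}{f}}^2$ via injectivity, whereas the paper does the reverse (it bounds the weighted-coefficient factor by $\norm{f}{\st(\T^d)}^2$ uniformly and sums the $w^{-2}$-factor via disjointness); both orderings give the same bound.

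Your $\L_\infty$ argument has a genuine gap. The assertion that $w(\k')\ge\min_{\u\in U}N_{\u}$ for every $\k'\in\Z^d\setminus I(U)$ is not true as stated: it holds when $\k'\in\Fud$ with $\u\in U$ (since then $\k'_{\u}\notin I_{\u}$ forces $w(\k')>N_{\u}$ by \eqref{eq:approx:freq_weight}), but for $\k'\in\mathbb{F}_{\v}^{(d)}$ with $\v\notin U$ nothing in the hypotheses prevents $w(\k')$ from being as small as $1$. Hence you cannot factor out $(\min_{\u\in U}N_{\u})^{-1}$ from the full sum $\sum_{\k'\in\Z^d\setminus I(U)}\abs{\fc{\k'}{f}}$ that your facts (i)--(ii) produce. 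The paper does not pass through this unrestricted complement: following \cite[Theorem~8.14]{PlPoStTa18} it bounds the $\L_\infty$-aliasing directly by $\sum_{\u\in U}\sum_{\k\in\Fud\setminus\mathrm{P}_{\u}I_{\u}}\abs{\fc{\k}{f}}$, a sum restricted to frequencies whose support set lies in $U$, and on that range the weight lower bound you want \emph{is} available termwise. Your route via all of $\Z^d\setminus I(U)$ loses exactly this control; to repair it you would need to argue separately that the aliasing contributions landing in $\bigcup_{\v\notin U}\mathbb{F}_{\v}^{(d)}$ can be discarded or absorbed, which is the step your proposal does not supply.
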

\begin{proof}
    We show the bound \eqref{eq:error:al:l2a2} by first applying Parseval's identity and \eqref{eq:pre:error} \begin{align*}
        \norm{S_{I(U)} f - S_{I(U)}^{\Lambda(\b z, M, I(U))} f}{\Lt(\T^d)}^2 &= \sum_{\k \in I(U)} \abs{\hat{f}_{\k} - \fc{\k}{f}}^2 \\ 
        &= \sum_{\k \in I(U)} \abs{\sum_{\b h \in \Lambda^\bot(\b{z}, M)\setminus\{\b 0\}} \fc{\k + \b h}{f}}^2.
    \end{align*} We then incorporate the weight and utilize the Cauchy-Schwarz inequality to obtain \begin{align*}
	    \norm{S_{I(U)} f - S_{I(U)}^{\Lambda(\b z, M, I(U))} f}{\Lt(\T^d)}^2 &= \sum_{\k \in I(U)} \abs{\sum_{\b h \in \Lambda^\bot(\b{z}, M)\setminus\{\b 0\}} \frac{w(\k+\h)}{w(\k+\h)}\fc{\k + \b h}{f}}^2 \\ 
	    &\leq \sum_{\k \in I(U)} \left( \sum_{\b h \in \Lambda^\bot(\b{z}, M)\setminus\{\b 0\}} w^2(\k+\h)\abs{\fc{\k + \b h}{f}}^2 \right) \times \\
	    &\quad\quad\times \left( \sum_{\b h \in \Lambda^\bot(\b{z}, M)\setminus\{\b 0\}} \frac{1}{w^2(\k+\b h)} \right) \\
	\end{align*}
	From \cite[Lemma 8.13]{PlPoStTa18} we know that for fixed $\k \in I(U)$ we have disjoint sets $$M_{\k} \coloneqq \left\{ \k + \h \colon \h \in \Lambda^\bot(\b{z}, M)\setminus\{\b 0\} \right\} \subset \Z^d \setminus I(U).$$ This means we are able to estimate \begin{align*}
		\sum_{\b h \in \Lambda^\bot(\b{z}, M)\setminus\{\b 0\}} w^2(\k+\h)\abs{\fc{\k + \b h}{f}}^2 &= \sum_{\b\ell \in M_{\k}} w^2(\b\ell)\abs{\fc{\b\ell}{f}}^2 \\
		&\leq \sum_{\b\ell \in \Z^d} w^2(\b\ell)\abs{\fc{\b\ell}{f}}^2 = \norm{f}{\st(\T^d)}^2
	\end{align*} such that $$\norm{S_{I(U)} f - S_{I(U)}^{\Lambda(\b z, M, I(U))} f}{\Lt(\T^d)}^2 \leq \norm{f}{\st(\T^d)}^2 \sum_{\k \in I(U)} \sum_{\b h \in \Lambda^\bot(\b{z}, M)\setminus\{\b 0\}} \frac{1}{w^2(\k+\h)}.$$ Using that the sets $M_{\k}$ are disjoint and $\bigcup_{\k\in I(U)}M_{\k} \subset \Z^d\setminus I(U)$ yields
 	\begin{align*}
		\sum_{\k \in I(U)} \sum_{\b h \in \Lambda^\bot(\b{z}, M)\setminus\{\b 0\}} \frac{1}{w^2(\k+\h)} &= \sum_{\k \in I(U)} \sum_{\b\ell \in M_{\k}} \frac{1}{w^2(\b\ell)} \\  
		&= \sum_{\b\ell \in \bigcup_{\k \in I(U)} M_{\k}} \frac{1}{w^2(\b\ell)} \leq \sum_{\b\ell \in \Z^d \setminus I(U)} \frac{1}{w^2(\b\ell)}.
	\end{align*}
 	The $\L_\infty$-bound \eqref{eq:error:al:lia2} is obtained similarly to the method used in the proof of \cite[Theorem 8.14]{PlPoStTa18}. We proceed as follows \begin{align*}
    	\norm{S_{I(U)} f - S_{I(U)}^{\Lambda(\b z, M, I(U))} f}{\L_\infty(\T^d)} &\leq \sum_{\u \in U} \sum_{\k \in \Fud \setminus \mathrm{P}_{\u}I_{\u}} \abs{\fc{\k}{f}} \\
    	&= \sum_{\u \in U} \sum_{\k \in \Fud \setminus \mathrm{P}_{\u}I_{\u}} \frac{w(\k)}{w(\k)}\abs{\fc{\k}{f}} \\
    	&\leq \frac{1}{\min_{\u\in U} N_{\u}} \sum_{\u \in U} \sum_{\k \in \Fud \setminus \mathrm{P}_{\u}I_{\u}} w(\k) \abs{\fc{\k}{f}}.
    \end{align*}
	The result is obtained through estimating the sum by $\norm{f}{\aw(\T^d)}$.
\end{proof}

In the following we consider the approximation error for scattered data approximation with a fixed node set $X \subset \T^d$. Previously, we assumed that the index set $I(U)$ and the node set $X$ are such that the Fourier matrix $\F_{I(U)}$ has full rank. In this case the least-squares problem \eqref{eq:approx:gen_ls} has a unique solution. Assuming that the nodes in $X$ are i.i.d.~random variables that are uniformly distributed in $\T^d$, it is possible to achieve good bounds on the approximation error, see \cite{BaGr03, GrPoRa07, KaeUlVo19, MoeUl20}. 

\begin{lemma}\label{lem:error:l2est}
    Let $f \in \st(\T^d)$ with a weight function $\fun{w}{\Z^d}{[1,\infty)}$ such that $\sum_{\k \in \Z^d} \frac{1}{w^2(\k)} < \infty$, $X \subset \T^d$ a finite set of i.i.d.~uniformly distributed points, $\b y = (f(\x))_{\x\in X}$, and $I(U) \subset \Z^d$ a finite frequency index set of type \eqref{set:IUfull} with $U \subset \mathcal{P}(\D)$ a subset of ANOVA terms and sets $I_{\u}$ as in \eqref{eq:approx:freq_weight}. If for the number of frequencies we have $\abs{I(U)} \leq \frac{\abs{X}}{7r\log\abs{X}}, r > 0$, then \begin{equation*}
        \sup_{\norm{f}{\st(\T^d)} \leq 1} \frac{\sum_{\x\in X} \abs{\left(f-S_{I(U)} f\right)(\x)}^2}{\abs{X}} \leq 5 \max\left( \theta_{I(U)}^2,  \frac{8r\kappa^2\log\abs{X}}{\abs{X}}\!\!\! \sum_{\k\in\Z^d\setminus I(U)} \frac{1}{w^2(\k)} \right)
    \end{equation*} with a probability of at least $1-3\abs{X}^{1-r}$ for $\theta_{I(U)} = \norm{f - S_{I(U)}f}{\L_2(\T^d)}$ and $\kappa = \frac{1+\sqrt{5}}{2}$.
\end{lemma}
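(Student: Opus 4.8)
The plan is to reduce everything to a single random scalar, $Z \coloneqq \tfrac{1}{\abs{X}}\sum_{\x\in X}\abs{(f-S_{I(U)}f)(\x)}^2$, prove a concentration estimate for $Z$ against its mean that is \emph{uniform} over the unit ball of $\st(\T^d)$, and then unwind a self-referential quadratic inequality. Set $N\coloneqq\abs{X}$ and $h\coloneqq f-S_{I(U)}f=\sum_{\k\in\Z^d\setminus I(U)}\fc{\k}{f}\,\e^{2\pi\i\k\cdot\circ}$. From $\norm{f}{\st(\T^d)}\le 1$ one gets $\sum_{\k\notin I(U)}w^2(\k)\abs{\fc{\k}{f}}^2\le 1$, so Cauchy--Schwarz together with $\sum_{\k\in\Z^d}w^{-2}(\k)<\infty$ shows that $h$ is continuous with $\norm{h}{\mathrm{L}_\infty(\T^d)}^2\le\Sigma\coloneqq\sum_{\k\notin I(U)}w^{-2}(\k)$ and $\norm{h}{\Lt(\T^d)}^2=\theta_{I(U)}^2$; since the nodes are i.i.d.\ uniform, $\mathbb{E}_X Z=\theta_{I(U)}^2$. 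So it suffices to bound $Z$ against $\theta_{I(U)}^2$ uniformly in $f$.

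For that deviation I would not fix $f$ but argue at the level of the (infinite) Gram operator, since the estimate has to hold simultaneously for all admissible $f$ on one draw of $X$. With $\b A\coloneqq(\e^{2\pi\i\k\cdot\x})_{\x\in X,\,\k\in\Z^d\setminus I(U)}$ we have $Z=\tfrac1N\langle\b A^{\ast}\b A\,\b c,\b c\rangle$, $\b c=(\fc{\k}{f})_{\k\notin I(U)}$, and $\mathbb{E}_X[\tfrac1N\b A^{\ast}\b A]=\mathrm{I}$. Conjugating with $\b D^{-1}\coloneqq\diag(w(\k)^{-1})_{\k\notin I(U)}$ turns $\tfrac1N\b A^{\ast}\b A$ into an average $\tfrac1N\sum_{\x\in X}\b y_{\x}\b y_{\x}^{\ast}$ of rank-one operators with $\norm{\b y_{\x}}{2}^2=\Sigma$ and $\mathbb{E}_X[\b y_{\x}\b y_{\x}^{\ast}]=\b D^{-2}$, which is \emph{finite trace}: $\trace(\b D^{-2})=\Sigma$ and its operator norm is at most $1$. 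Applying a matrix Chernoff/Bernstein inequality for finite-trace operators (Tropp; following the arguments of \cite{KaeUlVo19, MoeUl20}) in the sampling regime fixed by $\abs{I(U)}\le N/(7r\log N)$, I expect to obtain: with probability at least $1-3N^{1-r}$, simultaneously for every $f$ with $\norm{f}{\st(\T^d)}\le 1$,
\begin{equation}\label{eq:plan:rel}
Z\;\le\;\theta_{I(U)}^2+\sqrt{\,Z\cdot R'\,},\qquad R'\coloneqq\frac{8r\log N}{N}\,\Sigma .
\end{equation}
Measuring the error relative to $Z$ itself (rather than only in operator norm) is what the finite-trace structure makes possible and what produces the clean final constant; note that $\kappa^2 R'$ is exactly the second term inside the maximum in the claim.

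It remains to solve \eqref{eq:plan:rel}. Read as a quadratic in $\sqrt Z$ it gives $\sqrt Z\le\tfrac12\sqrt{R'}+\sqrt{\theta_{I(U)}^2+R'/4}$. If $\theta_{I(U)}^2\ge R'$ the right-hand side is at most $\tfrac12\theta_{I(U)}+\tfrac{\sqrt5}{2}\theta_{I(U)}=\kappa\,\theta_{I(U)}$ with $\kappa=\tfrac{1+\sqrt5}{2}$, hence $Z\le\kappa^2\theta_{I(U)}^2$; if $\theta_{I(U)}^2<R'$ it is at most $\kappa\sqrt{R'}$, hence $Z\le\kappa^2 R'$. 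Since $R=\kappa^2R'$ and $\kappa^2=\tfrac{3+\sqrt5}{2}<5$, in either case $Z\le\kappa^2\max(\theta_{I(U)}^2,R')=\max(\kappa^2\theta_{I(U)}^2,R)\le 5\max(\theta_{I(U)}^2,R)$; taking the supremum over $\norm{f}{\st(\T^d)}\le 1$ and recalling the definitions of $Z$ and $\theta_{I(U)}$ finishes the proof.

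The main obstacle is the middle step: establishing \eqref{eq:plan:rel} uniformly over the unit ball with failure probability only $3N^{1-r}$. This forces one to pass from the not-trace-class tail Gram operator to the finite-trace operator $\b D^{-2}$, to invoke the matrix concentration inequality precisely in the regime $\abs{I(U)}\le N/(7r\log N)$, and to bookkeep constants so that the deviation is controlled by $\sqrt{Z\cdot R'}$ rather than by an additive operator-norm term. The reduction in the first step and the golden-ratio computation in the last are comparatively routine.
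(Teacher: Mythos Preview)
The paper's own proof is a single sentence: the lemma is obtained as a special case of \cite[Theorem~5.1]{MoeUl20}. You are instead sketching how one might prove that cited result from scratch, so the two ``proofs'' are not really comparable in depth --- yours is an outline of the underlying argument, the paper's is a black-box citation.

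Your reduction in the first paragraph (pointwise bound $\norm{h}{\mathrm{L}_\infty}^2\le\Sigma$, mean $\mathbb{E}Z=\theta_{I(U)}^2$) and the golden-ratio algebra in the last paragraph are correct and indeed recover exactly the constants $\kappa=\tfrac{1+\sqrt5}{2}$ and $5>\kappa^2$ that appear in the statement; that part is clean. The genuine content, as you yourself flag, is the self-referential concentration inequality \eqref{eq:plan:rel}, and here the proposal remains a plan rather than a proof: you write ``I expect to obtain'' and cite matrix Chernoff/Bernstein for finite-trace operators, but do not carry out the estimate. Two points would need to be pinned down to turn this into an argument. First, the precise form $Z\le\theta_{I(U)}^2+\sqrt{ZR'}$ is not what a standard matrix Bernstein bound delivers directly; one usually gets an additive deviation $|Z-\mathbb{E}Z|$ controlled by a variance term plus an $L_\infty$ term, and extracting a multiplicative $\sqrt{Z}$ on the right requires an extra step (or a different inequality) that you should make explicit. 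Second, the hypothesis $\abs{I(U)}\le \abs{X}/(7r\log\abs{X})$ concerns the \emph{kept} frequencies, whereas your Gram-operator argument lives entirely on the \emph{tail} $\Z^d\setminus I(U)$; you invoke that hypothesis only by saying ``in the sampling regime fixed by'' it, without explaining where it actually enters. In \cite{MoeUl20} this condition is tied to the well-conditioning of the design matrix on $I(U)$ and to a union-bound bookkeeping that produces the failure probability $3\abs{X}^{1-r}$; your sketch does not make that connection.

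In short: the skeleton is right and matches what underlies the cited theorem, but the middle step is asserted rather than proved, and the role of the $\abs{I(U)}$ hypothesis in your argument is left unexplained.
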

\begin{proof}
    The setting of this lemma is a special case of \cite[Theorem 5.1]{MoeUl20}.
\end{proof}
The following theorem deals with the actual approximation error by incorporating the previous lemma.
\begin{theorem}\label{thm:error:approx}
    Let $f \in \st(\T^d)$ with a weight function $\fun{w}{\Z^d}{[1,\infty)}$ such that $\sum_{\k \in \Z^d} \frac{1}{w^2(\k)} < \infty$, $I(U) \subset \Z^d$ a finite frequency index set of type \eqref{set:IUfull} with sets $I_{\u}$ as in \eqref{eq:approx:freq_weight}. Moreover, $U \subset \mathcal{P}(\D)$, and $S_{I(U)}^X f$ are the corresponding approximate Fourier partial sum obtained through the scattered data approximation method described in \cref{sec:approx:ls}. If the elements of $X \subset \T^d$ are i.i.d.~random variables uniformly distributed on $\T^d$ and for the number of frequencies we have $\abs{I(U)} \leq \frac{\abs{X}}{7r\log\abs{X}}, r > 0$, then
    \begin{equation*}
        \frac{\norm{S_{I(U)} f - S_{I(U)}^X f}{\L_2(\T^d)}}{\norm{f}{\st(\T^d)}} \leq \sqrt{ 8 \max\left( \theta_{I(U)}^2, \kappa^2 \frac{\log\abs{X}}{\abs{X}} \sum_{\k\in\Z^d\setminus I(U)} \frac{1}{w^2(\k)} \right) }
    \end{equation*} with a probability of at least $1-3\abs{X}^{1-r}$ for $\theta_{I(U)} = \norm{f - S_{I(U)}f}{\L_2(\T^d)}$ and $\kappa = \frac{1+\sqrt{5}}{2}$.
\end{theorem}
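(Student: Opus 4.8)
The plan is to treat $S_{I(U)}^X f$ as the discrete least-squares projection of $f$ onto the polynomial space $\Pi_{I(U)}$ and to compare it with the $\Lt(\T^d)$-orthogonal projection $S_{I(U)}f$, both of which lie in $\Pi_{I(U)}$. Since $f-S_{I(U)}f$ is $\Lt(\T^d)$-orthogonal to $\Pi_{I(U)}$, the Pythagorean identity gives
\[
    \norm{f-S_{I(U)}^X f}{\Lt(\T^d)}^2 = \norm{S_{I(U)}f-S_{I(U)}^X f}{\Lt(\T^d)}^2 + \theta_{I(U)}^2 ,
\]
so that $\norm{S_{I(U)}f-S_{I(U)}^X f}{\Lt(\T^d)}^2 = \norm{f-S_{I(U)}^X f}{\Lt(\T^d)}^2-\theta_{I(U)}^2 \le \norm{f-S_{I(U)}^X f}{\Lt(\T^d)}^2$. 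It therefore suffices to bound the recovery error $\norm{f-S_{I(U)}^X f}{\Lt(\T^d)}$ of the least-squares reconstruction from the random nodes $X$.

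For this I would invoke the scattered-data least-squares estimate that also underlies \cref{lem:error:l2est}, namely \cite[Theorem 5.1]{MoeUl20}, applied to the $\abs{I(U)}$-dimensional space $\Pi_{I(U)}$. The oversampling hypothesis $\abs{I(U)}\le\abs{X}/(7r\log\abs{X})$ is precisely the regime in which the underlying matrix-concentration argument yields, with probability at least $1-3\abs{X}^{1-r}$, that the normalized Gram matrix $\tfrac1{\abs{X}}\F_{I(U)}^{\ast}\F_{I(U)}$ has its spectrum in $[\tfrac12,\tfrac32]$; in particular $\F_{I(U)}$ has full rank, so the least-squares solution in \eqref{eq:approx:last} is unique as assumed, and the discrete semi-norm $g\mapsto\big(\tfrac1{\abs{X}}\sum_{\x\in X}\abs{g(\x)}^2\big)^{1/2}$ is equivalent to $\norm{\cdot}{\Lt(\T^d)}$ on $\Pi_{I(U)}$ with constants $\tfrac12$ and $\tfrac32$ (the value $\kappa=(1+\sqrt5)/2$ enters through these conditioning constants). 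On this event the quoted theorem controls the recovery error in the form $\norm{f-S_{I(U)}^X f}{\Lt(\T^d)}^2 \le 8\max\big(\theta_{I(U)}^2,\ \kappa^2\tfrac{\log\abs{X}}{\abs{X}}\,V_{I(U)}\big)$, where $\theta_{I(U)}$ is the best $\Lt(\T^d)$-approximation error of $f$ from $\Pi_{I(U)}$ and $V_{I(U)}$ is the abstract variance term of that estimate.

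It then remains to specialize $V_{I(U)}$ to the present Sobolev-type situation. Exactly as in the proofs of \cref{lem:error:6} and \cref{thm:anova:error:bb}, I would split $\abs{\fc{\k}{f}}=w(\k)^{-1}\cdot w(\k)\abs{\fc{\k}{f}}$ and apply the Cauchy-Schwarz inequality over $\Z^d\setminus I(U)$, which turns the variance term into $\norm{f}{\st(\T^d)}^2\sum_{\k\in\Z^d\setminus I(U)}w^{-2}(\k)$; the hypothesis $\sum_{\k\in\Z^d}w^{-2}(\k)<\infty$ makes this finite. Substituting this bound, dividing by $\norm{f}{\st(\T^d)}^2$ (and using $\theta_{I(U)}\le\norm{f}{\st(\T^d)}$ so that the first term of the maximum appears in the stated normalization), and taking a square root yields the claimed inequality; since no random event beyond the one above is needed, it holds with probability at least $1-3\abs{X}^{1-r}$.

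The step I expect to be the main obstacle is the constant bookkeeping. The Pythagorean reduction is lossless, but one has to choose exactly the right variant of \cite[Theorem 5.1]{MoeUl20} and the right conditioning constants so that the prefactor is $8$ and the logarithmic factor is $\tfrac{\log\abs{X}}{\abs{X}}$ with no extra $r$; the looser route of passing through the purely discrete bound of \cref{lem:error:l2est} together with the norm equivalence would accumulate additional factors. A secondary point is to check that full rank of $\F_{I(U)}$, the norm equivalence on $\Pi_{I(U)}$, and the recovery bound all hold on one and the same random event, so that the failure probability is not inflated by a union bound beyond $3\abs{X}^{1-r}$.
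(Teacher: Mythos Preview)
Your Pythagorean reduction is correct, but the route you take afterward diverges from the paper's and rests on a form of the cited result that you do not verify. You assume that \cite[Theorem~5.1]{MoeUl20} directly controls the \emph{continuous} recovery error $\norm{f-S_{I(U)}^X f}{\Lt(\T^d)}^2$ with the prefactor $8$; in the paper, however, that theorem is invoked (as \cref{lem:error:l2est}) only for the \emph{discrete} residual $\tfrac1{\abs{X}}\sum_{\x\in X}\abs{(f-S_{I(U)}f)(\x)}^2$. Your own caveat about ``choosing exactly the right variant'' is precisely the point at which your argument is incomplete.

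The paper does not pass through the full recovery error $\norm{f-S_{I(U)}^X f}{\Lt(\T^d)}$ at all. It works directly on the coefficient vectors: by Parseval, $\norm{S_{I(U)}f-S_{I(U)}^X f}{\Lt(\T^d)}=\norm{\hat{\b f}-\hat{\b c}}{2}$ with $\hat{\b c}=(\fc{\k}{f})_{\k\in I(U)}$, and since $\hat{\b f}=(\F_{I(U)}^\ast\F_{I(U)})^{-1}\F_{I(U)}^\ast\b y$ one has $\hat{\b f}-\hat{\b c}=(\F_{I(U)}^\ast\F_{I(U)})^{-1}\F_{I(U)}^\ast(\b y-\F_{I(U)}\hat{\b c})$. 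Taking spectral norms splits this into a conditioning factor and the discrete residual $\norm{\b y-\F_{I(U)}\hat{\b c}}{2}^2=\sum_{\x\in X}\abs{(f-S_{I(U)}f)(\x)}^2$. The two pieces are then bounded by two \emph{separate} results from \cite{MoeUl20}: Theorem~2.3 there gives $\norm{(\F_{I(U)}^\ast\F_{I(U)})^{-1}\F_{I(U)}^\ast}{2}\le\sqrt{2/\abs{X}}$ on the high-probability event, and \cref{lem:error:l2est} (their Theorem~5.1) handles the discrete residual. No Pythagorean step and no bound on $\norm{f-S_{I(U)}^X f}{\Lt(\T^d)}$ are needed. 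What you call the ``looser route'' is in fact essentially the paper's route, except that the paper replaces your norm-equivalence step by the explicit Moore--Penrose factorisation together with the spectral-norm bound from \cite[Theorem~2.3]{MoeUl20}; this is how the stated constants are reached without the extra factors you feared.
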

\begin{proof}
    We denote the Fourier coefficients with $\hat{\b c} = (\fc{\k}{f})_{\k\in I(U)}$ and the approximate Fourier coefficients computed by \cref{alg} with $\hat{\b f} = (\hat{f}_{\k})_{\k\in I(U)}$. With Parseval's identity as well as the Moore-Penrose inverse we obtain \begin{align*}
        \norm{S_{I(U)} f - S_{I(U)}^X f}{\L_2(\T^d)} &= \sqrt{\sum_{\k \in I(U)} \abs{\hat{f}_{\k}-\fc{\k}{f}}^2} = \norm{\hat{\b f} - \hat{\b c}}{2} \\
        &= \norm{\left( \F_{I(U)}^\ast \F_{I(U)} \right)^{-1} \F_{I(U)}^\ast \b y - \hat{\b c}}{2} \\
        &= \norm{\left( \F_{I(U)}^\ast \F_{I(U)} \right)^{-1} \F_{I(U)}^\ast \left( \b y - \F_{I(U)} \hat{\b c} \right)}{2}.
    \end{align*}
    We use the properties of the spectral norm and estimate further \begin{align*}
        &\leq \norm{\left( \F_{I(U)}^\ast \F_{I(U)}  \right)^{-1}\F_{I(U)}^\ast}{2}  \norm{\b y - \F_{I(U)} \hat{\b c}}{2} \\
        &= \norm{\left( \F_{I(U)}^\ast \F_{I(U)}  \right)^{-1}\F_{I(U)}^\ast}{2} \sqrt{ \sum_{\x\in X} \abs{\left(f-S_{I(U)} f\right)(\x)}^2. }
    \end{align*} 
    Applying \cite[Theorem 2.3]{MoeUl20} yields \begin{equation*}
        \sup_{\norm{f}{\st(\T^d)} \leq 1} \norm{S_{I(U)} f - S_{I(U)}^X f}{\L_2(\T^d)} \leq \sup_{\norm{f}{\st(\T^d)} \leq 1} \sqrt{\frac{2}{\abs{X}} \sum_{\x\in X} \abs{\left(f-S_{I(U)} f\right)(\x)}^2 }.
    \end{equation*} Finally, we use \cref{lem:error:l2est} to obtain our bound \begin{equation*}
        \sup_{\norm{f}{\st(\T^d)} \leq 1} \norm{S_{I(U)} f - S_{I(U)}^X f}{\L_2(\T^d)} \leq \sqrt{ 8 \max\left( \theta_{I(U)}^2, \kappa^2 \frac{\log\abs{X}}{\abs{X}} \sum_{\k\in\Z^d\setminus I(U)} \frac{1}{w^2(\k)} \right) }
    \end{equation*} with a probability of at least $1-3\abs{X}^{1-r}$. 
\end{proof}

This concludes the consideration of the error of the presented method in both approximation scenarios. We were able to achieve bounds for $\Lt$ and $\L_{\infty}$ for functions in weighted Wiener algebras and Sobolev type spaces.

\section{Numerical Results}\label{sec:numeric}
\begin{figure}[ht]
	\centering
	\subfloat[B-spline $B_2$]{ \begin{tikzpicture}[scale=0.75]
			\begin{axis}[
				grid=major,
				xmin=-0.05,
				xmax=1.05,
				ymin=-0.04,
				ymax=1.77
				]
				\addplot[smooth, line width=2pt] coordinates {
					(0.0, 0.0)(0.01, 0.034641016151377546)(0.02, 0.06928203230275509)(0.03, 0.10392304845413262)(0.04, 0.13856406460551018)(0.05, 0.17320508075688773)(0.06, 0.20784609690826525)(0.07, 0.24248711305964282)(0.08, 0.27712812921102037)(0.09, 0.31176914536239786)(0.1, 0.34641016151377546)(0.11, 0.381051177665153)(0.12, 0.4156921938165305)(0.13, 0.4503332099679081)(0.14, 0.48497422611928565)(0.15, 0.5196152422706631)(0.16, 0.5542562584220407)(0.17, 0.5888972745734183)(0.18, 0.6235382907247957)(0.19, 0.6581793068761733)(0.2, 0.6928203230275509)(0.21, 0.7274613391789284)(0.22, 0.762102355330306)(0.23, 0.7967433714816835)(0.24, 0.831384387633061)(0.25, 0.8660254037844386)(0.26, 0.9006664199358162)(0.27, 0.9353074360871938)(0.28, 0.9699484522385713)(0.29, 1.0045894683899488)(0.3, 1.0392304845413263)(0.31, 1.0738715006927038)(0.32, 1.1085125168440815)(0.33, 1.143153532995459)(0.34, 1.1777945491468367)(0.35, 1.212435565298214)(0.36, 1.2470765814495914)(0.37, 1.2817175976009691)(0.38, 1.3163586137523466)(0.39, 1.3509996299037244)(0.4, 1.3856406460551018)(0.41, 1.4202816622064791)(0.42, 1.4549226783578568)(0.43, 1.4895636945092343)(0.44, 1.524204710660612)(0.45, 1.5588457268119895)(0.46, 1.593486742963367)(0.47, 1.6281277591147445)(0.48, 1.662768775266122)(0.49, 1.6974097914174997)(0.5, 1.7320508075688772)(0.51, 1.6974097914174997)(0.52, 1.662768775266122)(0.53, 1.6281277591147445)(0.54, 1.5934867429633668)(0.55, 1.5588457268119893)(0.56, 1.5242047106606118)(0.57, 1.4895636945092345)(0.58, 1.454922678357857)(0.59, 1.4202816622064793)(0.6, 1.3856406460551018)(0.61, 1.3509996299037244)(0.62, 1.3163586137523466)(0.63, 1.2817175976009691)(0.64, 1.2470765814495914)(0.65, 1.212435565298214)(0.66, 1.1777945491468365)(0.67, 1.1431535329954587)(0.68, 1.1085125168440813)(0.69, 1.073871500692704)(0.7, 1.0392304845413265)(0.71, 1.004589468389949)(0.72, 0.9699484522385713)(0.73, 0.9353074360871938)(0.74, 0.9006664199358162)(0.75, 0.8660254037844386)(0.76, 0.831384387633061)(0.77, 0.7967433714816834)(0.78, 0.7621023553303059)(0.79, 0.7274613391789283)(0.8, 0.6928203230275507)(0.81, 0.6581793068761731)(0.82, 0.6235382907247959)(0.83, 0.5888972745734183)(0.84, 0.5542562584220408)(0.85, 0.5196152422706632)(0.86, 0.48497422611928565)(0.87, 0.4503332099679081)(0.88, 0.4156921938165305)(0.89, 0.38105117766515295)(0.9, 0.34641016151377535)(0.91, 0.3117691453623978)(0.92, 0.2771281292110202)(0.93, 0.24248711305964263)(0.94, 0.20784609690826544)(0.95, 0.17320508075688787)(0.96, 0.1385640646055103)(0.97, 0.10392304845413272)(0.98, 0.06928203230275515)(0.99, 0.034641016151377574)(1.0, 0.0)
				};		
			\end{axis}
	\end{tikzpicture} }	\hfill 	\subfloat[B-spline $B_4$]{ \begin{tikzpicture}[scale=0.75]
			\begin{axis}[
				grid=major,
				xmin=-0.05,
				xmax=1.05,
				ymin=-0.04,
				ymax=1.96
				]
				\addplot[smooth, line width=2pt] coordinates {
					(0.0, 0.0)(0.01, 3.081239967139746e-5)(0.02, 0.0002464991973711797)(0.03, 0.0008319347911277312)(0.04, 0.0019719935789694375)(0.05, 0.003851549958924683)(0.06, 0.00665547832902185)(0.07, 0.01056865308728933)(0.08, 0.0157759486317555)(0.09, 0.022462239360448742)(0.1, 0.030812399671397463)(0.11, 0.04101130396263001)(0.12, 0.0532438266321748)(0.13, 0.06769484207806022)(0.14, 0.08454922469831463)(0.15, 0.10399184889096641)(0.16, 0.126207589054044)(0.17, 0.15138131958557574)(0.18, 0.17969791488358994)(0.19, 0.21134224934611515)(0.2, 0.2464991973711797)(0.21, 0.2853536333568118)(0.22, 0.3280904317010401)(0.23, 0.37489446680189287)(0.24, 0.4259506130573984)(0.25, 0.4814437448655848)(0.26, 0.5414354870257959)(0.27, 0.6054944659426309)(0.28, 0.6730660584220054)(0.29, 0.7435956412698345)(0.3, 0.816528591292032)(0.31, 0.8913102852945141)(0.32, 0.9673861000831938)(0.33, 1.0442014124639882)(0.34, 1.1212015992428104)(0.35, 1.1978320372255755)(0.36, 1.2735381032181992)(0.37, 1.3477651740265955)(0.38, 1.4199586264566795)(0.39, 1.4895638373143671)(0.4, 1.556026183405572)(0.41, 1.6187910415362072)(0.42, 1.6773037885121904)(0.43, 1.731009801139436)(0.44, 1.7793544562238595)(0.45, 1.8217831305713739)(0.46, 1.8577412009878949)(0.47, 1.8866740442793377)(0.48, 1.9080270372516144)(0.49, 1.921245556710643)(0.5, 1.9257749794623442)(0.51, 1.9212455567106443)(0.52, 1.908027037251617)(0.53, 1.8866740442793455)(0.54, 1.8577412009879)(0.55, 1.8217831305713739)(0.56, 1.779354456223866)(0.57, 1.731009801139436)(0.58, 1.6773037885121955)(0.59, 1.6187910415362097)(0.6, 1.556026183405577)(0.61, 1.4895638373143678)(0.62, 1.4199586264566832)(0.63, 1.3477651740265832)(0.64, 1.2735381032182074)(0.65, 1.1978320372255775)(0.66, 1.1212015992428148)(0.67, 1.0442014124639747)(0.68, 0.9673861000831989)(0.69, 0.891310285294522)(0.7, 0.81652859129204)(0.71, 0.7435956412698232)(0.72, 0.6730660584219932)(0.73, 0.6054944659426306)(0.74, 0.5414354870257954)(0.75, 0.4814437448655835)(0.76, 0.4259506130573959)(0.77, 0.37489446680188887)(0.78, 0.3280904317010415)(0.79, 0.2853536333568124)(0.8, 0.24649919737118325)(0.81, 0.21134224934611526)(0.82, 0.1796979148835875)(0.83, 0.15138131958557657)(0.84, 0.1262075890540436)(0.85, 0.10399184889096778)(0.86, 0.08454922469831536)(0.87, 0.06769484207806548)(0.88, 0.053243826632176736)(0.89, 0.041011303962633396)(0.9, 0.030812399671401758)(0.91, 0.02246223936045325)(0.92, 0.0157759486317593)(0.93, 0.01056865308729904)(0.94, 0.0066554783290182435)(0.95, 0.0038515499589242574)(0.96, 0.001971993578967988)(0.97, 0.0008319347911208663)(0.98, 0.00024649919736458443)(0.99, 3.0812399665441756e-5)(1.0, 0.0)
				};
			\end{axis}
	\end{tikzpicture} } \\ 	\centering 	\subfloat[B-spline $B_6$]{ \begin{tikzpicture}[scale=0.75]
			\begin{axis}[
				grid=major,
				xmin=-0.05,
				xmax=1.05,
				ymin=-0.04,
				ymax=2.18
				]
				\addplot[smooth, line width=2pt] coordinates {	
					(0.0, 0.0)(0.01, 2.5289692437520977e-8)(0.02, 8.092701580006713e-7)(0.03, 6.1453952623175955e-6)(0.04, 2.589664505602148e-5)(0.05, 7.903028886725306e-5)(0.06, 0.00019665264839416305)(0.07, 0.0004250438607974152)(0.08, 0.0008286926417926874)(0.09, 0.0014933310487431756)(0.1, 0.0025289692437520978)(0.11, 0.00407293025675519)(0.12, 0.006292884748613218)(0.13, 0.009389885774204475)(0.14, 0.013601403545517286)(0.15, 0.019204360194742483)(0.16, 0.026518164537365996)(0.17, 0.03590774621082436)(0.18, 0.04778595413644685)(0.19, 0.06260928724471007)(0.2, 0.08086457211503663)(0.21, 0.10305380914231049)(0.22, 0.12967899872141667)(0.23, 0.16122696743177728)(0.24, 0.19815419422189037)(0.25, 0.24087163659386523)(0.26, 0.2897295567879627)(0.27, 0.3450023479671257)(0.28, 0.406873360401532)(0.29, 0.47541972765311097)(0.3, 0.550597192760102)(0.31, 0.6322249344215706)(0.32, 0.7199703931819678)(0.33, 0.8133340976156558)(0.34, 0.9116345404663899)(0.35, 1.013997633469989)(0.36, 1.1193687948907696)(0.37, 1.226541706399451)(0.38, 1.3341886591429843)(0.39, 1.4408909013756848)(0.4, 1.5451689860900217)(0.41, 1.6455131186473508)(0.42, 1.7404135044092317)(0.43, 1.8283906963681402)(0.44, 1.9080259427780824)(0.45, 1.9779915347863877)(0.46, 2.037081154063482)(0.47, 2.0842402204350563)(0.48, 2.1185962395117395)(0.49, 2.139489150321113)(0.5, 2.1465016729378554)(0.51, 2.1394891503211317)(0.52, 2.11859623951186)(0.53, 2.084240220435121)(0.54, 2.037081154063593)(0.55, 1.9779915347860548)(0.56, 1.9080259427783872)(0.57, 1.8283906963680663)(0.58, 1.740413504409583)(0.59, 1.6455131186473277)(0.6, 1.5451689860903821)(0.61, 1.4408909013759574)(0.62, 1.3341886591429288)(0.63, 1.226541706399176)(0.64, 1.119368794890587)(0.65, 1.0139976334700258)(0.66, 0.9116345404668544)(0.67, 0.8133340976153929)(0.68, 0.7199703931822653)(0.69, 0.6322249344218501)(0.7, 0.5505971927599074)(0.71, 0.47541972765268176)(0.72, 0.4068733604018832)(0.73, 0.3450023479675226)(0.74, 0.28972955678803114)(0.75, 0.24087163659398367)(0.76, 0.19815419422174782)(0.77, 0.16122696743193962)(0.78, 0.12967899872159389)(0.79, 0.10305380914261986)(0.8, 0.08086457211511765)(0.81, 0.06260928724453993)(0.82, 0.04778595413559988)(0.83, 0.03590774621046817)(0.84, 0.02651816453729843)(0.85, 0.019204360194451865)(0.86, 0.013601403545429673)(0.87, 0.009389885774181332)(0.88, 0.006292884748490307)(0.89, 0.004072930256630451)(0.9, 0.0025289692435232496)(0.91, 0.001493331048762266)(0.92, 0.0008286926417351147)(0.93, 0.000425043860960483)(0.94, 0.00019665264861852154)(0.95, 7.903028896351346e-5)(0.96, 2.5896644943300067e-5)(0.97, 6.145395484240691e-6)(0.98, 8.092701510118062e-7)(0.99, 2.52900550273672e-8)(1.0, 0.0)
				};
			\end{axis}
	\end{tikzpicture} }
	\caption{B-splines $B_2$, $B_4$, and $B_6$ over $\T \cong [0,1)$.}
	\label{fig:splines}
\end{figure}
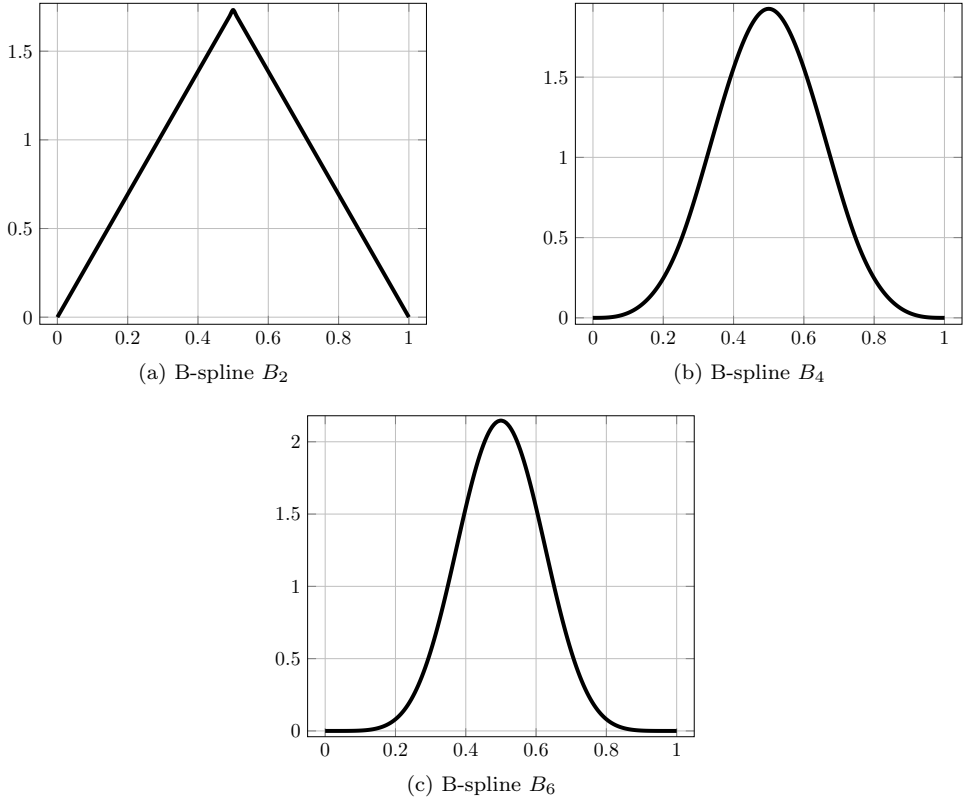

We present numerical results for the method described in \Cref{sec:anova:approxmethod} for a test function $\fun{f}{[0,1)^9}{\R}$, \begin{equation}\label{testfun}
	f(\x) \coloneqq B_2(x_1) B_4(x_5) + B_2(x_2) B_4(x_6) + B_2(x_3) B_4(x_7) + B_2(x_4) B_4(x_8) B_6(x_9) ,
\end{equation}
where $B_2$, $B_4$ and $B_6$ are parts of univariate, shifted, scaled and dilated B-splines of order 2, 4, and 6, respectively, see \Cref{fig:splines} for illustration. Their Fourier series is given by \begin{equation*}
    B_j(x) \coloneqq c_j \sum_{k \in \Z} \mathrm{sinc}^j\left(\frac{\pi\cdot k}{j}\right) \cos( \pi \cdot k ) \,\e^{2\pi\i k \cdot x}
\end{equation*} with $\mathrm{sinc}(x) \coloneqq \sin(x)/x$ and the constants $c_2 \coloneqq \sqrt{3/4}$, $c_4 \coloneqq \sqrt{315/604}$, $c_6 \coloneqq \sqrt{277200/655177}$ such that $\norm{B_j}{\L_2(\T^d)} = 1$. This allows the direct computation of the Fourier coefficients $\fc{\k}{f}$ and the norm $\norm{f}{\L_2(\T^d)}$. The ANOVA terms $f_{\u}$ are only nonzero for $$\u \in U^\ast \coloneqq \mathcal{P}(\{ 1,5 \}) \cup \mathcal{P}(\{ 2,6 \}) \cup \mathcal{P}(\{ 3,7 \})\cup \mathcal{P}(\{ 4,8,9 \}).$$ The function $f$ therefore has an exact low-dimensional structure for $d_s = 3$, i.e., $\mathrm{T}_{3} f = f$. This leads to $d_s = 3$ being the optimal choice for the superposition threshold with no error caused by ANOVA truncation since it corresponds to the superposition dimension $\dsuper$ for $\delta = 1$, see \eqref{eq:specificds}. In an approximation scenario with an unknown function $f$ this information is of course not known.

We consider two errors \begin{equation}
\varepsilon_{\ell_2} = \frac{\norm{\b y - (S_{I(U_{d_s})}^X f (\x))_{\x\in X}}{2}}{\norm{\b y}{2}}, \text{and} \quad \varepsilon_{\mathrm{L}_2} = \frac{\norm{f - S_{I(U_{d_s})}^X f}{\L_2(\T^9)}}{\norm{f}{\L_2(\T^9)}}.
\end{equation} Here, the error $\varepsilon_{\ell_2}$ can be regarded as a training error since it is taken at the given sampling set $X$ and the error $\varepsilon_{\mathrm{L}_2}$ as a type of generalization error since it measures the error in the Fourier coefficients. Since our goal is to find the important ANOVA terms, i.e., the terms in $U^\ast$, we expect to have an interval (or gap) in which to choose the order-dependent threshold $\b\varepsilon \in [0,1]^{d_s}$. Therefore, we define \begin{equation*}
I^{(j)} = \begin{cases}
\emptyset \quad &\colon \text{assumption \eqref{assumption} is not fulfilled} \\
(a^{(j)}, b^{(j)}) &\colon \text{assumption \eqref{assumption} is fulfilled}
\end{cases}
\end{equation*}
with $1 \leq j \leq d_s$ and \begin{align*}
a^{(j)} &\coloneqq \max \left\{ \gsi{\u}{S_{I(U_{d_s})}^X f} \colon \u \in U_{d_s} \setminus U^\ast, \au = j \right\}, \\
b^{(j)} &\coloneqq \min \left\{ \gsi{\u}{S_{I(U_{d_s})}^X f} \colon \u \in U^\ast, \au = j \right\}.
\end{align*} Here, the assumption \eqref{assumption} is to be understood for every order of terms, i.e., for $\u$ and $\v$ with $\au = \av = j$.

\begin{remark}
	The norm occurring in the error $\varepsilon_{\mathrm{L}_2}$ can be calculated using Parseval's identity \begin{equation*}
	\norm{f - S_{I(U_{d_s})}^X f}{\L_2}^2 = \norm{f}{\L_2}^2 + \sum_{\k \in I(U_{d_s})} \abs{ \fc{\k}{f} - \hat{f}_{\k} }^2  - \sum_{\k \in I(U_{d_s})} \abs{ \fc{\k}{f} }^2
	\end{equation*} which is possible since we know the exact Fourier coefficients and the norm of the function $f$. In general, this error cannot be computed.
\end{remark}

\subsection{Scattered Data Approximation}

For our numerical experiments we use one sampling set $X \subset\T^9$ of uniformly distributed nodes with $M \coloneqq \abs{X} = 2.5 \cdot 10^6$, and an evaluation vector $\b y = ( f(\x) )_{\x\in X}$. We are going to start by choosing three as the superposition threshold $d_s$ while later reducing it to two which allows us to see the effect of truncating an ANOVA term. Our primary aim for now is to detect the ANOVA terms in $U^\ast$ which we achieve using the first step of our method, see \cref{sec:approx:active}. To this end, we choose a frequency index set $I(U_{d_s}) \subset \Z^9$, cf.~\eqref{eq:approx:partial}, through order-dependent sets $I_0 = \{0\}$, $I_1 = \{-N_1/2,\dots,N_1/2-1\}$, $I_2 = \{-N_2/2,\dots,N_2/2-1\}^2$, and $I_3 = \{-N_3/2,\dots,N_3/2-1\}^3$ with $N_1,N_2,N_3 \in 2\N$. The method gives us an approximation $S_{I(U_{d_s})}^X f$.

Results of numerical experiments with the function $f$ from \eqref{testfun} and different choices for the bandwidths $N_1, N_2$, and $N_3$ are displayed in \Cref{tab:res_det}. They show that it is indeed possible to detect the ANOVA terms in $U^\ast$ using trigonometric polynomials of small degrees. Moreover, both errors are roughly of the same order. Since our number of samples $M$ is fixed, we are looking for values $\b N$ such that one balances the effects of underfitting and overfitting. The experiments suggest that the choice in examples 5 and 8 is close to optimal. In \Cref{fig:gsi} we depicted the global sensitivity indices $\gsi{\u}{S_{I(U_{d_s})}^X f}$, cf.~\cref{alg}, for example 8 from \Cref{tab:res_det}. The one-dimensional sets $\{i\}$, $i=1,\dots,9$, all have large indices as they are all in $U^\ast$ while the two dimensional sets $$\{1,5\}, \{2,6\}, \{3,7\}, \{4,8\}, \{4,9\}, \{8,9\} \in U^\ast$$ are clearly separated from the two dimensional sets in $U_{d_s}\setminus U^\ast$. The same holds for the one three-dimensional term $\{4,8,9\} \in U^\ast$. The size of the intervals $I^{(j)}$ suitable to choose the parameters $\varepsilon_j$ is especially relevant since it separates \textit{important} from \textit{unimportant} terms. 

\begin{table*}[ht]\centering
	\begin{tabular}{@{}rrrrrrrrrr@{}}\toprule& \multicolumn{2}{c}{size of index sets} & \phantom{a}& \multicolumn{2}{c}{relative errors} & \phantom{a} & \phantom{a}\\\cmidrule{2-3} \cmidrule{5-6} & $\b N$ & $\abs{I(U_{d_s})}$ && $\varepsilon_{\ell_2}$ & $\varepsilon_{\mathrm{L}_2}$  && $I^{(1)}, \, I^{(2)}, \, I^{(3)}$ \\\midrule 
		1 & [256, 32, 8] & 65704 && $ 4.7 \cdot {10}^{-3}$ & $4.8 \cdot {10}^{-3}$ && $(0.0, 0.021 )$ \\
		 &  &  &&  &  && $(3.0 \cdot {10}^{-8}, 0.019 )$ \\
		 &  &  && & && $(1.2 \cdot {10}^{-8}, 0.026 )$ \\ \bottomrule
		2 & [256, 32, 16] & 320392 && $ 2.1 \cdot {10}^{-3}$ & $2.4 \cdot {10}^{-3}$ && $(0.0, 0.021 )$ \\
		 &  &  &&  &  && $(7.2 \cdot {10}^{-9}, 0.019 )$ \\
		 &  &  && & && $(2.5 \cdot {10}^{-8}, 0.026 )$ \\ \bottomrule
		3 & [256, 32, 32] & 2539336 && $ 2.6 \cdot {10}^{-2}$ & $2.8 \cdot {10}^{-2}$ && $(0.0, 0.016 )$ \\
		&  &  &&  &  && $(8.3 \cdot {10}^{-5}, 0.015 )$ \\
		&  &  && & && $(2.5 \cdot {10}^{-3}, 0.023 )$ \\ \bottomrule
		4 & [256, 64, 8] & 173992 && $ 4.4 \cdot {10}^{-3}$ & $4.7 \cdot {10}^{-3}$ && $(0.0, 0.021 )$ \\
		 &  &  &&  &  && $(1.1 \cdot {10}^{-7}, 0.019 )$ \\
		 &  &  && & && $(1.1 \cdot {10}^{-8}, 0.026 )$ \\ \bottomrule
		5 & [256, 64, 16] & 428680 && $ 1.6 \cdot {10}^{-3}$ & $1.9 \cdot {10}^{-3}$ && $(0.0, 0.021 )$ \\
		 &  &  &&  &  && $(1.8 \cdot {10}^{-8}, 0.019 )$ \\
		 &  &  && & && $(1.6 \cdot {10}^{-8}, 0.026 )$ \\ \bottomrule
		6 & [256, 64, 32] & 2647624 && $ 2.5 \cdot {10}^{-2}$ & $3.2 \cdot {10}^{-2}$ && $(0.0, 0.015 )$ \\
		 &  &  &&  &  && $(4.0 \cdot {10}^{-4}, 0.015 )$ \\
		 &  &  && & && $(2.9 \cdot {10}^{-3}, 0.022 )$ \\ \bottomrule
		7 & [512, 64, 8] & 176296 && $ 4.4 \cdot {10}^{-3}$ & $4.7 \cdot {10}^{-3}$ && $(0.0, 0.021 )$ \\
		 &  &  &&  &  && $(1.1 \cdot {10}^{-7}, 0.019 )$ \\
		 &  &  && & && $(1.1 \cdot {10}^{-8}, 0.026 )$ \\ \bottomrule
		8 & [512, 64, 16] & 430984 && $ 1.6 \cdot {10}^{-3}$ & $1.9 \cdot {10}^{-3}$ && $(0.0, 0.021 )$ \\
		 &  &  &&  &  && $(1.8 \cdot {10}^{-8}, 0.019 )$ \\
		 &  &  && & && $(1.6 \cdot {10}^{-8}, 0.026 )$ \\ \bottomrule
		9 & [512, 64, 32] & 2649928 && $ 2.5 \cdot {10}^{-2}$ & $3.2 \cdot {10}^{-2}$ && $(0.0, 0.015 )$ \\
		 &  &  &&  &  && $(4.0 \cdot {10}^{-4}, 0.015 )$ \\
		 &  &  && & && $(2.9 \cdot {10}^{-3}, 0.022 )$ \\ \bottomrule
	\end{tabular}
	\caption{Results of detection step for important ANOVA terms with $M = 2.5 \cdot 10^6$ uniformly distributed nodes ($\b N = [N_1,N_2,N_3]$).}
	\label{tab:res_det}
\end{table*}

\begin{figure}[ht!]
	\centering
	\subfloat[1d sensitivity indices]{ \begin{tikzpicture}
		\begin{axis}[clip=false, enlargelimits=false,ymode = log, xmin=0.8,xmax=9.2,ymin=1.85e-2,ymax=0.145,height=0.33\textwidth, width=0.43\textwidth, grid=major, xlabel={ANOVA term $\u$}, ylabel={$\gsi{\b{u}_i}{S_{I(U_{d_s})}^{X} f}$},
		legend style={at={(0.5,1.07)}, anchor=south,legend columns=3,legend cell align=left, font=\small, 
		},]
		\addplot[orange,mark=o,mark size=1] coordinates {
			(1, 0.13485590547067322) (2, 0.1348546191375092) (3, 0.1348538986781466) (4, 0.08479925199524384) (5, 0.05705736651807279) (6, 0.048995887099158836) (7, 0.0489958331178174) (8, 0.04899466500463767) (9, 0.020729792280798152) 
		};
		\end{axis}
		\end{tikzpicture} }
		\hfill 
		\subfloat[2d sensitivity indices]{ \begin{tikzpicture}
			\begin{axis}[clip=false, enlargelimits=false,ymode = log, xmin=0.2,xmax=37.2,ymin=1e-8,ymax=0.15,height=0.33\textwidth, width=0.43\textwidth, grid=major, xlabel={ANOVA term $\u$}, ylabel={$\gsi{\b{u}_i}{S_{I(U_{d_s})}^{X} f}$},
			legend style={at={(0.5,1.07)}, anchor=south,legend columns=3,legend cell align=left, font=\small, 
			},]
			\addplot[cyan,mark=o,mark size=1] coordinates {
				(1, 0.07780131659625403) (2, 0.04495099140872069) (3, 0.044950575872343745) (4, 0.04494983891678797) (5, 0.028265903218229544) (6, 0.019018986643685766) (7, 1.7828408967506963e-8) (8, 1.7639235059540238e-8) (9, 1.753033672139588e-8) (10, 1.7509397974474565e-8) (11, 1.7507769622850096e-8) (12, 1.7449770371942553e-8) (13, 1.7403659880631815e-8) (14, 1.733987266977155e-8) (15, 1.733601510416636e-8) (16, 1.7265892101823815e-8) (17, 1.7246212135785187e-8) (18, 1.722877931198043e-8) (19, 1.7193818854818427e-8) (20, 1.7165417106311266e-8) (21, 1.7154092722010225e-8) (22, 1.7147041508873725e-8) (23, 1.7137614109025423e-8) (24, 1.7135074581718182e-8) (25, 1.7086232174521464e-8) (26, 1.705219816506131e-8) (27, 1.7023378050871612e-8) (28, 1.694772280744182e-8) (29, 1.689329990485209e-8) (30, 1.6875238820966038e-8) (31, 1.683418685825038e-8) (32, 1.681973566786908e-8) (33, 1.6725646959476134e-8) (34, 1.662394021302858e-8) (35, 1.638422297263428e-8) (36, 1.5990670530065094e-8) 
			};
			\end{axis}
		\end{tikzpicture} } \\ \centering \subfloat[3d sensitivity indices]{ \begin{tikzpicture}
					\begin{axis}[clip=false, enlargelimits=false,ymode = log, xmin=1e-32,xmax=86,ymin=1e-8,ymax=0.0425,height=0.33\textwidth, width=0.43\textwidth, grid=major, xlabel={ANOVA term $\u$}, ylabel={$\gsi{\b{u}_i}{S_{I(U_{d_s})}^{X} f}$},
					legend style={at={(0.5,1.07)}, anchor=south,legend columns=3,legend cell align=left, font=\small, 
					},]
					\addplot[red,mark=o,mark size=0.8] coordinates {
								(1, 0.025923436849895076) (2, 1.5636814887748134e-8) (3, 1.558107839211133e-8) (4, 1.5480025651304596e-8) (5, 1.5465493974679868e-8) (6, 1.5453397971798796e-8) (7, 1.5451313438918488e-8) (8, 1.5271801006642355e-8) (9, 1.5247069465951782e-8) (10, 1.5182298763809362e-8) (11, 1.5148193138965472e-8) (12, 1.5137924399351542e-8) (13, 1.5124650776543285e-8) (14, 1.5117131441947184e-8) (15, 1.5074741510945442e-8) (16, 1.5068698960667126e-8) (17, 1.5054858425082342e-8) (18, 1.5051536628690458e-8) (19, 1.5023424089303886e-8) (20, 1.5002598185329892e-8) (21, 1.4982342466932743e-8) (22, 1.497781365817611e-8) (23, 1.496904113659431e-8) (24, 1.495921059689189e-8) (25, 1.4947773872235423e-8) (26, 1.493329209812126e-8) (27, 1.4930594045393455e-8) (28, 1.4920379073640693e-8) (29, 1.4903193824956635e-8) (30, 1.4896502497880414e-8) (31, 1.4891253059682746e-8) (32, 1.4850414150613423e-8) (33, 1.4828179165846257e-8) (34, 1.4804223671623508e-8) (35, 1.4760210630713028e-8) (36, 1.4714919938846229e-8) (37, 1.4703848815528526e-8) (38, 1.4703327069782456e-8) (39, 1.4685218540187085e-8) (40, 1.4657971459276087e-8) (41, 1.4657259097788491e-8) (42, 1.464863999039876e-8) (43, 1.4647531122299409e-8) (44, 1.4647347002925791e-8) (45, 1.4629718588642276e-8) (46, 1.4601491043610848e-8) (47, 1.4600113746504334e-8) (48, 1.459537477070138e-8) (49, 1.4593718478164756e-8) (50, 1.4576223379491161e-8) (51, 1.4538205552061024e-8) (52, 1.4522648520685689e-8) (53, 1.4516573899884193e-8) (54, 1.450158174768511e-8) (55, 1.447982112783596e-8) (56, 1.44687478536574e-8) (57, 1.4442029047165095e-8) (58, 1.4421212974093971e-8) (59, 1.4415809530158984e-8) (60, 1.4394865971449321e-8) (61, 1.4386567571464522e-8) (62, 1.4385709603856557e-8) (63, 1.4381149561484287e-8) (64, 1.4363754569460072e-8) (65, 1.4337066554370508e-8) (66, 1.4322049987350518e-8) (67, 1.4282614613329381e-8) (68, 1.427421152021412e-8) (69, 1.4258434646786866e-8) (70, 1.4255342911994694e-8) (71, 1.4241827453895658e-8) (72, 1.4221117672232602e-8) (73, 1.4216656983774674e-8) (74, 1.4208659551181513e-8) (75, 1.4207075513391394e-8) (76, 1.4198279363719234e-8) (77, 1.4166787981457814e-8) (78, 1.4154199966976589e-8) (79, 1.4067387735682094e-8) (80, 1.399643820825167e-8) (81, 1.3950645751339707e-8) (82, 1.3866474238262689e-8) (83, 1.3832030693735835e-8) (84, 1.3650986242283137e-8)
							};
					\end{axis}
				\end{tikzpicture} }
	\caption{Behavior of the global sensitivity indices $\gsi{\b u}{S_{I(U_{d_s})}^{X} f}$ for the example 8 from Table \ref{tab:res_det}.} 
	\label{fig:gsi}
\end{figure}
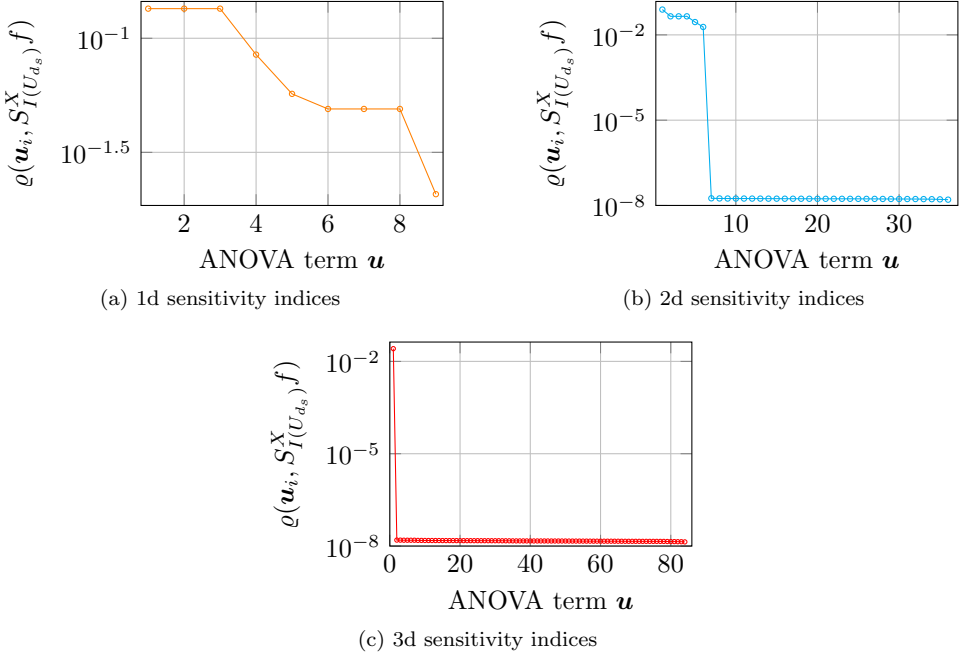

Since there exists $\b N$, and $\b\varepsilon$ such that we are able to recover the set of ANOVA terms $U^\ast$, we set $U_{X,\b y}^{(\b\varepsilon)} = U^\ast$ from now on. We aim to improve our approximation quality with the given data by solving the minimization problem \eqref{eq:approx:last}. Here, we could choose individual index sets for every ANOVA term in $U^\ast$ to form $I(U^\ast)$ based on the global sensitivity indices, but for our function order-dependence can be maintained. \Cref{tab:res_approx} shows the results of the approximation using the index set $I(U^\ast)$. 

\begin{table*}[ht]\centering
	\begin{tabular}{@{}rrrrrr@{}}\toprule& \multicolumn{2}{c}{size of index sets} & \phantom{a}& \multicolumn{2}{c}{relative errors} \\\cmidrule{2-3} \cmidrule{5-6} & $\b N$ & $\abs{I(U^\ast)}$ && $\varepsilon_{\ell_2}$ & $\varepsilon_{\mathrm{L}_2}$ \\\midrule 
		1 & [1024, 64, 64]  & 283069 && $ 5.6 \cdot {10}^{-4}$ & $ 6.3 \cdot {10}^{-4}$ \\
		2 & [1024, 128, 32]  & 135773  && $6.0 \cdot {10}^{-4}$ & $6.4  \cdot {10}^{-4}$ \\
		4 & [1024, 128, 64]  & 356029 && $2.7 \cdot {10}^{-4}$ & $3.1 \cdot {10}^{-4}$ \\
		5 & [1024, 256, 64] & 649405 && $2.0 \cdot {10}^{-4}$ & $2.7 \cdot {10}^{-4}$ \\ \bottomrule
	\end{tabular}
	\caption{Results for approximation with active set $U^\ast$ and $M = 2.5 \cdot 10^6$ uniformly distributed nodes ($\b N = [N_1,N_2,N_3]$).}
	\label{tab:res_approx}
\end{table*}

The number of terms in $U^\ast$ is significantly smaller than in $U_{d_s}$ such that we are able to increase $\b N$ while balancing the effects of over- and underfitting. We observe that the reduction of the ANOVA terms to $U^\ast$ yields benefit with regard to approximation quality due to the reduction in model complexity.

Now that we have experiments with no truncation error in the ANOVA decomposition, i.e., $\mathrm{T}_3 f = f$, we repeat the tests with a superposition threshold $d_s = 2$. In this case, it is not possible to detect the ANOVA term $f_{\{4,8,9\}}$ which results in the set $U^+ \coloneqq U^\ast \setminus \{4,8,9\}$ being optimal for the detection step. For the following tests, we use the same nodes as we did previously.

\begin{table*}[ht]\centering
	\begin{tabular}{@{}rrrrrrrrrr@{}}\toprule& \multicolumn{2}{c}{size of index sets} & \phantom{a}& \multicolumn{2}{c}{relative errors} & \phantom{a} & \phantom{a}\\\cmidrule{2-3} \cmidrule{5-6} & $\b N$ & $\abs{I(U_{d_s})}$ && $\varepsilon_{\ell_2}$ & $\varepsilon_{\mathrm{L}_2}$  && $I^{(1)}, \, I^{(2)}$  \\\midrule 
		1 & [256, 16] & 10396 && $9.4 \cdot {10}^{-2}$ & $9.4 \cdot {10}^{-2}$ && $(0.0,  0.021 )$ \\
		  &  &  && & && $(3.0 \cdot {10}^{-6},  0.020 )$ \\ \bottomrule
		2 & [256, 32] & 36892 && $9.3 \cdot {10}^{-2}$ & $9.4 \cdot {10}^{-2}$ && $( 0.0, 0.021 )$ \\
		  &  &  && & && $(3.0 \cdot {10}^{-6},  0.020 )$ \\ \bottomrule
		3 & [256, 64] & 145180 && $9.1 \cdot {10}^{-2}$ & $9.6 \cdot {10}^{-2}$ && $( 0.0,  0.021 )$ \\
		  &  &  && & && $(4.8 \cdot {10}^{-5},  0.020 )$ \\ \bottomrule
		4 & [256, 128] & 582940 && $8.2 \cdot {10}^{-2}$ & $1.1 \cdot {10}^{-1}$ && $( 0.0,  0.021 )$ \\
		  &  &  && & && $(2.3 \cdot {10}^{-4},  0.020 )$ \\ \bottomrule
	\end{tabular}
	\caption{Results of detection step for important ANOVA terms with $M = 2.5 \cdot 10^6$ uniformly distributed nodes and superposition threshold $d_s = 2$, ($\b N = [N_1,N_2]$).}
	\label{tab:res_det:ds2}
\end{table*}

The results of the experiments in \cref{tab:res_det:ds2} show that it is possible to determine the terms in $U^+$. Since three-dimensional terms are not included, the term $f_{\{4,8,9\}}$ is not in the approximation which results in the larger errors compared to \cref{tab:res_det}.

Since there exists $\b N \in \N^2$ and $\epsilon > 0$ such that $U_{X,\b y}^{(\b\varepsilon)} = U^+$, we use $U^+$ for the next approximation step with suitable index sets $I(U^+)$. The results for different choices of $N_1$ and $N_2$ are displayed in \cref{tab:res_det:ds2:res}. We are able to achieve better errors with the smaller index sets. Obviously, the influence of the cutoff error is dominating such that a large benefit in taking many additional frequencies cannot be observed.

\begin{table*}[ht]\centering
	\begin{tabular}{@{}rrrrrrrrrr@{}}\toprule& \multicolumn{2}{c}{size of index sets} & \phantom{a}& \multicolumn{2}{c}{relative errors} \\\cmidrule{2-3} \cmidrule{5-6} & $\b N$ & $\abs{I(U^+)}$ && $\varepsilon_{\ell_2}$ & $\varepsilon_{\mathrm{L}_2}$   \\\midrule 
		1 & [1024, 16] & 10558 && $9.3 \cdot {10}^{-2}$ & $9.3 \cdot {10}^{-2}$  \\
		2 & [1024, 32] & 14974 && $9.3 \cdot {10}^{-2}$ & $9.3 \cdot {10}^{-2}$  \\
		3 & [1024, 64] & 33022 && $9.3 \cdot {10}^{-2}$ & $9.4 \cdot {10}^{-2}$  \\
		4 & [1024, 128] & 105982 && $9.1 \cdot {10}^{-2}$ & $9.5 \cdot {10}^{-2}$  \\\bottomrule
	\end{tabular}
	\caption{Approximation results for active set $U^+$ with $M = 2.5 \cdot 10^6$ uniformly distributed nodes ($\b N = [N_1,N_2]$).}
	\label{tab:res_det:ds2:res}
\end{table*}

\subsection{Black-Box Approximation}

In the following numerical experiments we aim to find reconstructing rank-1 lattice, see \cref{sec:pre:lattice}, for the function $f$. In the first step, our goal is to determine the set of ANOVA terms $U^\ast$ and later use it to improve our approximation quality. As discussed in \cite{KaPoVo17}, the function $f$ works well with hyperbolic cross index sets of dominating mixed smoothness $3/2$. Therefore, we define \begin{equation}
	\mathcal{H}_j^{N} = \left\{ \k\in\Z^j \colon \prod_{s \in \supp \k} (1+\abs{k_s})^{\frac{3}{2}} \leq N \right\}, N \in \N.
\end{equation}
We choose as order-dependent index sets $I_0 = \{0\}$, $I_1 = \mathcal{H}_1^{N_1}$, $I_2 = \mathcal{H}_2^{N_2}$, and $I_3 = \mathcal{H}_3^{N_1}$ with $N_1,N_2,N_3 \in\N$ to obtain $I(U_{d_s})$ as in \eqref{eq:IU}. The method then gives us a reconstructing rank-1 lattice $X \coloneqq \Lambda(\b z, M, I(U_{d_s}))$ with generating vector $\b z \in \Z^9$ and lattice sizes $M \in \N$ by employing the component-by-component construction from \cite[Algorithm 8.17]{PlPoStTa18}. The approximation is defined as $S_{I(U_{d_s})}^X f$.

\Cref{tab:res:bb:3:full} shows results of numerical experiments with $f$, see \eqref{testfun}, and different choices for the parameters $N_1, N_2$, and $N_3$. We can see that there exist an $\b\varepsilon$ such that it is possible to detect the active set of terms $U^\ast$ in every test scenario. The lattice size increases with the growing index set as expected. Note that is sufficient to use an index set of 3481 frequencies and a lattice with only $46351$ evaluations in order to detect the active set of ANOVA terms.

Now, we set the active set $U_{X,\b y}^{(\b\varepsilon)} = U^\ast$. The aim is again to improve our approximation quality by solving the minimization problem \eqref{eq:approx:last}. We also maintain the order-dependence of the set $I(U^\ast)$ based on the structure of the function. \Cref{tab:res:bb:3:active} shows the results of the approximation using the index set $I(U^\ast)$. Larger cutoff parameters $N_i$ become possible such that we are able to achieve a good approximation error with relatively small lattice sizes in relation to our problem dimension. The sizes of our reconstructing lattices stay manageable as well.

\begin{table*}[ht]\centering
	\begin{tabular}{@{}rrrrrrrrrr@{}}\toprule \multicolumn{2}{c}{size of index sets} &  \multicolumn{2}{c}{relative errors} & \phantom{a} & \phantom{a}\\\cmidrule{1-4}   $\b N$ & $\abs{I(U_{d_s})}$ & $\varepsilon_{\ell_2}$ & $\varepsilon_{\mathrm{L}_2}$  &$M$& $I^{(1)}, \, I^{(2)}, \, I^{(3)}$  \\\midrule 
		 $[10^2, 10^2, 10^2]$ & 3481 & $2.8 \cdot {10}^{-2}$ & $3.0 \cdot {10}^{-2}$ & 47351 & $( 0.0, 0.021 )$ \\
		   &  &  & &  & $( 3.4 \cdot {10}^{-5}, 0.019 )$ \\
		   &  &  & &  & $( 5.7 \cdot {10}^{-5}, 0.025 )$ \\\bottomrule
		 $[10^3, 10^3, 10^3]$ & 11203 & $1.0 \cdot {10}^{-2}$ & $1.0 \cdot {10}^{-2}$ & 490277 & $( 0.0, 0.021 )$ \\
		  &  &  & &  & $( 1.7 \cdot {10}^{-7}, 0.019 )$ \\
		   &  &  & &  & $( 5.7 \cdot {10}^{-7}, 0.026 )$ \\\bottomrule
		 $[10^4, 10^4, 10^3]$ & 16891 & $7.0 \cdot {10}^{-3}$ & $7.1 \cdot {10}^{-3}$ & 1114489 & $( 0.0, 0.021 )$ \\
		  &  &  & &  & $( 3.5 \cdot {10}^{-10}, 0.019 )$ \\
		  &  &  & &  & $( 5.3 \cdot {10}^{-8}, 0.026 )$ \\\bottomrule
	 	$[10^5, 10^4, 10^3]$ & 17341 & $7.0 \cdot {10}^{-3}$ & $7.0 \cdot {10}^{-3}$ & 2349307 & $( 0.0, 0.021 )$ \\
		   &  &  & &  & $( 1.7 \cdot {10}^{-9}, 0.019 )$ \\
		   &  &  & &  & $( 2.8 \cdot {10}^{-9}, 0.026 )$ \\\bottomrule
	\end{tabular}
	\caption{Results of detection step for important ANOVA terms ($\b N = [N_1,N_2,N_3]$).}
	\label{tab:res:bb:3:full}
\end{table*}

\begin{table*}[ht]\centering
	\begin{tabular}{@{}rrrrrrrrrr@{}}\toprule \multicolumn{2}{c}{size of index sets} &  \multicolumn{2}{c}{relative errors} & \phantom{a} \\\cmidrule{1-4}   $\b N$ & $\abs{I(U_{d_s})}$ & $\varepsilon_{\ell_2}$ & $\varepsilon_{\mathrm{L}_2}$  &$M$\\\midrule 
		 $[10^4, 10^4, 10^4]$ & 2243 & $2.4 \cdot {10}^{-3}$ & $2.6 \cdot {10}^{-3}$ & 157243  \\
		 $[10^5, 10^5, 10^5]$ & 6565 & $8.3 \cdot {10}^{-4}$ & $8.3 \cdot {10}^{-4}$ & 1346881  \\
		 $[10^6, 10^5, 10^5]$ & 7591 & $7.7 \cdot {10}^{-4}$ & $7.7 \cdot {10}^{-4}$ & 883391 \\
	 	$[10^6, 10^6, 10^5]$ & 13495 & $5.0 \cdot {10}^{-4}$ & $5.0 \cdot {10}^{-4}$ & 5691109  \\ \bottomrule
	\end{tabular}
	\caption{Results of detection step for important ANOVA terms ($\b N = [N_1,N_2,N_3]$).}
	\label{tab:res:bb:3:active}
\end{table*}

\section{Summary}

In this paper we considered the classical ANOVA decomposition for periodic functions. We studied different index sets $\Pud$ and $\Fud$ for the projections $\mathrm{P}_{\u} f$ and ANOVA terms $f_{\u}$, respectively, and proved their properties as well as formulas for the Fourier coefficients. For functions in Sobolev type spaces $\st(\T^d)$ and the weighted Wiener algebra $\aw(\T^d)$ we showed that a function inherits its smoothness to both the projections and ANOVA terms.

Moreover, we related the smoothness of a function characterized by the decay of its Fourier coefficients to the class of functions of a low-dimensional structure and considered relative errors for $\mathrm{L}_\infty$ and $\mathrm{L}_2$ weighed by the corresponding Sobolev and Wiener algebra norms. This lead to an upper bound for the modified superposition dimension $\dsuper$ in those spaces. For product and order-dependent weights $w^{\alpha,\beta}$ we were able to obtain specific bounds.

We introduced an approximation method for high-dimensional functions that are of a low-dimensional structure in \Cref{sec:anova:approxmethod}. The method can be employed in black-box and scattered data approximation. In the former scenario one needs a special discretization for the index sets of type $I(U)$, e.g., rank-1 lattice, and in the latter an algorithm to realize an efficient multiplication with the Fourier matrices. We proved results for the error of the method, see \cref{sec:anova:error}, in $\Lt$ and $\mathrm{L}_\infty$. An $\L_\infty$ bound in the scattered data case for the aliasing error in \eqref{eq:approx_error} is still open. Here, one needs to consider estimating the quantity \begin{equation*}
\sup_{\norm{f}{\aw(\T^d)} \leq 1} \frac{1}{\abs{X}}\sum_{\x\in X} \abs{\left(f-S_{I(U)} f\right)(\x)}.
\end{equation*}

Numerical experiments with a benchmark function were successfully performed in \cref{sec:numeric}. The active set detection works well for this function in both approximation scenarios and even for small degrees of trigonometric polynomials. A definite goal is to perform experiments on real-world data sets and try to determine attribute rankings.

Moreover, it is possible to consider a similar analysis of the ANOVA decomposition in weighted Lebesgue spaces with orthogonal polynomials as bases, e.g., the Chebyshev system. This would also allow a generalization of the approximation method to a non-periodic setting, cf.\ \cite{PoSc19b}.

\section*{Acknowledgments}
First of all, we thank the referees for their valuable comments and suggestions. The authors also thank Lutz K\"ammerer, Toni Volkmer, and Tino Ullrich for fruitful discussions and remarks on the contents of the paper. Moreover, we thank Tino Ullrich for comments on \cref{thm:error:approx} and \cref{lem:error:l2est} which improved an earlier bound. DP acknowledges funding by Deutsche Forschungsgemeinschaft (German Research Foundation) -- Project--ID 416228727 -- SFB 1410. MS is supported by the BMBF grant 01$|$S20053A.

\bibliographystyle{siamplain} 
\bibliography{references.bib}

\begin{thebibliography}{10}

\bibitem{BaGn14}
{\sc J.~Baldeaux and M.~Gnewuch}, {\em Optimal randomized multilevel algorithms
  for infinite-dimensional integration on function spaces with {ANOVA}-type
  decomposition}, {SIAM} J. Numer. Anal., 52 (2014), pp.~1128--1155,
  \url{https://doi.org/10.1137/120896001}.

\bibitem{BaGr03}
{\sc R.~F. Bass and K.~Gr{{\"o}}chenig}, {\em Random sampling of multivariate
  trigonometric polynomials}, \textrm{SIAM} J. Math. Anal., 36 (2004),
  pp.~773--795, \url{https://doi.org/10.1137/S0036141003432316}.

\bibitem{Bj96}
{\sc {\AA}.~Bj{\"o}rck}, {\em Numerical Methods for Least Squares Problems},
  \textrm{SIAM}, Philadelphia, PA, USA, 1996.

\bibitem{BuGr04}
{\sc H.-J. Bungartz and M.~Griebel}, {\em Sparse grids}, Acta Numer., 13
  (2004), pp.~147--269, \url{https://doi.org/10.1017/S0962492904000182}.

\bibitem{ByKaUlVo16}
{\sc G.~{B}yrenheid, L.~{K}\"{a}mmerer, T.~{U}llrich, and T.~{V}olkmer}, {\em
  Tight error bounds for rank-1 lattice sampling in spaces of hybrid mixed
  smoothness}, Numer. Math., 136 (2017), pp.~993--1034,
  \url{https://doi.org/10.1007/s00211-016-0861-7}.

\bibitem{CaMoOw97}
{\sc R.~Caflisch, W.~Morokoff, and A.~Owen}, {\em Valuation of mortgage-backed
  securities using {B}rownian bridges to reduce effective dimension}, J.
  Comput. Finance, 1 (1997), pp.~27--46,
  \url{https://doi.org/10.21314/jcf.1997.005}.

\bibitem{DuTeUl16}
{\sc D.~{D{\~u}ng}, V.~N. {Temlyakov}, and T.~{Ullrich}}, {\em Hyperbolic Cross
  Approximation}, Advanced Courses in Mathematics -- CRM Barcelona,
  Birkh\"auser, Cham, 2018.

\bibitem{JoKuSl13}
{\sc J.~Dick, F.~Y. Kuo, and I.~H. Sloan}, {\em High-dimensional integration:
  The quasi-{M}onte {C}arlo way}, Acta Numer., 22 (2013), pp.~133--288,
  \url{https://doi.org/10.1017/S0962492913000044}.

\bibitem{DiSlWaWo06}
{\sc J.~Dick, I.~H. Sloan, X.~Wang, and H.~Wo{\'{z}}niakowski}, {\em Good
  lattice rules in weighted {K}orobov spaces with general weights}, Numer.
  Math., 103 (2006), pp.~63--97,
  \url{https://doi.org/10.1007/s00211-005-0674-6}.

\bibitem{DoKuPo09}
{\sc M.~D{\"o}hler, S.~Kunis, and D.~Potts}, {\em Nonequispaced hyperbolic
  cross fast {F}ourier transform}, {\textrm{SIAM}} J. Numer. Anal., 47 (2010),
  pp.~4415 -- 4428, \url{https://doi.org/10.1137/090754947}.

\bibitem{GiKuNuWa18}
{\sc A.~D. Gilbert, F.~Y. Kuo, D.~Nuyens, and G.~W. Wasilkowski}, {\em
  Efficient implementations of the multivariate decomposition method for
  approximating infinite-variate integrals}, {SIAM} J. Sci. Comput., 40 (2018),
  pp.~A3240--A3266, \url{https://doi.org/10.1137/17m1161890}.

\bibitem{GnHeHiRi17}
{\sc M.~Gnewuch, M.~Hefter, A.~Hinrichs, and K.~Ritter}, {\em Embeddings of
  weighted {H}ilbert spaces and applications to multivariate and
  infinite-dimensional integration}, J. Approx. Theory, 222 (2017), pp.~8--39,
  \url{https://doi.org/10.1016/j.jat.2017.05.003}.

\bibitem{GrKuNi14}
{\sc I.~G. Graham, F.~Y. Kuo, J.~A. Nichols, R.~Scheichl, C.~Schwab, and I.~H.
  Sloan}, {\em Quasi-{M}onte {C}arlo finite element methods for elliptic {PDEs}
  with lognormal random coefficients}, Numer. Math., 131 (2014), pp.~329--368,
  \url{https://doi.org/10.1007/s00211-014-0689-y}.

\bibitem{GrKuNu18}
{\sc I.~G. Graham, F.~Y. Kuo, D.~Nuyens, R.~Scheichl, and I.~H. Sloan}, {\em
  Circulant embedding with {QMC}: analysis for elliptic {PDE} with lognormal
  coefficients}, Numer. Math., 140 (2018), pp.~479--511,
  \url{https://doi.org/10.1007/s00211-018-0968-0}.

\bibitem{Gr05}
{\sc M.~Griebel}, {\em Sparse grids and related approximation schemes for
  higher dimensional problems}, in Foundations of Computational Mathematics
  (FoCM05), Santander, L.~Pardo, A.~Pinkus, E.~Suli, and M.~Todd, eds.,
  Cambridge University Press, 2006, pp.~106--161.

\bibitem{GriHa13}
{\sc M.~Griebel and J.~Hamaekers}, {\em Fast discrete {F}ourier transform on
  generalized sparse grids}, in Sparse Grids and Applications - Munich 2012,
  J.~Garcke and D.~Pfl\"uger, eds., vol.~97 of Lect. Notes Comput. Sci. Eng.,
  Springer International Publishing, 2014, pp.~75--107,
  \url{https://doi.org/10.1007/978-3-319-04537-5_4}.

\bibitem{GrHo10}
{\sc M.~Griebel and M.~Holtz}, {\em {D}imension-wise integration of
  high-dimensional functions with applications to finance}, J. Complexity, 26
  (2010), pp.~455--489, \url{https://doi.org/10.1016/j.jco.2010.06.001}.

\bibitem{GrKuSl10}
{\sc M.~Griebel, F.~Y. Kuo, and I.~H. Sloan}, {\em The smoothing effect of the
  {ANOVA} decomposition}, J. Complexity, 26 (2010), pp.~523--551,
  \url{https://doi.org/10.1016/j.jco.2010.04.003}.

\bibitem{GrKuSl16}
{\sc M.~Griebel, F.~Y. Kuo, and I.~H. Sloan}, {\em The {ANOVA} decomposition of
  a non-smooth function of infinitely many variables can have every term
  smooth}, Math. Comp., 86 (2016), pp.~1855--1876,
  \url{https://doi.org/10.1090/mcom/3171}.

\bibitem{GrPoRa07}
{\sc K.~Groechenig, B.~M. Poetscher, and H.~Rauhut}, {\em Learning
  trigonometric polynomials from random samples and exponential inequalities
  for eigenvalues of random matrices}, Tech. Report math.PR/0701781, 2007,
  \url{http://cds.cern.ch/record/1013574}.

\bibitem{He03}
{\sc M.~Hegland}, {\em Adaptive sparse grids}, in Proc. of 10th Computational
  Techniques and Applications Conference CTAC-2001, K.~Burrage and R.~B. Sidje,
  eds., vol.~44, 2003, pp.~C335--C353,
  \url{https://doi.org/10.21914/anziamj.v44i0.685}.

\bibitem{HeLe11}
{\sc M.~Hegland and P.~Leopardi}, {\em The rate of convergence of sparse grid
  quadrature on the torus}, ANZIAM Journal, 52 (2011),
  \url{https://doi.org/10.21914/anziamj.v52i0.3952}.

\bibitem{Holtz11}
{\sc M.~Holtz}, {\em Sparse grid quadrature in high dimensions with
  applications in finance and insurance}, vol.~77 of Lecture Notes in
  Computational Science and Engineering, Springer-Verlag, Berlin, 2011,
  \url{https://doi.org/10.1007/978-3-642-16004-2}.

\bibitem{Kae2012}
{\sc L.~K{\"a}mmerer}, {\em Reconstructing hyperbolic cross trigonometric
  polynomials by sampling along rank-1 lattices}, SIAM J. Numer. Anal., 51
  (2013), pp.~2773--2796, \url{http://dx.doi.org/10.1137/120871183}.

\bibitem{Kae2013}
{\sc L.~K{\"a}mmerer}, {\em Reconstructing multivariate trigonometric
  polynomials from samples along rank-1 lattices}, in Approximation Theory XIV:
  San Antonio 2013, G.~E. Fasshauer and L.~L. Schumaker, eds., Springer
  International Publishing, 2014, pp.~255--271,
  \url{https://doi.org/10.1007/978-3-319-06404-8_14}.

\bibitem{KaPoVo13}
{\sc L.~K\"ammerer, D.~Potts, and T.~Volkmer}, {\em Approximation of
  multivariate periodic functions by trigonometric polynomials based on rank-1
  lattice sampling}, J. Complexity, 31 (2015), pp.~543--576,
  \url{https://doi.org/10.1016/j.jco.2015.02.004}.

\bibitem{KaPoVo14}
{\sc L.~K\"ammerer, D.~Potts, and T.~Volkmer}, {\em {Approximation of
  multivariate periodic functions by trigonometric polynomials based on
  sampling along rank-1 lattice with generating vector of Korobov form}}, J.
  Complexity, 31 (2015), pp.~424--456,
  \url{https://doi.org/10.1016/j.jco.2014.09.001}.

\bibitem{KaPoVo17}
{\sc L.~K\"{a}mmerer, D.~Potts, and T.~Volkmer}, {\em High-dimensional sparse
  {FFT} based on sampling along multiple rank-1 lattices}, Appl. Comput.
  Harmon. Anal., 51 (2021), pp.~225--257,
  \url{https://doi.org/10.1016/j.acha.2020.11.002}.

\bibitem{KaeUlVo19}
{\sc L.~{K{\"a}mmerer}, T.~{Ullrich}, and T.~{Volkmer}}, {\em {Worst-case
  recovery guarantees for least squares approximation using random samples}},
  ArXiv e-prints 1911.10111,  (2019).

\bibitem{KaVo19}
{\sc L.~K{\"a}mmerer and T.~Volkmer}, {\em Approximation of multivariate
  periodic functions based on sampling along multiple rank-1 lattices}, J.
  Approx. Theory, 246 (2019), pp.~1--27,
  \url{https://doi.org/10.1016/j.jat.2019.05.001}.

\bibitem{KeKuPo09}
{\sc J.~Keiner, S.~Kunis, and D.~Potts}, {\em Using {NFFT3} - a software
  library for various nonequispaced fast {Fourier} transforms}, {ACM} Trans.
  Math. Software, 36 (2009), pp.~Article 19, 1--30,
  \url{https://doi.org/10.1145/1555386.1555388}.

\bibitem{KuMaUl16}
{\sc T.~K\"{u}hn, S.~Mayer, and T.~Ullrich}, {\em Counting via entropy: New
  preasymptotics for the approximation numbers of {S}obolev embeddings}, {SIAM}
  J. Numer. Anal., 54 (2016), pp.~3625--3647,
  \url{https://doi.org/10.1137/16m106580x}.

\bibitem{KuNu16}
{\sc F.~Y. Kuo and D.~Nuyens}, {\em Application of quasi-{M}onte {C}arlo
  methods to elliptic {PDEs} with random diffusion coefficients: A survey of
  analysis and implementation}, Found. Comput. Math, 16 (2016), pp.~1631--1696,
  \url{https://doi.org/10.1007/s10208-016-9329-5}.

\bibitem{KuNuPlSlWa17}
{\sc F.~Y. Kuo, D.~Nuyens, L.~Plaskota, I.~H. Sloan, and G.~W. Wasilkowski},
  {\em {I}nfinite-dimensional integration and the multivariate decomposition
  method}, J. Comput. Appl. Math., 326 (2017), pp.~217--234,
  \url{https://doi.org/10.1016/j.cam.2017.05.031}.

\bibitem{KuSchwSl12}
{\sc F.~Y. Kuo, C.~Schwab, and I.~H. Sloan}, {\em Quasi-{M}onte {C}arlo finite
  element methods for a class of elliptic partial differential equations with
  random coefficients}, SIAM J. Numer. Anal., 50 (2012), pp.~3351 -- 3374,
  \url{https://doi.org/10.1137/110845537}.

\bibitem{KuSlWaWo09}
{\sc F.~Y. Kuo, I.~H. Sloan, G.~W. Wasilkowski, and H.~Wo{\'{z}}niakowski},
  {\em On decompositions of multivariate functions}, Math. Comp., 79 (2010),
  pp.~953--966, \url{https://doi.org/10.1090/s0025-5718-09-02319-9}.

\bibitem{LiHi03}
{\sc D.~Li and F.~J. Hickernell}, {\em Trigonometric spectral collocation
  methods on lattices}, in Recent Advances in Scientific Computing and Partial
  Differential Equations, S.~Y. Cheng, C.-W. Shu, and T.~Tang, eds., vol.~330
  of Contemp. Math., AMS, 2003, pp.~121--132,
  \url{https://doi.org/10.1090/conm/330/05887}.

\bibitem{LiOw06}
{\sc R.~Liu and A.~B. Owen}, {\em Estimating mean dimensionality of analysis of
  variance decompositions}, J. Amer. Statist. Assoc., 101 (2006), pp.~712--721,
  \url{https://doi.org/10.1198/016214505000001410}.

\bibitem{MoeUl20}
{\sc M.~{Moeller} and T.~{Ullrich}}, {\em {$L_2$-norm sampling discretization
  and recovery of functions from RKHS with finite trace}}, ArXiv e-prints
  2009.11940,  (2020).

\bibitem{Ni92}
{\sc H.~Niederreiter}, {\em {Random Number Generation and Quasi-{M}onte {C}arlo
  Methods}}, CBMS-NSF Regional Conference Series in Applied Mathematics,
  Society for Industrial and Applied Mathematics, 1992,
  \url{https://doi.org/10.1137/1.9781611970081}.

\bibitem{Owen2019}
{\sc A.~Owen}, {\em Effective dimension of some weighted pre-{S}obolev spaces
  with dominating mixed partial derivatives}, {SIAM} J. Numer. Anal., 57
  (2019), pp.~547--562, \url{https://doi.org/10.1137/17m1158975}.

\bibitem{PaSa82}
{\sc C.~C. Paige and M.~A. Saunders}, {\em {LSQR}: {A}n algorithm for sparse
  linear equations and sparse least squares}, ACM Trans. Math. Software, 8
  (1982), pp.~43--71, \url{https://doi.org/10.1145/355984.355989}.

\bibitem{PlPoStTa18}
{\sc G.~Plonka, D.~Potts, G.~Steidl, and M.~Tasche}, {\em Numerical Fourier
  Analysis}, Applied and Numerical Harmonic Analysis, Birkh\"auser, 2018,
  \url{https://doi.org/10.1007/978-3-030-04306-3}.

\bibitem{PoSc19b}
{\sc D.~Potts and M.~Schmischke}, {\em Learning multivariate functions with
  low-dimensional structures using polynomial bases}, ArXiv e-prints
  1912.03195,  (2019).

\bibitem{RaAl99}
{\sc H.~Rabitz and O.~F.~Alis}, {\em General foundations of high dimensional
  model representations}, J. Math. Chem., 25 (1999), pp.~197--233,
  \url{https://doi.org/10.1023/A:1019188517934}.

\bibitem{SlWo01}
{\sc I.~H. Sloan and H.~Wo\'zniakowski}, {\em Tractability of multivariate
  integration for weighted {K}orobov classes}, J. Complexity, 17 (2001),
  pp.~697--721, \url{https://doi.org/10.1006/jcom.2001.0599}.

\bibitem{So90}
{\sc I.~M. Sobol}, {\em {O}n sensitivity estimation for nonlinear mathematical
  models}, Keldysh AppliedMathematics Institute, 1 (1990), pp.~112--118.

\bibitem{So01}
{\sc I.~M. Sobol}, {\em Global sensitivity indices for nonlinear mathematical
  models and their {M}onte {C}arlo estimates}, Math. Comput. Simulation, 55
  (2001), pp.~271--280, \url{https://doi.org/10.1016/s0378-4754(00)00270-6}.

\end{thebibliography}
\end{document}